\tikzstyle{box}=[fill=white, draw=black, shape=rectangle, tikzit shape=rectangle, tikzit fill=white, tikzit draw=black]
\tikzstyle{round box}=[fill=white, draw=black, shape=circle, tikzit fill=white, tikzit draw=black, tikzit shape=circle]
\newtheorem{theo}{Theorem}[section]
\newtheorem{theorem}[theo]{Theorem}
\newtheorem{prop}[theo]{Proposition}
\newtheorem{lemma}[theo]{Lemma}
\newtheorem{claim}[theo]{Claim}
\theoremstyle{definition}
\newtheorem{fact}[theo]{Fact}
\newtheorem{definition}[theo]{Definition}
\newtheorem{notation}[theo]{Notation}
\newtheorem{remark}[theo]{Remark}
\newtheorem{ex}[theo]{Example}
\newtheorem{example}[theo]{Example}
\newtheorem{contre-ex}[theo]{Counter-example}
\newtheorem{construction}[theo]{Construction}
\newcommand{\CC}{\mathcal{C}}
\DeclareMathOperator{\GG}{\mathcal{G}}
\DeclareMathOperator{\Dd}{\mathbb{D}}
\DeclareMathOperator{\Cc}{\mathbb{C}}
\newcommand{\lens}[4]{\left(\begin{array}{c}#1 \\#2  \\\end{array}\right) \leftrightarrows \left(\begin{array}{c}#3 \\#4  \\\end{array}\right)}
\renewcommand{\phi}{\varphi}
\title{Nondeterministic behaviors in double categorical systems theory}
\author{Paul $\mathrm{Wang}$}
\date{}
\begin{document}

	\setcounter{tocdepth}{2}
	\maketitle

	

	\begin{abstract} 
		In this paper, we build double theories capturing the idea of nondeterministic behaviors and trajectories. Following Libkind and Myers' Double Operadic Theory of Systems  \cite{Libkind_Myers_2025}, we construct monoidal semi double categories of interfaces, along with what we call semimodules of systems, in the case of Moore machines, working with Markov categories with conditionals to handle nondeterminism. We use conditional products in these Markov categories to define trajectories in a compositional way, and represent nondeterministic systems using Markov maps; channels between systems are assumed to be deterministic. 
	\end{abstract}

\section{Introduction}
\emph{Compositional modelling} is based on the principle that one can, to some extent, represent and understand systems by first decomposing them into subsystems, then aggregating knowledge on the subsystems and their interactions. It is expected to be a useful foundation for \emph{collaborative modelling}; this principle underpins the projects ModelCollab \cite{ModelCollab} and CatColab \cite{CatColab}. Another use of compositionality lies in the \emph{modular design} of systems, building them by combining simpler components. A potential (still hypothetical at this stage) application of these practices is the training and deployment of safer AI systems \cite{SafeguardedAIThesis}.

One crucial observation is that there are many distinct notions for "systems": automata (deterministic or otherwise), (Partially Observable) Markov decision processes, Petri nets, systems driven by ODEs, PDEs, or SDEs, etc. 
Each such notion gives rise to a \textbf{systems theory}, which can be thought of as an ontology for systems.

The branch of applied category theory called \emph{categorical systems theory} aims at giving unified frameworks, based on categorical algebra, to describe \emph{composition operations} on systems, as well as \emph{comparisons maps}, or \emph{generalized simulations} between systems, and \emph{analyse the behaviors} of systems in terms of their components. Viewing systems theories as mathematical objects in their own right, one can then consider comparisons morphisms between them. For instance, as far as nondeterminism is concerned, probabilistic settings can often be "coarse-grained" into possibilistic ones, by forgetting the precise probabilities and only keeping track of the supports of the probability distributions involved.

As a matter of fact, there is a substantial body of work in categorical systems theory, for a more in-depth overview of these, we refer the reader to \cite[Section 1]{Libkind_Myers_2025}.
Many of these frameworks focus on one aspect of systems theory:

\begin{itemize}
	\item For instance, the \emph{symmetric monoidal} point of view, where systems are represented as morphisms $s: I \rightarrow O$ in symmetric monoidal categories, possibly with extra structure, where $I$ denotes the input object, and $O$ the output one; these systems can compose sequentially and in parallel, thanks to the symmetric monoidal structure. This common theme has been developed with many variations; see e.g. \cite{Broadbent_Karvonen_2023}, \cite{Lavore_Felice_Román_2025}, \cite{FRITZ-MarkovCats}, \cite{Coecke_Fritz_Spekkens_2016}. 
	
	\item There is also the \emph{operadic} point of view, which allows more involved composition patterns than just sequential and parallel ones, and also does not require interfaces of systems to be cleanly divided between inputs and outputs; see e.g. \cite{Libkind_Baas_Patterson_Fairbanks_2022}.
	
	\item On the other hand, various paradigms focus more on the notion of \emph{maps between systems}; a very prominent one is based on the idea of viewing systems as \emph{coalgebras} for endofunctors (see for instance \cite{Rutten_2000}, \cite{Kurz}). Then, morphisms of coalgebras represent \emph{simulations}, i.e. comparisons that preserve external/observable behavior.
\end{itemize}

Note that these are not disjoint: there are indeed works, such as \cite{Katis_Sabadini_Walters_1997} and \cite{Baez_Foley_Moeller_Pollard_2020}, that define structures combining composition of systems with maps between systems. However, many of these rely on $2$-categorical, or bicategorical, notions, which means that only systems maps with identical interfaces are considered.

One of the latest developments in this area is based on \emph{double category theory}. A key idea, as demonstrated by Myers \cite{DCST-book}, and then Libkind and Myers \cite{Libkind_Myers_2025}, is that \emph{wiring systems together} and \emph{comparing systems} are distinct operations, each of which ought to be encoded in a specific category. However, these operations are not independent: some compatibility conditions naturally appear. Also, one can be interested in systems maps that are not identities on the interfaces; this motivates the use of \emph{double} categories, instead of $2$-categories\footnote{Recall that, in $2$-categories, the $2$-cells can only exist between $1$-cells of the same signature; the notion of double category does not include this restriction.}. The general framework developed in \cite{Libkind_Myers_2025} is thus called the \emph{Double Operadic Theory of Systems}.

An open question is adapting the Double Operadic Theory of Systems to nondeterministic settings: although \cite{DCST-book} and \cite{Libkind_Myers_2025} contain examples of nondeterministic systems theories, there are issues with the notions of trajectories/system maps in those. See the discussion in the last three paragraphs of \cite[Section 3.5]{DCST-book}. Our goal is to give an answer to this question. There is a substantial body of work on probabilistic or nondeterministic  systems, for instance in the probabilistic programming semantics literature (see e.g. \cite{Liell-Cock_Staton_2025}, \cite{HigherOrderQBS}, \cite{Lavore_Felice_Román_2025}), but these results do not use the double categorical point of view mentioned above, and as such do not provide a complete answer to our question.

In this paper, we construct theories of nondeterministic systems (see Definitions \ref{defi_modules_systems} and \ref{defi_semi_modules_systems}) that fit into a double categorical framework, for a notion of systems based on Moore machines (see Definition \ref{defi_moore}). We describe the goals in \ref{subsection_goal}, and give a high-level description of our contributions in \ref{subsection_contrib}.

\subsection{Goal: double theories of nondeterministic systems and behaviors}\label{subsection_goal}
We wish to construct double theories of nondeterministic systems (see Definition \ref{defi_modules_systems}), where one direction (called the $y$ direction in this paper) is used to wire systems together and/or with lenses, and the other (the $x$ direction) is used to represent morphisms of systems, i.e. generalized simulations, such as trajectories, or coarse-graining maps, between state spaces or interfaces. Our key requirements are the following:

\begin{enumerate}
	\item Trajectories of systems should generalize the notion of stochastic process. In particular, we want them to encode \emph{joint distributions on states at various instants}, not merely distributions on states for each individual instant. See Subsection \ref{motiv_trajectories}.
	\item Some form of nondeterminism in the update maps of systems should be allowed.
	\item We want compositional trajectories for composite systems, that can be computed from (compatible) trajectories of the subsystems. In particular, \emph{we would rather keep track of all the sources of randomness and manage joint distributions explicitly}, than use a global probabilistic universe as a black box\footnote{This design choice is subjective, and against mainstream conventions in probability theory; we believe it is justified by the goal: to develop fully compositional modelling, one should not store the information on joint distributions inside a "black box sample space"}.
\end{enumerate}

\subsection{Contributions}\label{subsection_contrib}

We use the framework of \emph{Markov category with conditionals} (see Definition \ref{defi_Markov_cat}) to handle nondeterminism. If $\CC$ is a Markov category with conditionals, and $\GG$ a directed (acyclic) graph, we define a \emph{semi}module of systems (see Definition \ref{defi_semi_modules_systems}), in the spirit of modules of systems in \cite{Libkind_Myers_2025}, denoted $T^{\mathrm{Moore}}(\CC, \GG)$. Here, nondeterminism is handled by $\CC$, and time is represented via $\GG$. To that end, we first find \emph{semi} triple categories $Arena^{\mathrm{Moore}}_{\CC} \subseteq ArenaSys^{\mathrm{Moore}}_{\CC}$ (Theorems \ref{theo_Arena_C} and \ref{theo_ArenaSys_C}), where the extra dimension is meant for time-restriction, then define a \emph{semi}double category\footnote{See Definitions \ref{defi_semi_double_cat} and \ref{defi_semi_triple_cat} for semi triple and semi double categories.} $ArenaSys^{\mathrm{Moore}}_{\CC}(\GG)$ of interfaces and systems, for each $\GG$. Our main result can be stated as Theorem \ref{theo_main}, and its proof amounts to Construction \ref{constr_Arena_C^G}. Let us now explain how we meet each of the requirements:

\begin{enumerate}
	\item We think of trajectories as families of Markov morphisms that are compatible with deterministic time-restriction maps, i.e. compatible families of finite marginals. In our semi triple categories $Arena^{\mathrm{Moore}}_{\CC} \subseteq ArenaSys^{\mathrm{Moore}}_{\CC}$, the extra dimension/direction (called $z$) is used to accomodate the time-restriction, i.e. marginalization, maps. Then, our semi double category of systems and interfaces (Theorem \ref{theo_main}) is essentially a functor category $[\GG; ArenaSys^{\mathrm{Moore}}_{\CC}]$.
	
	\item Considering Moore machines, we allow arbitrary Markov maps to represent nondeterministic updates \emph{for systems, but not for lenses/channels between interfaces}: the latter are assumed to be deterministic. This is a technical restriction: it ensures associativity of vertical composition of squares in the resulting double category.

	\item Using conditional products, i.e. conditional independence assumptions, we \emph{create joint distributions} for trajectories of composite systems or lenses. See the definitions of $y$-composition of $xy$-squares in $Arena^{\mathrm{Moore}}_{\CC}$ and $ArenaSys^{\mathrm{Moore}}_{\CC}$ (points \ref{enum_xy_squares_ArenaC} and \ref{enum_xy_squares_ArenaSysC} in Constructions \ref{constr_Arena_C} and \ref{constr_ArenaSys_C} respectively), and point \ref{discussion_cond_indep} in the discussion in Section \ref{section_discussion}.
	
	\item To ensure that $xy$-interchange holds, we use copy-composition for $x$-composition of $x$-morphisms and $xy$-squares. The drawback is the lack of identities for $x$-composition, which is why we get \emph{semi} triple categories and \emph{semi} double categories.
	
\end{enumerate}

%
%
%

\subsection{Motivating example}\label{motiv_trajectories}

	In the deterministic case, the sequences of states, inputs, and outputs, is all one needs to know. In contrast, with nondeterminism, the \emph{joint distributions} contain more information than the sequences of distributions: for instance, the event ``the state of the system is never the same twice in a row'' is a property of pairs. In probabilistic settings, one cannot assign a probability to said event based only on the distributions on individual states. This phenomenon is not specific to probability theory: it appears in many nondeterministic settings, such as possibilism, imprecise probability, etc. Let's assume that nondeterminism is represented by a Markov category $\CC$ (see Definition \ref{defi_Markov_cat}). Then, distributions on pairs of states can be viewed as Markov morphisms $1 \rightarrow S \otimes S$, if $S$ is the states object, and $1$ the monoidal unit, usually a singleton set in the examples. In this context, our observation can be expressed more abstractly as follows: the function $\CC(1, S \otimes S) \rightarrow \CC(1, S) \times \CC(1, S)$ is usually not an injection.

	One way of addressing this issue is by working with \emph{distributions on trajectories}, such as elements of $\CC(1, S^{ \mathbb{N}})$, assuming an object $S^{ \mathbb{N}}$ exists. In technical terms, such an object is the (infinite) Kolmogorov product \cite[Definitions 3.1 and 4.1]{Fritz2020infiniteproducts} of the constant sequence $(S_i)_{i \in \mathbb{N}} = (S)_{i \in \mathbb{N}}$. There are technical issues to address with this approach, yet it can be made to work (see for instance \cite{HigherOrderQBS} and \cite{LazyPPL}). One approximation of that idea, which we shall follow here, is working with elements in $\CC(1, S^{\otimes n}) $ for all $n$, i.e. considering the projective limit of the $\CC(1, S^{\otimes n}) $. In favourable contexts (namely, when the Kolmogorov product does exist), this projective limit is actually isomorphic to $\CC(1, S^{\mathbb{N}})$. Even if it does not hold, one can hope for a canonical section $\lim\limits_n \, \CC(1, S^{\otimes n})  \rightarrow \CC(1, S^{\mathbb{N}})$ of the projection $\CC(1, S^{\mathbb{N}}) \rightarrow \lim\limits_n \, \CC(1, S^{\otimes n}) $. For a more general notion of time, one can consider a Directed Acyclic Graph $\GG$. This is what we shall do in this paper.

	Let us now describe what the definitions should yield, for Moore machines, in the case where $\GG$ is the graph of integers $(\mathbb{N}, (n \rightarrow n+1, \, n \in \mathbb{N})^{})$, and $\CC = BorelStoch$ is the Markov category of standard Borel spaces and measurable Markov kernels. The main point is that trajectories, i.e. behaviors whose source is the clock system, should take into account nondeterminism of states, inputs and outputs, for sequences of instants $(0, \ldots, n)$ for all $n$, not just for each instant individually.

	\begin{itemize}
		\item A system ${\begin{pmatrix} S \\ S  \\ \end{pmatrix}} \leftrightarrows {\begin{pmatrix} I \\ O  \\ \end{pmatrix}}$ should be a compatible family of lenses given by measurable functions $expose^n: S(n) \rightarrow O(n)$ and measurable probability kernels $update^n: S(n)  \times I(n+1) \rightarrow S(n+1)$ between standard Borel spaces. There should be (measurable) restriction functions $S(n+1) \rightarrow S(n)$, $I(n+1) \rightarrow I(n)$, and $O(n+1) \rightarrow O(n)$, compatible with the functions $expose^n$. Here, the objects $S(n)$, resp. $I(n)$, resp. $O(n)$, are standard Borel spaces meant to contain the information on states, resp. inputs, resp. outputs, from time $0$ to time $n$, and the restriction maps correspond to forgetting information on the last instant(s).

		For instance, we could have, $S(n) = S^{ n+1}$, $I(n+1) = I^{ n+1}$ and $O(n) = O^{ n+1}$, for all $n \geq 0$, with $S$, $I$, $O$ standard Borel spaces, and the system could represent an open Markov process: the map $expose^n$ is the ${n+1}^{th}$ cartesian power of a map $expose: S \rightarrow O$, and the map $update^n: S^{ n+1}  \times I^{ n+1} \rightarrow S^{ n+2}$ is given by $update^n(s_0, \ldots, s_{n}, i_0, \ldots, i_n) = (s_0, \ldots, s_{n}, update(s_{n}, i_n)) \in S^{n+2}$, where $update: S \times I \rightarrow S$ is the $1$-step update map of the open Markov process. 

		\item The system $Time: (*, {\begin{pmatrix} * \\ *  \\ \end{pmatrix}} \leftrightarrows {\begin{pmatrix} * \\ *  \\ \end{pmatrix}})$, which is just a clock, is the constant trivial lens: the $expose^n$ and $update^n$ functions are the unique map from the singleton set $*$ to itself. Note that it is different from the usual clock in discrete-time deterministic systems, since time is hardcoded in our theory.

		\item Then, a trajectory of a system ${\begin{pmatrix} S \\ S  \\ \end{pmatrix}} \leftrightarrows {\begin{pmatrix} I \\ O  \\ \end{pmatrix}}$, i.e. a morphism of systems whose source is the clock, can be represented as a square: 
		
		\begin{center}
			\begin{tikzcd}
				{\begin{pmatrix} * \\ *  \\ \end{pmatrix}}  \arrow[d, "", swap, shift right] \arrow[r, "", shift left]\arrow[r, shift right] & {\begin{pmatrix} S \\ S  \\ \end{pmatrix}}   \arrow[d, shift right]
				\\
				{\begin{pmatrix} * \\ *  \\ \end{pmatrix}} \arrow[u, shift right]\arrow[r, "", shift left] \arrow[r, shift right] & {\begin{pmatrix} I \\ O  \\ \end{pmatrix}}\arrow[u, "", shift right, swap]
			\end{tikzcd}
		\end{center}
		It should consist in Markov maps/probability distributions $p^n: * \rightarrow O(n) \times I(n+1)$, $\phi^n : * \rightarrow S(n) $, and $s^n: * \rightarrow S( n) \times  I(n+1)$, for all $n \in \GG$, that are compatible with $expose^n$ and $update^n$.
		
				We also want naturality in $n$, i.e. compatibility with the restriction morphisms.

				Informally, the map $p^n$ yields a joint distributions on outputs at times $(0, 1, \ldots, n)$ and inputs at times $(0, 1, \ldots, n, n+1)$, for all $n$. Similarly, the map $\phi^n : * \rightarrow S(n)$ encodes the joint distribution of the states at times $(0, 1, \ldots, n)$. Finally, the map $s^n$ \emph{should} yield \emph{the} joint distribution of the states at times $(0, \ldots, n)$ and of the inputs at times $(0, 1, \ldots, n+1)$. See point \ref{discussion_too_lax} in Section \ref{section_discussion} below.

			\end{itemize}

\section{Some background on categorical notions of systems}\label{section_classical}

In this section, we describe some of the main established categorical frameworks used to reason about systems, deterministic or otherwise. Double categorical systems theory can be viewed as trying to find a common generalization of these. 

\subsection{Symmetric Monoidal Categories}

The "symmetric monoidal" point of view on systems stems from the assumption that systems can be composed sequentially and in parallel, with natural compatibility conditions. Over the years, many applications of this idea have been found, in abstract cryptography \cite{Broadbent_Karvonen_2023}, quantum computing \cite{Danos_Kashefi_Panangaden_2007} \cite{Coecke_Kissinger_Gogioso_Dündar-Coecke_Puca_Yeh_Waseem_Pothos_Pfaendler_Wang-Mascianica_et_al._2025}, and of course probabilistic programming semantics \cite{HigherOrderQBS} \cite{LazyPPL}, etc.

For the remainder of this section, we fix a symmetric monoidal category $\CC$, whose symmetries are denoted by $\sigma_{A, B}: A \otimes B \rightarrow B \otimes A$.

\subsubsection{Moore machines}

\begin{definition}\label{defi_moore}
    \begin{enumerate}
	\item A Moore machine in $\CC$ is given by a triple of objects $(S, I, O)$ in $\CC$, and morphisms $S \rightarrow O$ and $S \otimes I \rightarrow S$. We say that $S$, $I$, $O$ are the states, inputs, and outputs objects, respectively.
	\item A lens in $\CC$ is given by a quadruple of objects $(I, O, I', O')$, and morphisms $O \rightarrow O'$ and $O \otimes I' \rightarrow I$.
\end{enumerate}
\end{definition}

Note that, with the definitions given above, Moore machines are special cases of lenses. 
We wish to view lenses $(I, O, I', O',  f: O \rightarrow O', f^{\sharp}: O \otimes I' \rightarrow I)$ as morphisms $(I, O) \rightarrow (I', O')$, which we denote $\lens{I}{O}{I'}{O'}$. To compose them, we need some extra structure on $\CC$, namely \emph{copying maps} $copy_X: X \rightarrow X \otimes X$: 

	Assume given, for each object $X \in \CC$, a map $copy_X: X \rightarrow X \otimes X$. Assume that copying is (co)associative and (co)commutative, i.e. $copy_X; (X \otimes copy_X) = copy_X \otimes (copy_X \otimes X)$ and $copy_X; \sigma_{X, X} = copy_X$ for all $X$.

\begin{definition}\label{def_lens_composition}
	Considering only lenses $(I, O, I', O', f, f^{\sharp})$ such that\footnote{The condition acn be thought of as determinism of the map $f$.} we have $f; copy_{O'} = copy_O ; (f \otimes f)$, we define the following composition:

	Given $(f, f^{\sharp}): \lens{I}{O}{I'}{O'}$ and $(g, g^{\sharp}): \lens{I'}{O'}{I''}{O''}$, we define the composite $(h, h^{\sharp}): \lens{I}{O}{I''}{O''}$, where $h= f; g$, and $h^{\sharp}$ is the following composite: $O \otimes I'' \xrightarrow{(copy_O; O \otimes f)\otimes I''} O \otimes O' \otimes  I'' \xrightarrow{O \otimes g^{\sharp}} O \otimes  I' \xrightarrow{f^{\sharp}} I$.
\end{definition}

We leave the proofs of the following Facts to the reader; they amount to computing in $\CC$.

\begin{fact}
	The composition operation of Definition \ref{def_lens_composition} is associative.
\end{fact}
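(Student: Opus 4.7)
The statement has two independent ingredients: associativity of the forward maps and associativity of the backward maps. The forward component of the composite of three lenses is just $f ; g ; h$ in $\CC$, whose associativity is immediate, so all the work lies on the backward side. My plan is a direct diagram chase, with the determinism condition $f ; copy_{O'} = copy_O ; (f \otimes f)$ playing the crucial role.

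First I would expand both backward components as an explicit composite of morphisms in $\CC$. Starting from $O \otimes I'''$, the composite $\bigl((h \circ g) \circ f\bigr)^{\sharp}$ unfolds as
\[
O \otimes I''' \xrightarrow{copy_O \otimes I'''} O \otimes O \otimes I''' \xrightarrow{O \otimes f \otimes I'''} O \otimes O' \otimes I''' \xrightarrow{O \otimes copy_{O'} \otimes I'''} O \otimes O' \otimes O' \otimes I'''
\]
then $O \otimes O' \otimes g \otimes I'''$ into $O \otimes O' \otimes O'' \otimes I'''$, then $O \otimes O' \otimes h^{\sharp}$, $O \otimes g^{\sharp}$, $f^{\sharp}$. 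On the other hand $\bigl(h \circ (g \circ f)\bigr)^{\sharp}$ unfolds as $copy_O \otimes I'''$, $O \otimes f \otimes I'''$, $O \otimes g \otimes I'''$, $O \otimes h^{\sharp}$ (landing in $O \otimes I''$), then $copy_O \otimes I''$, $O \otimes f \otimes I''$, $O \otimes g^{\sharp}$, $f^{\sharp}$. Writing both sides out in this explicit form turns the question into a concrete equality between two morphisms $O \otimes I''' \to I$.

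Next I would apply the determinism hypothesis on $f$ to the block $O \otimes f \otimes I''' \, ; \, O \otimes copy_{O'} \otimes I'''$ in the first expansion, rewriting it as $O \otimes copy_O \otimes I''' \, ; \, O \otimes f \otimes f \otimes I'''$. Combined with coassociativity of $copy_O$, which allows me to interpret the two nested copies as a single triple copy $O \to O \otimes O \otimes O$, this lets me rearrange $\bigl((h \circ g) \circ f\bigr)^{\sharp}$ into the form: copy $O$ three times, apply $f$ to two of the three copies, then apply $g$ to one of those $O'$ factors to get $O''$, $h^{\sharp}$ to collapse with $I'''$ into $I''$, $g^{\sharp}$ on the remaining $O'$ and $I''$, and finally $f^{\sharp}$. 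The expansion of $\bigl(h \circ (g \circ f)\bigr)^{\sharp}$ can be reorganised, again via coassociativity and (symmetric monoidal) bifunctoriality of $\otimes$ to swap independent tensor factors, into precisely the same shape: the only difference between the two expansions before rewriting is whether one copies $O'$ after applying $f$ or copies $O$ and applies $f$ twice, and these are identified by the determinism condition.

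The main obstacle is purely bookkeeping: the many tensor factors and the interleaving of $copy$, $f$, $g$, $h$, $f^{\sharp}$, $g^{\sharp}$, $h^{\sharp}$ make the manipulations fiddly if carried out purely symbolically. I would therefore present the argument as a string-diagram calculation, where coassociativity of $copy$ and the determinism condition both become visually obvious local rewrites and the equality of the two sides reduces to observing that the resulting string diagrams coincide up to planar isotopy. A last small check is that the composite $h \circ g$ and $g \circ f$ still satisfy the determinism condition so the iterated composites are again lenses of the restricted class; this follows from the same identity $f ; copy_{O'} = copy_O ; (f \otimes f)$ applied twice, together with naturality of $copy$.
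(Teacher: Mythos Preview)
Your argument is correct and is exactly the direct computation the paper has in mind when it says these facts ``amount to computing in $\CC$'': the paper does not spell out a proof, and your string-diagram chase using determinism of $f$, coassociativity of $copy$, and bifunctoriality of $\otimes$ is the standard verification. One small terminological slip: in your closing remark you should not appeal to ``naturality of $copy$'' (which fails in general), but simply to the fact that if $f$ and $g$ each commute with $copy$ then so does $f;g$, which is immediate from the determinism identity applied twice.
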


\begin{fact}
	Assume given, for each object $X$ in $\CC$, a morphism $del_X: X \rightarrow 1$, which makes $(X, del_X, copy_X)$ a commutative comonoid in $\CC$. Assume that $del$ is \emph{natural}, i.e. $f; del_Y = del_X$ for all $X$, $Y$, $f: X \rightarrow Y$.
	
	Then, the composition operation of Definition \ref{def_lens_composition} defines a category, i.e. admits identities. Moreover, this category has a natural symmetric monoidal structure.
\end{fact}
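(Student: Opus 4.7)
The plan is to split the proof into two parts: constructing the identities (and verifying the unit laws), then lifting the monoidal structure from $\CC$.

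For identities, the candidate identity lens on $(I, O)$ is $(id_O, \pi_I^O): \lens{I}{O}{I}{O}$, where $\pi_I^O: O \otimes I \to I$ is $(del_O \otimes I)$ post-composed with the canonical isomorphism $1 \otimes I \simeq I$. I would first check that this is a valid lens by confirming $id_O$ is deterministic, which is immediate: $id_O ; copy_O = copy_O = copy_O; (id_O \otimes id_O)$. Then the unit laws for forward maps are trivial ($f ; id = f$ and $id ; g = g$). The nontrivial verification is the unit laws for the backward maps. For the right unit law (composing with identity after $(g, g^{\sharp}): \lens{I}{O}{I'}{O'}$), I would unfold the backward composite and simplify using the left counit axiom $copy_O; (del_O \otimes O) = id_O$ (up to unitor), which collapses the $(copy_O \otimes I'); (del_O \otimes O \otimes I')$ segment to the identity. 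For the left unit law (composing identity before $(f, f^{\sharp})$), the key step is using naturality of $del$: in the composite, $f$ is immediately followed by $del_{O'}$, which simplifies to $del_O$; then the right counit axiom $copy_O; (O \otimes del_O) = id_O$ finishes the collapse back to $f^{\sharp}$.

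For the symmetric monoidal structure, I would set $(I, O) \otimes (I', O') := (I \otimes I', O \otimes O')$ on objects. On morphisms, given $(f, f^{\sharp}): \lens{I_1}{O_1}{I_1'}{O_1'}$ and $(g, g^{\sharp}): \lens{I_2}{O_2}{I_2'}{O_2'}$, the forward part is $f \otimes g$, and the backward part is obtained by pre-composing $f^{\sharp} \otimes g^{\sharp}$ with the canonical reshuffling symmetry isomorphism $(O_1 \otimes O_2) \otimes (I_1' \otimes I_2') \simeq (O_1 \otimes I_1') \otimes (O_2 \otimes I_2')$. To show this is well-defined inside the category of deterministic lenses, I would verify that $f \otimes g$ is deterministic whenever $f$ and $g$ are, which follows from the induced comonoid structure on a tensor product: $copy_{A \otimes B} = (copy_A \otimes copy_B); (A \otimes \sigma_{A,B} \otimes B)$, combined with naturality of the symmetry and the determinism of $f$ and $g$ separately. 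The unit object is $(1, 1)$, and the associator, unitors, and symmetries for lenses are lifted from those of $\CC$ (seen as lenses with trivial, i.e. $del$-based, backward components, analogous to the identity construction).

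The main obstacle I anticipate is the symmetric monoidal bifunctoriality check, i.e.\ showing that the tensor of two composites equals the composite of two tensors, specifically for the backward maps. The backward part of composition interleaves $copy$, forward maps, and backward maps, and taking two such composites in parallel requires repeatedly applying the symmetric monoidal interchange law and the naturality of the braiding to migrate the $copy_{O_1 \otimes O_2}$ (expanded as above) through the pattern until the two parallel processes factorize. The coherence conditions (pentagon, hexagon, triangle) for the symmetric monoidal structure on lenses should follow routinely once bifunctoriality is in place, since on the forward direction they are inherited directly from $\CC$, and on the backward direction they reduce to coherence diagrams built out of $\CC$'s symmetries and the comonoid counits.
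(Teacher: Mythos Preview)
Your proposal is correct and follows the standard approach; the paper itself does not give a proof of this Fact, explicitly leaving it to the reader as a computation in $\CC$. So there is nothing to compare against beyond noting that your outline is exactly the kind of verification the paper has in mind.

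One small correction: you have the two unit-law arguments swapped. For the \emph{right} unit law (post-composing $(g, g^{\sharp}): \lens{I}{O}{I'}{O'}$ with the identity on $(I',O')$), the backward composite contains the segment $copy_O; (O \otimes g); (O \otimes del_{O'})$, which simplifies via naturality of $del$ (i.e.\ $g; del_{O'} = del_O$) and then the \emph{right} counit $copy_O; (O \otimes del_O) = id_O$. For the \emph{left} unit law (pre-composing with the identity on $(I,O)$), the forward map of the identity is $id_O$, so no naturality of $del$ is needed: after sliding $del_O$ past $g^{\sharp}$ using functoriality of $\otimes$, you get $(copy_O \otimes I'); (del_O \otimes O \otimes I'); g^{\sharp}$, which collapses by the \emph{left} counit. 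This is a cosmetic mix-up and does not affect the validity of the argument.
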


\subsubsection{Mealy machines}\label{subsubsection_Mealy}
Here, we describe a more general notion of systems, namely \emph{Mealy machines}. These can be defined without needing copy or discard maps. Recall that we have fixed a Symmetric Monoidal Category $\CC$.

\begin{definition}
	\begin{itemize}
		\item 	Given objects $X$, $Y$, a Mealy machine $X \xrightarrow{} Y$ is given by an object $S$, along with a morphism $f: S \otimes X \rightarrow S \otimes Y$. We may denote it $f: X \xrightarrow{S} Y$. 
		\item  Mealy machines compose sequentially: let $f: X \xrightarrow{S} Y$ and $g: Y \xrightarrow{T} Z$. Then, the composite $(f; g): X \xrightarrow{S \otimes T} Z$ is given by the following composite map: $S \otimes T \otimes X \xrightarrow{\sigma \otimes X; T \otimes f} T \otimes S \otimes Y \xrightarrow{\sigma \otimes Y; S \otimes g} S \otimes T \otimes Z$.
	\end{itemize}
\end{definition}

Given a Mealy machine $f: X \xrightarrow{S} Y$, we think of the objects $S$, $X$, $Y$, as representing the spaces of states, inputs, and outputs, respectively.

\begin{fact}
	There is a symmetric monoidal category $\mathrm{Mealy}(\CC)$ of Mealy machines in $\CC$, with a symmetric monoidal faithful functor $\CC \rightarrow  \mathrm{Mealy}(\CC)$, acting as identity on objects, and mapping $f: X \rightarrow Y$ in $\CC$ to $f: X \xrightarrow{1_{\CC}} Y$ in $\mathrm{Mealy}(\CC)$.
\end{fact}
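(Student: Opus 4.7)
The plan is to first define the underlying category of $\mathrm{Mealy}(\CC)$, then upgrade it to a symmetric monoidal category using the tensor of $\CC$, and finally analyze the functor $F: \CC \to \mathrm{Mealy}(\CC)$. To obtain strict associativity and unitality despite the associators $(S \otimes T) \otimes U \cong S \otimes (T \otimes U)$ and unitors $1_{\CC} \otimes S \cong S$ of $\CC$, I would take morphisms $X \to Y$ in $\mathrm{Mealy}(\CC)$ to be equivalence classes of pairs $(S, f: S \otimes X \to S \otimes Y)$ modulo state-space isomorphism: $(S, f) \sim (S', f')$ iff there is an iso $\alpha: S \to S'$ with $(\alpha \otimes Y) \circ f = f' \circ (\alpha \otimes X)$.

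With this setup, identities are $\mathrm{id}_X: X \xrightarrow{1_{\CC}} X$, the identity on $1_{\CC} \otimes X$. Associativity of the specified composition reduces to checking that composing three Mealy machines in either order yields underlying morphisms agreeing up to the associator of $\CC$ on the state factor, which is absorbed into the chosen equivalence; the verification amounts to unpacking the definition of the composite and using naturality of $\sigma$ together with the pentagon. Unitality follows from naturality of the unitors $\lambda$, $\rho$.

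For the symmetric monoidal structure, I would inherit the tensor of objects from $\CC$ and define the tensor of Mealy machines $f: X_1 \xrightarrow{S_1} Y_1$ and $g: X_2 \xrightarrow{S_2} Y_2$ as $X_1 \otimes X_2 \xrightarrow{S_1 \otimes S_2} Y_1 \otimes Y_2$, obtained by permuting state and input factors via $\sigma$, applying $f \otimes g$, and permuting back. Bifunctoriality, the coherence axioms (pentagon, hexagon, triangle), and the symmetry axiom all transfer from $\CC$, with naturality of $\sigma$ providing the interchange needed for bifunctoriality.

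Finally, $F$ is the identity on objects and sends $f: X \to Y$ to the Mealy machine with state $1_{\CC}$ and underlying morphism $f$ (viewed as $1_{\CC} \otimes X \to 1_{\CC} \otimes Y$ via the unitors). Functoriality uses $1_{\CC} \otimes 1_{\CC} \cong 1_{\CC}$, and symmetric monoidality is analogous. The key nontrivial point is faithfulness: given $F(f) \sim F(g)$ via some $\alpha \in \Aut(1_{\CC})$, the induced $\phi_{\alpha}^X: X \to X$ and $\phi_{\alpha}^Y: Y \to Y$ are natural in $X, Y$ (since $\alpha$ defines a natural automorphism of the identity functor via the unitors), so $\phi_{\alpha}^Y \circ f = f \circ \phi_{\alpha}^X$ automatically; combined with the $F(f) \sim F(g)$ equation $\phi_{\alpha}^Y \circ f = g \circ \phi_{\alpha}^X$ and invertibility of $\phi_{\alpha}^X$, we deduce $f = g$. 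The main friction throughout will be the bookkeeping of associators and unitors of $\CC$; taking morphisms up to state-iso (or equivalently working in a strictification of $\CC$) neutralizes this, after which each verification is a routine diagram chase.
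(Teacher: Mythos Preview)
The paper does not actually prove this statement: it is labeled a \emph{Fact} and presented as standard background material in Section~\ref{subsubsection_Mealy}, with no accompanying argument. So there is no ``paper's own proof'' to compare against; your proposal stands on its own, and it is correct.

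One remark worth making: your choice to take morphisms in $\mathrm{Mealy}(\CC)$ to be equivalence classes modulo state-space isomorphism is one standard way to force strict associativity and unitality, and it works. However, note that the very next Fact in the paper speaks of a symmetric monoidal $2$-category of Mealy machines whose underlying $1$-category is $\mathrm{Mealy}(\CC)$, with $2$-morphisms given by arbitrary state-space maps $\phi: S \to T$. This sits slightly awkwardly with a quotient by state isomorphism (the $2$-morphisms would need to descend to the quotient). The paper is evidently being informal here; a cleaner reading is that $\mathrm{Mealy}(\CC)$ is a bicategory, or that one strictifies $\CC$ first so that no quotient is needed. Either way, your argument goes through, and your faithfulness proof via the natural automorphism $\eta_X = \lambda_X \circ (\alpha \otimes X) \circ \lambda_X^{-1}$ of the identity functor is the right idea and is correctly executed.
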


In fact, one can enrich that, and get the following:

\begin{fact}
    There exists a symmetric monoidal $2$-category of Mealy machines in $\CC$, whose underlying symmetric monoidal $1$-category is $\mathrm{Mealy}(\CC)$, and such that a $2$-morphism from $f: X \xrightarrow{S} Y$ to $g: X \xrightarrow{T} Y$ is given by a morphism $\phi: S \rightarrow T$ in $\CC$, such that the following diagram commutes:
\begin{center}
\[\begin{tikzcd}
	{S \otimes X} && {T \otimes X} \\
	{S \otimes Y} && {T \otimes Y}
	\arrow["{\phi \otimes X}", from=1-1, to=1-3]
	\arrow["f"{description}, from=1-1, to=2-1]
	\arrow["g"{description}, from=1-3, to=2-3]
	\arrow["{\phi \otimes Y}", from=2-1, to=2-3]
\end{tikzcd}\]
\end{center}

\end{fact}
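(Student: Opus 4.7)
The plan is to unpack the statement as a list of data (identities, vertical composition, horizontal composition / whiskering, tensor product on $2$-cells, and symmetries) and check that each piece is well-defined and that all the required axioms hold. Since the underlying $1$-category $\mathrm{Mealy}(\CC)$ and its symmetric monoidal structure have already been recorded in the preceding Fact, the work is to enhance this to a $2$-dimensional structure.

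First I would set up the vertical structure. For each pair of objects $X, Y$, one takes as objects the Mealy machines $f : X \xrightarrow{S} Y$, and as morphisms $f \Rightarrow g$ the maps $\phi : S \to T$ in $\CC$ making the stated square commute. The identity on $f$ is $1_S$, and the vertical composite of $\phi : f \Rightarrow g$ and $\psi : g \Rightarrow h$ is $\phi ; \psi : S \to U$ in $\CC$; closure under composition of the square-commutation condition is an immediate diagram paste, and associativity and unitality are inherited from $\CC$. This gives each hom-set the structure of a category.

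Next I would address horizontal composition. Given $\phi : f \Rightarrow f'$ between $f, f' : X \xrightarrow{S, S'} Y$ and $\psi : g \Rightarrow g'$ between $g, g' : Y \xrightarrow{T, T'} Z$, define $\phi \star \psi := \phi \otimes \psi : S \otimes T \to S' \otimes T'$. The main verification is that this makes the analogous square for the composite Mealy machines commute: this reduces to precomposing/postcomposing the defining square for $\phi$ (tensored with suitable identities) with the one for $\psi$, using naturality of the symmetries $\sigma$ of $\CC$ to slide the tensor factors past each other. Identities for horizontal composition are the identity $2$-cells on identity Mealy machines, and associativity of $\star$ follows from associativity of $\otimes$ in $\CC$ together with the coherence of $\sigma$. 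The middle-four interchange law $(\phi_1 ; \phi_2) \star (\psi_1 ; \psi_2) = (\phi_1 \star \psi_1) ; (\phi_2 \star \psi_2)$ is then immediate from the bifunctoriality of $\otimes$ in $\CC$.

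Finally I would extend the symmetric monoidal structure from $1$-cells to $2$-cells. Given $\phi : f \Rightarrow f'$ of signature $X \xrightarrow{S, S'} Y$ and $\psi : g \Rightarrow g'$ of signature $X' \xrightarrow{T, T'} Y'$, the tensor $\phi \otimes \psi$ is again $\phi \otimes \psi$ at the level of states, now viewed as a $2$-cell between the Mealy machines $f \otimes g$ and $f' \otimes g'$ of signature $X \otimes X' \xrightarrow{S \otimes T, S' \otimes T'} Y \otimes Y'$. Well-definedness, functoriality in each variable, and compatibility with $\star$ (so that $\otimes$ becomes a $2$-functor) are diagram chases of the same flavour as above, again using naturality of $\sigma$. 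The symmetries, associators, and unitors of the monoidal $1$-category lift to invertible $2$-cells because their components are isomorphisms in $\CC$ and the naturality squares of $\sigma$ provide the required commutativity; the pentagon, triangle, and hexagon coherence axioms then follow directly from the corresponding identities in $\CC$.

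The main technical obstacle is the verification that horizontal composition preserves the commutativity of the defining square, as this is the one step that genuinely uses the interplay between the symmetries $\sigma$ built into the composite Mealy machine and the two component squares; everything else ultimately reduces to bifunctoriality of $\otimes$, coherence of $\sigma$, and the fact that the square condition is preserved by tensoring with identities. A cleaner way to package all of this, which I would mention as a remark, is to observe that a Mealy machine $X \xrightarrow{S} Y$ is the same as a morphism $S \otimes X \to S \otimes Y$ in $\CC$ equipped with the distinguished state object $S$; the described $2$-cells are then precisely morphisms in an appropriate comma-like category, and both the $2$-categorical and the symmetric monoidal structures can be read off from that presentation.
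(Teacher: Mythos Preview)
The paper does not actually prove this Fact; it is stated as background material without proof, in the same spirit as the surrounding Facts about $\mathrm{Mealy}(\CC)$ and lenses. So there is no proof in the paper to compare against, and your outline is the expected verification.

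Your plan is correct and standard. One small imprecision in the last paragraph: the associators, unitors, and symmetries of the underlying monoidal $1$-category do not ``lift to invertible $2$-cells''; they remain $1$-cells in the $2$-category. What you need to verify is that they are \emph{natural with respect to $2$-cells}, i.e., that for any $2$-cell $\phi$ the appropriate whiskered squares of $2$-cells commute (so that $\sigma$, the associators, and the unitors become strict $2$-natural transformations, not just $1$-natural). This is again a consequence of naturality of $\sigma$ and bifunctoriality of $\otimes$ in $\CC$, so the argument you sketch goes through; only the phrasing should be adjusted.
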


\begin{remark}
    As explained in \cite[Subsection 1.2]{Libkind_Myers_2025}, this symmetric monoidal $2$-categorical approach can handle maps between systems as well as systems compositions (sequential and parallel), but only those with identical interfaces.
\end{remark}

Several variations on this idea have been studied; the most relevant to our work is that of \emph{coinductive streams} \cite{Lavore_Felice_Román_2025}, which are, up to suitable equivalence relations, sequences of maps $(M_{n-1} \otimes X_n \rightarrow M_n \otimes Y_n)_{n \in \mathbb{N}}$, where $M_n$ represents the memory at time $n$, and $X_n$ and $Y_n$ represent, for time $n$, input and output objects respectively.

\subsection{Systems as coalgebras}

Here, we give a brief introduction to the \emph{coalgebraic point of view on systems}. For more on this subject, see for instance \cite{Rutten_2000} \cite{Kurz} \cite{Staton_2009}  \cite{Geuvers_Jacobs_2021} \cite{Smithe_2023}.

\begin{definition}
    Let $\CC$ be a category, and $F: \CC \rightarrow \CC$ an endofunctor. 
    
    \begin{enumerate}
    \item A \emph{coalgebra} for $F$ is given by an object $X$, along with a morphism $d: X \rightarrow FX$.
    \item A morphism of coalgebras $\phi: (d: X \rightarrow FX) \rightarrow (e: Y \rightarrow FY)$ is a morphism $\phi: X \rightarrow Y$ such that the following square commutes:
    
    \begin{center}
\[\begin{tikzcd}
	X && Y \\
	FX && FY
	\arrow["\phi", from=1-1, to=1-3]
	\arrow["d"{description}, from=1-1, to=2-1]
	\arrow["e"{description}, from=1-3, to=2-3]
	\arrow["{F\phi}", from=2-1, to=2-3]
\end{tikzcd}\]
    \end{center}
    \end{enumerate}
\end{definition}

\begin{example}
	\begin{enumerate}
		\item     If $\CC$ is cartesian closed, then for any pair $(I, O)$ of objects in $\CC$, there is a category of Moore machines with interface $(I, O)$, defined as the category of coalgebras for the endofunctor $X \mapsto O \times X^I$.
		\item     If $\CC$ is cartesian closed, then for any pair $(I, O)$ of objects in $\CC$, there is a category of Mealy machines with interface $(I, O)$, defined as the category of coalgebras for the endofunctor $X \mapsto {(O \times X)}^I$.

	\end{enumerate}
\end{example}

\begin{remark}
	\begin{itemize}
		\item 	A morphism of coalgebras can be thought of as a \emph{simulation}, i.e. map between states \emph{that acts identically on the interfaces}, in a way that is compatible with the dynamics of the two systems. One can then consider a more symmetric notion of \emph{bisimulations} (see for instance \cite{Staton_2009}), which are, essentially, jointly monic spans of coalgebras.
		\item One of the motivations for the double categorical point of view is to get rid of this restriction that "simulations act identically on interfaces", by considering more general notions of morphispms of systems. Interestingly, recent work in the coalgebraic literature \cite{Nora_Rot_Schroder_Wild_2025} goes in a similar direction, defining a notion of \emph{heterogeneous (bi)simulations}.

	\end{itemize}
\end{remark}

\section{Preliminaries}

\subsection{Markov categories}

Markov categories are used in \emph{synthetic probability theory}, where the aim is to give \emph{abstract high-level algebraic axioms} for the behavior of Markov kernels, which are measurable maps between measurable spaces that output probability distributions, instead of points. 
The goal is to be able to study objects, define concepts, and prove general results, without having to spend as much time handling low-level set-theoretic details (sigma-algebras, and so on). For more background, we refer the reader to Fritz's seminal paper \cite{FRITZ-MarkovCats}, which develops the theory substantially; its introduction gives an account of the motivations.

		\begin{definition}\label{defi_Markov_cat}[See \cite{FRITZ-MarkovCats}, Definition 2.1]
			\begin{itemize}
				\item A Markov category is a symmetric monoidal category whose unit $I$ is a terminal object, and such that each object $X$ is equipped with a commutative comonoid structure $(copy_X: X \rightarrow X \otimes X, \, del_x: X \rightarrow I)$, in a way that is uniform with respect to the monoidal structure.

				Note that, since the monoidal unit $I$ is terminal, there are maps $X \otimes Y \rightarrow X \otimes I \simeq X$ and $X \otimes Y \rightarrow Y$ for all $X, Y \in \CC$. We think of these as projections, or marginalizing maps.
				\item The Markov category $\CC$ has conditionals if, for all morphisms $\phi: A \rightarrow X \otimes Y$, there exists a morphism $\phi|X : A \otimes X \rightarrow Y$ such that $\phi$ equals the following composite:

				\ctikzfig{cond_def}
				
			\end{itemize}

		\end{definition}

		\begin{remark}
			From now on, to simplify notations, we shall assume that the Markov categories we consider are strict, i.e. tensor product is strictly unital and strictly associative. Since any Markov category is comonoid equivalent to a strict Markov category (see \cite[Theorem 10.17]{FRITZ-MarkovCats}), this restriction is not essential.
		\end{remark}
		
		Let us fix a Markov category $\CC$.

\begin{definition}
	Let $p: I \rightarrow X \otimes Y \otimes Z$ be a morphism in $\CC$. We say that \emph{it displays conditional independence} of $X$ and $Z$ over $Y$ if there exist morphisms $f: Y \rightarrow X$ and $g: Y \rightarrow Z$ such that $p = (p ; \pi_Y ; copy_Y ; Y \otimes copy_Y; f \otimes Y \otimes g)$.
\end{definition}
					\ctikzfig{cond_indep_def}
		
		\begin{fact}[See Definition 12.8 and Proposition 12.9 in \cite{FRITZ-MarkovCats}]
			If $\CC$ has conditionals, given morphisms $f: A \rightarrow X \otimes Y$ and $g: A \rightarrow Y \otimes Z$ which have the same marginal $h: A \rightarrow Y$, there exists a unique morphism $f \otimes_Y g: A \rightarrow X \otimes Y \otimes Z$, called the \emph{conditional product} of $f$ and $g$ over $Y$, with the following properties:
			
			\begin{itemize}
				\item The marginals of $f \otimes_Y g$ on $X \otimes Y$ and $Y \otimes Z$ are $f$ and $g$ respectively.
				\item The morphism $f \otimes_Y g$ displays conditional independence over $Y$ of its marginals on $X$ and $Z$.
			\end{itemize}
			
			This conditional product can be defined as follows: 
			
			\ctikzfig{cond_product}
			
			One can check that this does not depend on the choice of $f|Y$ and $g|Y$, only on $f$ and $g$.
			
		\end{fact}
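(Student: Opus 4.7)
The statement has three components: existence of a morphism $f \otimes_Y g$ with the two listed properties, uniqueness of such a morphism, and independence of the explicit construction from the choice of conditionals $f|Y$ and $g|Y$. The plan is to establish existence directly from the given string-diagram formula, then prove uniqueness, from which independence of the construction from the choice follows for free.

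For existence, I would verify the two properties of the morphism $k := f \otimes_Y g$ produced by the diagram. To compute the marginal on $X \otimes Y$, I cap the $Z$-output with $del_Z$; naturality of $del$ and the comonoid axioms then collapse the right-hand branch of the diagram, reducing the composite to $(copy_A ; A \otimes h ; A \otimes copy_Y ; f|Y \otimes Y)$, which is precisely the defining equation of the conditional $f|Y$, so the marginal equals $f$. The $Y \otimes Z$-marginal is handled symmetrically. Conditional independence over $Y$ is visible on the diagram itself: after producing the $Y$-thread from $h$ and duplicating it with $copy_Y$, the $X$-leg and $Z$-leg are produced by separate branches depending only on copies of $Y$ (and copies of $A$), which matches the factorization required by the definition of displaying conditional independence.

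The main obstacle is uniqueness, since here one must use both properties simultaneously. Given any $k' : A \to X \otimes Y \otimes Z$ having the same $X \otimes Y$- and $Y \otimes Z$-marginals as $k$ and displaying conditional independence of $X$ and $Z$ over $Y$, the independence factorization lets me write $k'$ as a composite routed through $h$ and $copy_Y$, followed by two witnessing maps producing $X$ and $Z$ respectively. The marginal conditions then pin these witnesses down: the $X$-witness, when composed back through $copy_A$, $h$ and $copy_Y$, must recover $f$, so it satisfies the defining equation of a conditional of $f$ on $Y$, and similarly on the $Z$-side for $g$. Substituting back recovers exactly the defining diagram of $k$, so $k' = k$. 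The subtle point is ensuring the witnesses supplied by conditional independence can legitimately play the role of conditionals $f|Y$ and $g|Y$; the marginal property is what guarantees this. Independence of the construction from the initial choice of conditionals is then immediate: any valid choice produces a morphism satisfying both properties, which must coincide by uniqueness.
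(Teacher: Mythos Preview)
The paper does not give its own proof of this Fact; it is stated with a citation to Fritz, so there is no in-paper argument to compare against.

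Your existence argument is correct. The gap is in the uniqueness step, which is circular as written. You show that for any competitor $k'$ the conditional-independence witnesses $w_X, w_Z$ satisfy the defining equation of a conditional of $f$ (resp.\ $g$) on $Y$; but conditionals are not unique in a general Markov category with conditionals --- only almost-surely so --- so $w_X$ need not coincide with the particular $f|Y$ used to build $k$. What you have actually established is that $k'$ is an instance of the same string diagram with \emph{some} choice of conditionals. Concluding $k' = k$ then requires the diagram's value to be independent of that choice, which is exactly the statement you postpone to your third step and deduce from uniqueness.

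The repair is to prove independence of choice directly, before uniqueness. Using coassociativity and cocommutativity of $copy_A$ and $copy_Y$, one can rewrite the diagram so that the entire $f|Y$-branch, together with one of the retained copies of $Y$, is precisely the composite appearing in the defining equation of the conditional; that branch therefore collapses to $f$, and the whole diagram becomes $copy_A;\,(f \otimes A)$ followed by copying $Y$ and applying $g|Y$ to the remaining $(A,Y)$-leg. This depends only on $f$ and on the choice of $g|Y$, not on $f|Y$; by symmetry it depends only on $f$ and $g$. With well-definedness in hand, your witness argument then yields uniqueness without circularity.
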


\begin{definition}
	In the Markov category $\CC$, a morphism is called deterministic if it commutes with the $copy$ maps.
\end{definition}

\begin{ex}
	
	\begin{enumerate}
		\item Any cartesian category can be viewed as a Markov category where all morphisms are deterministic.
		\item The Kleisli category of any symmetric affine monoidal monad on a Markov category, is again a Markov category \cite[Corollary 3.2]{FRITZ-MarkovCats}.

	For instance, the following affine monoidal monads yield Markov categories:
	
	\begin{itemize}
		\item  The monad $P$ of nonempty subsets, on $Set$.

		\item  The Giry monad $G$ on measurable spaces. See for instance \cite[Section 4]{Giry}.
	\end{itemize}
	
	The Markov category associated to the Giry monad is usually denoted $Stoch$. Objects are measurable spaces, and morphisms are measurable kernels, for suitable $\sigma$-algebras. 
	
	\item The full subcategory $FinStoch \subseteq Stoch$, whose objects are \emph{finite} discrete measurable spaces.

	\item The full subcategory $BorelStoch \subseteq Stoch$, whose objects are standard Borel spaces, i.e. measurable spaces that are either discrete finite, discrete countable, or isomorphic to $\mathbb{R}$, is canonically a Markov category. 
	
	\end{enumerate}
\end{ex}

The Markov categories $Kl(P)$, $FinStoch$, $BorelStoch$ have conditionals. For $BorelStoch$, see \cite[Theorem 5]{faden1985existence}.

\begin{fact}[\cite{FRITZ-MarkovCats}, Remark 10.13]
	
The wide subcategory $\CC_{det} \subseteq \CC$, of deterministic morphisms in $\CC$, is a cartesian category that contains all structure maps.
\end{fact}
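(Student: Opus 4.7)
The plan is to split the proof into three parts: checking that $\CC_{det}$ is indeed a wide subcategory, verifying that all structure maps lie in $\CC_{det}$, and finally upgrading the symmetric monoidal structure on $\CC_{det}$ to a cartesian one by invoking (essentially) Fox's theorem.

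First I would verify closure. Identities clearly commute with $copy$, so they are deterministic. For composition, if $f : X \to Y$ and $g : Y \to Z$ are deterministic, then
\[
 (f;g) ; copy_Z \;=\; f ; (g ; copy_Z) \;=\; f ; (copy_Y ; g\otimes g) \;=\; (f ; copy_Y) ; (g\otimes g) \;=\; copy_X ; (f;g) \otimes (f;g),
\]
so $\CC_{det}$ is closed under composition. For the structure maps, I would check each family using the Markov axioms. Coassociativity and cocommutativity of $copy_X$ say exactly that $copy_X$ commutes with $copy_{X\otimes X}$, so $copy_X$ is deterministic. For $del_X$, terminality of $I$ forces $copy_I = id_I$ and any map into $I$ to equal the corresponding $del$, giving $del_X ; copy_I = del_X = copy_X ; del_{X\otimes X} = copy_X ; (del_X \otimes del_X)$. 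The symmetries, unitors and associators are deterministic by the uniformity axiom, which says that the comonoid on a tensor is the tensor of the comonoids (and likewise for the unit), and by naturality of the structure morphisms of the symmetric monoidal structure. Finally, the tensor of two deterministic morphisms is deterministic by uniformity, so $\otimes$ restricts to $\CC_{det}$.

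To get the cartesian property, my plan is to verify the hypotheses of Fox's theorem on $\CC_{det}$: namely, that $(copy, del)$ endows every object with a commutative comonoid structure which is natural in every morphism of $\CC_{det}$ and compatible with the monoidal structure. Compatibility with $\otimes$ and with the unit is precisely the uniformity axiom in Definition \ref{defi_Markov_cat}. Naturality of $copy$ is exactly the determinism condition that defines $\CC_{det}$; naturality of $del$ follows from terminality of $I$ in $\CC$, which forces $f ; del_Y = del_X$ for every $f : X \to Y$ in $\CC$. Fox's theorem then gives that $\CC_{det}$ is cartesian, with categorical product given by $\otimes$ and projections obtained by tensoring with $del$.

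I expect the proof itself to be entirely routine; the only step that requires a bit of care is the verification that the symmetries, unitors and associators are deterministic, as this needs the uniformity axiom to be unpacked explicitly (roughly: the image of a comonoid structure under a structure map of the symmetric monoidal category coincides with the comonoid on the codomain). Once this is in place, Fox's theorem does all the heavy lifting and no further calculation is needed.
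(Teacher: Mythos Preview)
Your proof is correct and follows the standard route (closure of deterministic morphisms under composition and tensor, verification that the structure maps are deterministic, then Fox's theorem). The paper itself gives no proof for this Fact, merely citing \cite{FRITZ-MarkovCats}; your argument is essentially the one found there, so there is nothing to compare.
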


Given an ambient Markov category $\CC$, morphisms in $\CC$ will sometimes be called \emph{Markov morphisms}, and morphisms in $\CC_{det}$ will be referred to as \emph{deterministic morphisms}.

\begin{notation}\label{notation_indep}
	Given objects $X$, $Y$, $Z$ in $\CC$ and a morphism $p: I \rightarrow X \otimes Y \otimes Z$, and assuming the context is clear, we let $x$, $y$, $z$ denote the generalized elements $I \rightarrow X$, $I \rightarrow Y$ and $I \rightarrow Z$.

	Then, we write $x \bot_y^p z$, or $x \bot_y z$ if $p$ is clear from the context, if the distribution $p: I \rightarrow X \otimes Y \otimes Z$ displays conditional independence of $X$ and $Z$ over $Y$. We extend the notations $x \bot^p_y z$ and $x \bot_y z$ to the case where $p: I \rightarrow X \otimes Y \otimes Z \otimes T$, for any object $T \in \CC$.

	Let us now extend the notations even further: if we are also given \emph{deterministic} maps $f: X\otimes Y \otimes Z \rightarrow U$ or $g: X\otimes Y \otimes Z \rightarrow V$, or $h: X\otimes Y \otimes Z \rightarrow W$, we may write conditional independence relations such as $ u \bot_{ v}^{p, f, g, h} w$ or (abusing notations) $u \bot^p_v w$, or even $u \bot_{v} w$, to express conditional independence for the distribution $(p ; copy_{X \otimes Y \otimes Z} ; (X\otimes Y \otimes Z \otimes copy_{X \otimes Y \otimes Z}); (f \otimes g \otimes h)): I \rightarrow U \otimes V \otimes W$. We might also write expressions such as $u, x \bot_v w, z$, or $ux \bot_{v y} w z$, etc., with the natural interpretation.
\end{notation}

\subsection{Conventions}

When denoting objects in Symmetric Monoidal Categories, we may write expressions such as $AB$, or $A \,\, B$ instead of $A \otimes B$, in order to save space. We may also write expressions such as $AB \otimes CD$, or $AB \, \, CD$, for emphasis. The monoidal unit may be denoted $1$ or $*$. Regarding morphisms compositions, we use the following rules:

\begin{enumerate}
	\item The expression $f; g$ denotes $g \circ f$.
	\item The expression $f \otimes g; h \otimes k$ denotes $(f \otimes g) ; (h \otimes k)$. 
	\item (For Markov categories) We may write $\sigma: A B C D \rightarrow A C B D$, instead of $A \otimes \sigma_{B, C} \otimes D$. 
	\item (For Markov categories) Similarly, we may use $copy_B: A B C \rightarrow A B B C$ as shorthand for $A \otimes copy_B \otimes C$. 
\end{enumerate}
			
\subsection{Triple categories}

\subsubsection{Strict double and triple categories}

Let us first recall what we mean by "strict double categories". The reader may also refer to e.g. \cite[Introduction]{Dawson_Pare_1993}.

\begin{definition}
	A strict double category is given by:
	
	\begin{itemize}
		\item A collection of objects.
		\item A pair of categories on this collection of objects, which we may call the $x$- and $y$- categories.

		\item For each suitable pair of pairs of morphisms (arranged as the boundary of a square), a collection of squares, with (strictly) associative and unital $x$- and $y$- composition operations on adjacent squares.  We require $xy$ interchange equalities, for all suitable quadruples of squares. Squares shall also be called $xy$-morphisms.
	\end{itemize}
\end{definition}

Equivalently, a small strict double category is a category strictly internal to the category of small categories.

\begin{definition}
	A strict double functor, or morphism of (small) strict double categories, is a functor internal to the category of (small) categories. It preserves identities and composition strictly, in both directions.
\end{definition}

Let us now give definitions for strict triple categories. These were introduced by Ehresmann \cite{Ehresmann_1963}.

\begin{definition}

	A strict triple category is given by:
	
	\begin{itemize}
		\item A collection of objects.
		\item A triple of categories on this collection of objects, which  we call the $x$, $y$, and $z$ category respectively.
		\item A triple of strict double categories, suggestively named the $xy$, $yz$, and $xz$ categories, whose underlying $1$-dimensional categories are the $x$, $y$, $z$ categories, as appropriate. For instance, the underlying $1$-categories of the $xy$ category are the $x$ category and the $y$ category.
		\item For each suitable sextuple of squares (arranged as the boundary of a cube), a collection of cubes,  with associative and unital $x$-, $y$-, and $z$- composition operations on adjacent cubes.  We require $xy$-, $yz$-, and $xz$- interchange equalities, for all suitable quadruples of cubes. Cubes shall also be called $xyz$-morphisms.
		
	\end{itemize}

	In other words, a (small) strict triple category is a strict category internal to the category of (small) strict double categories and double functors.

\end{definition}

\begin{definition}
	A strict triple functor between triple categories maps objects to objects, and $*$-morphisms to $*$-morphisms, for $* \in \lbrace x, y, z, xy, yz, xz, xyz \rbrace$, sending identities to identities, and being strictly compatible with all compositions.
\end{definition}	

\begin{definition}\label{defi_hom_triple_cat}
	Let $\mathbb{C}$, $\mathbb{D}$ be strict triple categories. We define the triple category $[\mathbb{C}, \mathbb{D}]$ of strict triple functors, and $x$-, $y$-, $z$-, $xy$-, $yz$-, $xz$-, $xyz$-natural transformations as follows:
	\begin{itemize}
		\item Let $F, G: \mathbb{C} \rightarrow \mathbb{D}$ be triple functors. 
 An $x$-natural transformation $\alpha: F \rightarrow G$
 
 \begin{enumerate}
 	\item maps objects $c$ of $\mathbb{C}$ to $x$-morphisms $\alpha(c): F(c) \rightarrow G(c)$, naturally in the $x$ direction.
 	\item maps $y$ morphisms, resp. $z$ morphisms, of $\mathbb{C}$ to $xy$ morphisms, resp. $xz$ morphisms, of $\mathbb{D}$, with prescribed boundaries, functorially in the $y$ direction, resp. in the $z$ direction, with the usual naturality condition for $xy$-squares of $\mathbb{C}$, resp. $xz$-squares of $\mathbb{C}$.
 	\item maps $yz$ squares $s$ of $\mathbb{C}$ to $xyz$ cubes $\alpha(s)$ in $\mathbb{D}$, having prescribed boundaries, double functorially in the $yz$ square $s$, and with a naturality condition for $xyz$ cubes of $\mathbb{C}$.
\end{enumerate}

We define $y$ natural transformations and $z$ natural transformations in a similar way.
		
		\item Let $F_1, G_1, F_2, G_2 : \mathbb{C} \rightarrow \mathbb{D}$ be triple functors, let $\alpha_i: F_i \rightarrow G_i$ be $x$-natural transformations, $i=1,2$, and let $\beta_1 : F_1 \rightarrow F_2$, $\beta_2: G_1 \rightarrow G_2$ be $y$-natural transformations. An $xy$ natural transformation $\gamma$ with boundary

			\begin{tikzcd}
				F_1 \arrow[r, "\alpha_1"] \arrow[d, "\beta_1"]& G_1 \arrow[d, "\beta_2"]
				\\
				F_2 \arrow[r, "\alpha_2"]& G_2
			\end{tikzcd}
 is defined by the following data:

\begin{enumerate}
	\item It maps objects of $\mathbb{C}$ to $xy$ squares in $\mathbb{D}$ with suitable boundaries, satisfying naturality conditions for $x$ morphisms and $y$ morphisms in $\mathbb{C}$ similar to those for modifications.
	\item It maps $z$ morphisms of $\mathbb{C}$ to $xyz$ cubes in $\mathbb{D}$ with the required boundaries, functorially in the $z$ direction.
\end{enumerate}

		\item Given eight triple functors, four $x$-natural transformations, four $y$-natural transformations, four $z$-natural transformations, two $xy$-natural transformations, two $xz$-natural transformations, and two $yz$-natural transformations, arranged as the boundary of a cube, an $xyz$ natural transformation with this boundary maps objects of $\mathbb{C}$ to $xyz$ morphisms in $\mathbb{D}$ with suitable boundary, satisfying naturality conditions with respect to $x$-morphisms, $y$-morphisms and $z$-morphisms of $\mathbb{C}$. 

	\end{itemize}

\end{definition}

\begin{definition}
	Let $\CC$ be a $1$-category. For $* \in \lbrace x, y, z \rbrace$, we let $*(\CC)$ denote the triple category concentrated in the $*$-direction, with only trivial squares and cubes, built from $\CC$. Similarly, any strict double category can be viewed as a strict triple category, in (at least) three different ways.
	
	Conversely, if $\mathbb{C}$ is a strict triple category, we let $\mathbb{C}_{x}$ denote the $1$-category whose morphisms are $x$-morphisms in $\mathbb{C}$. Similarly, $\mathbb{C}_{xy}$ denotes the double category obtained by forgetting the $xz$, $yz$ and $xyz$ morphisms in $\mathbb{C}$. We also let $\mathbb{C}^{*}(a, b)$, or $Hom_{\mathbb{C}}^*(a,b)$, denote the collection of $*$-morphisms from $a$ to $b$, for $* \in \lbrace x, y, z \rbrace$, if $a$ and $b$ are objects of $\mathbb{C}$.
\end{definition}

\subsubsection{Analogues without identities}
Now, let us give slightly more general definitions, relaxing some conditions. Recall that a \emph{semicategory} is defined similarly to a category, except that identities are not assumed to exist.

\begin{definition}\label{defi_semi_double_cat}
	An $x$-semi double category is given by:
	
	\begin{itemize}
		\item A collection of objects.
		\item A pair of semi-categories on this collection of objects, which we may call the $x$- and $y$- semicategories, such that the $y$-semicategory is a category.

		\item For each suitable pair of pairs of morphisms (arranged as the boundary of a square), a collection of squares, with (strictly) associative $x$- and $y$- composition operations on adjacent squares.  We require that all $x$-morphisms have corresponding identity squares for $y$-composition. We also require $xy$ interchange equalities, for all suitable quadruples of squares. Squares shall also be called $xy$-morphisms.
	\end{itemize}
\end{definition}

\emph{In other words, a (small) strict x-semi double category is a category strictly internal to the complete category SemiCat of (small) semicategories}. Such a structure corresponds to the following data:

\begin{itemize}
	\item Objects $\CC_0$ and $\CC_1$ in $\mathcal{SemiCat}$, corresponding to the semicategory of objects and $x$-morphisms, and the semicategory of $y$-morphisms and $xy$-squares, respectively.
	\item \emph{Semifunctors} $\mathrm{dom}$ and  $\mathrm{cod}$, from $\CC_1$ to $\CC_0$, mapping $y$-morphisms to their source, resp. target, and mapping $xy$-squares, viewed here as morphisms between $y$-morphisms, to their source, resp. target.
	\item A \emph{semifunctor} $1: \CC_0 \rightarrow \CC_1$ that maps each object to its identity $y$-morphism, and each $x$-morphism to its identity $xy$-square.

	\item A composition \emph{semifunctor} $F: \CC_1 \times_{\CC_0} \CC_1 \rightarrow \CC_1$, acting on the objects of $\CC_1$ as $y$-composition of $y$-morphisms, and on the morphisms of $\CC_1$ as $y$-composition of $xy$-squares. Semifunctoriality corresponds to $xy$-interchange.
\end{itemize}

The conditions are strict associativity of $F$, the fact that the semifunctor $1: \CC_0 \rightarrow \CC_1$ is a section of $\mathrm{dom}$ and $\mathrm{cod}$, and unitality of it with respect to $F$.

\begin{definition}
	A morphism of $x$-semi double categories, or $x$-semi double functor, or semi double functor when the context is clear, is a semi double functor which is unital in the $y$ direction, i.e. an internal functor in the category $SemiCat$ of (small) semicategories.
\end{definition}

Now, let us do the same for triple categories:

\begin{definition}\label{defi_semi_triple_cat}
	A (small) strict $x$-semi triple category is a category strictly internal to the (complete) category of (small) $x$-semi double categories and $x$-semi double functors.
\end{definition}

Such a structure corresponds to the following data:

\begin{itemize}
	\item Objects $\CC_0$ and $\CC_1$ in $\mathcal{SemiCat}$, corresponding to the $x$-semi double category of objects, $x$-morphisms, $y$-morphisms and $xy$-squares, and the $x$-semi double category of $z$-morphisms, $xz$-squares, $yz$-squares, and $xyz$ cubes, respectively.
	\item \emph{Semi double functors} $\mathrm{dom}$ and  $\mathrm{cod}$, from $\CC_1$ to $\CC_0$, mapping $z$-morphisms, $xz$-morphisms, $yz$-morphisms and $xyz$-morphissms to their source, resp. target, in the $z$ direction.
	\item A \emph{semi double functor} $1: \CC_0 \rightarrow \CC_1$ that maps each object to its identity $z$-morphism, each $x$-morphism to its identity $xz$-square, each $y$-morphism to its identity $yz$-square, and each $xy$-square to its identity $xyz$-cube.

	\item A composition \emph{semi double functor} $F: \CC_1 \times_{\CC_0} \CC_1 \rightarrow \CC_1$, corresponding to compositions in the $z$ direction. Semi double functoriality corresponds to $xz$- and $yz$- interchange for squares and cubes.
\end{itemize}

The conditions are, as before, strict associativity of $F$, the fact that the semi double functor $1: \CC_0 \rightarrow \CC_1$ is a section of $\mathrm{dom}$ and $\mathrm{cod}$, and unitality of it with respect to $F$.

\begin{definition}
	A strict $x$-semi triple functor is a functor internal to the category of (small) $x$-semi double categories and $x$-semi double functors.
\end{definition}

We can then consider natural transformations between such semi triple functors.

\begin{fact}
	Given $x$-semi triple categories $\mathbb{C}$ and $\mathbb{D}$, there exists an $x$-semi triple category $[\mathbb{C}, \mathbb{D}]$ whose objects are $x$-semi triple functors, and whose morphisms of dimension $1$, $2$ and $3$ are natural transformations, in a sense similar to Definition \ref{defi_hom_triple_cat} (more precisely, to the complete Definition \ref{defi_nat_transfo_full}), \emph{mutatis mutandis}.

	This construction is functorial, in the sense that it defines a functor $[\cdot , \cdot]: T^{op} \times T \rightarrow T$, where $T$ denotes the complete $1$-category of (small) $x$-semi triple categories and $x$-semi triple functors. 
\end{fact}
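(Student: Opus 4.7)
The plan is to construct $[\mathbb{C}, \mathbb{D}]$ by iterated pointwise definition, mirroring the layered internal-category description of $x$-semi triple categories: such a category is internal to $x$-semi double categories, which are in turn internal to the complete category $\mathcal{SemiCat}$. Since pointwise evaluation in $\mathbb{D}$ produces all the composition and identity data we need, the construction is essentially forced.

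First I would describe the underlying semicategorical data. Objects are $x$-semi triple functors $F: \mathbb{C} \to \mathbb{D}$. For $* \in \lbrace x, y, z \rbrace$, the $*$-morphisms are $*$-natural transformations, whose data is spelled out exactly as in Definition \ref{defi_hom_triple_cat} (items about $x$-, $y$-, $z$-naturals), reinterpreted in the semi setting; $*$-composition is defined componentwise from $*$-composition in $\mathbb{D}$. Associativity is immediate, and the $y$- and $z$-directions admit identities, supplied pointwise by the $y$- and $z$-identity morphisms of $\mathbb{D}$; the $x$-direction stays a semicategory since identities are not available in $\mathbb{D}$ in that direction. Then I would add the two-dimensional layer: $xy$-, $yz$-, $xz$-natural transformations, with boundaries prescribed as in Definition \ref{defi_hom_triple_cat}, and pointwise composition in both relevant directions. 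Finally, $xyz$-natural transformations are defined as assignments of $xyz$-cubes in $\mathbb{D}$ to objects of $\mathbb{C}$, subject to the evident naturality conditions with respect to $x$-, $y$-, and $z$-morphisms of $\mathbb{C}$, with pointwise composition in each of the three directions. In each case, the semifunctoriality constraints from the internal-category definition translate, after pointwise evaluation, into exactly the interchange equalities that hold in $\mathbb{D}$.

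Next I would verify the $x$-semi triple category axioms. Strict associativity of each of the seven compositions, the three interchange laws ($xy$, $yz$, $xz$) relating square compositions, the three further interchanges involving cubes, and unitality with respect to $y$- and $z$-identities, all reduce, object by object, to the corresponding identities in $\mathbb{D}$, which hold by hypothesis. The absence of $x$-identities in $\mathbb{D}$ forces $[\mathbb{C}, \mathbb{D}]$ to be $x$-semi rather than strict, but nothing else in the construction breaks.

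For functoriality of $[-, -]: T^{op} \times T \to T$, given $x$-semi triple functors $F: \mathbb{C}' \to \mathbb{C}$ and $G: \mathbb{D} \to \mathbb{D}'$, I would define $[F, G]$ by pre- and post-composition at every dimension: it sends a semi triple functor $H: \mathbb{C} \to \mathbb{D}$ to $G \circ H \circ F$, and sends a natural transformation of any type to its whiskering by $F$ and $G$. Strict compatibility with composition and identities of $F$ and $G$ is immediate because whiskering is itself given componentwise by composition with the structure maps of $F$ and $G$. The chief obstacle in this whole argument is purely bookkeeping: spelling out the data and naturality conditions for $xy$-, $xz$-, $yz$- and especially $xyz$-natural transformations (each with many boundary faces to match), and then checking the seven interchange laws; no conceptual difficulty arises because all the needed structure in the target is simply inherited from $\mathbb{D}$ evaluated objectwise.
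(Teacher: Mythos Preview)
Your proposal is correct and matches the paper's approach. The paper itself does not supply a proof of this Fact: it simply asserts the result, deferring the content to the explicit definitions of the various natural transformations in Definition \ref{defi_nat_transfo_full}. Your pointwise construction, with all axioms checked objectwise in $\mathbb{D}$ and functoriality via whiskering, is precisely the standard argument the paper is leaving implicit.
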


\section{Theories of nondeterministic systems}

\subsection{Modules of systems}

The following Definition was taken from \cite[Explication 4.8]{Libkind_Myers_2025}.

\begin{definition}\label{defi_modules_systems}
 A module of systems $\mathbb {S} \colon  \bullet  \mathrel {\mkern 3mu\vcenter {\hbox {$\shortmid $}}\mkern -10mu{\to }} \mathbb {I}$ consists of:

    \begin{itemize}
        \item A symmetric monoidal double category of interfaces, interface maps, and  interactions $\mathbb{I}$.
        \item A symmetric monoidal category of systems $\mathsf{Car}(\mathsf{S})$.
        \item A symmetric monoidal functor assigning systems to their interface, $\mathsf{Car}(\mathsf{S}) \rightarrow \mathsf{Tight}(\mathbb{I})$.
        \item An action of interactions on systems \(\mathsf {Loose}({\mathbb {I}}) \curvearrowright   \mathsf {Car}(\mathsf {S})\) that respects the interface.
    \end{itemize}

\end{definition}

\begin{remark}
    With our conventions and terminology, the "loose" morphisms are in the $y$ direction, and the "tight" ones are in the $x$ direction.
\end{remark}

\begin{remark}
    The intuition behind Definition \ref{defi_modules_systems} is the following:

    \begin{itemize}
        \item The symmetric monoidal double category $\mathbb{I}$ contains:
            \begin{itemize}
                \item Interfaces as objects.
                \item Interface maps/representations between interfaces, as $y$-morphisms/loose morphisms.
                \item Interactions/channels between interfaces, as $x$-morphisms/tight morphisms.
                \item Representations between channels/interaction maps, as squares.
            \end{itemize}
        \item The symmetric monoidal category $\mathsf{Car}(\mathsf{S})$ contains:
        \begin{itemize}
            \item Systems as objects.
            \item System maps/representations between systems, as morphisms.
        \end{itemize}
        \item For each system, its interface is defined as its image under the functor $\mathsf{Car}(\mathsf{S}) \rightarrow \mathsf{Tight}(\mathbb{I})$.
        \item The composite of a system with an interaction/channel, is again a system.

        \item The symmetric monoidal structures correspond to "parallel product" operations, either on systems, interactions, or maps between those.
    \end{itemize}
\end{remark}

\begin{example}
    Let $2$ denote the $1$-category with objects $0$ and $1$, and a single nonindentity arrow $0 \rightarrow 1$. Let $y(2)$ denote the double category built from $2$, with $2$ objects as well, concentrated in the $y$-direction, with only identity squares.

    Let $\mathbb{D}$ be a double category, and $H: \mathbb{D} \rightarrow y(2)$ be a double functor. Assume that:

    \begin{itemize}
        \item The double category $\mathbb{I}:= H^{-1}(1)$ is equipped with a symmetric monoidal structure.
        \item The category $H^{-1}(0 \rightarrow 1)$, whose objects are arrows above $0 \rightarrow 1$, and whose morphisms are squares in $\mathbb{D}$, is equipped with a symmetric monoidal structure, such that $\mathrm{cod}: H^{-1}(0 \rightarrow 1) \rightarrow {H^{-1}(1)}_x = \mathsf {Loose}({\mathbb {I}})$ is a strict symmetric monoidal functor.
    \end{itemize}

    Then, the data $\mathsf {Car}(\mathsf {S}):= H^{-1}(0 \rightarrow 1)$, $\mathrm{cod}: \mathsf{Car}(\mathsf{S}) \rightarrow \mathsf{Tight}(\mathbb{I})$, and the action \(\mathsf {Loose}({\mathbb {I}}) \curvearrowright   \mathsf {Car}(\mathsf {S})\) defined by composition in $\mathbb{D}$, yields a module of systems.
\end{example}

In our context, the constructions do not yield such objects, we only get semicategories, etc. So, let us extend the framework slightly:

\begin{definition}\label{defi_semi_modules_systems}
 A \emph{semimodule of systems} $\mathbb {S} \colon  \bullet  \mathrel {\mkern 3mu\vcenter {\hbox {$\shortmid $}}\mkern -10mu{\to }} \mathbb {I}$ consists of:

    \begin{itemize}
        \item A symmetric monoidal $x$-semi double category of interfaces, interface maps, and  interactions $\mathbb{I}$.
        \item A symmetric monoidal semicategory of systems $\mathsf{Car}(\mathsf{S})$.
        \item A symmetric monoidal semifunctor assigning systems to their interface, $\mathsf{Car}(\mathsf{S}) \rightarrow \mathsf{Tight}(\mathbb{I})$.
        \item An action of interactions on systems \(\mathsf {Loose}({\mathbb {I}}) \curvearrowright   \mathsf {Car}(\mathsf {S})\) that respects the interface.
    \end{itemize}

\end{definition}

    




%
%

\subsection{The triple category $Arena^{\mathrm{Moore}}_{\CC}$}

Let $\CC$ be a Markov category with conditionals. We wish to construct an $x$-semi triple category $Arena^{\mathrm{Moore}}_{\CC}$, where:

\begin{itemize}
	\item The objects are interfaces, i.e. pairs of objects of $\CC$.
	\item The $x$ morphisms are nondeterministic \emph{copy-composition} charts.
	\item The $y$ morphisms are deterministic lenses.

	\item The $z$ morphisms are pairs of deterministic maps.
\end{itemize}
 This one is intended as an intermediate step. Later on, given a notion of time given by a graph $\GG$, we shall build an $x$-semi double category $ArenaSys^{\mathrm{Moore}}_{\CC}(\GG)$, which will be our candidate for a ``theory of $\GG$-time nondeterministic systems''.

\begin{theorem}\label{theo_Arena_C}
	There exists a symmetric monoidal $x$-semi triple category $Arena^{\mathrm{Moore}}_{\CC}$ with the following properties:
	
	\begin{enumerate}
		\item Objects are pairs $\begin{pmatrix} a \\ c  \\ \end{pmatrix}$, where $a$ and $c$ are objects of $\CC$.
		\item An $x$-morphism ${\begin{pmatrix} a_1 \\ c_1  \\ \end{pmatrix}} \rightrightarrows {\begin{pmatrix} a_2 \\ c_2  \\ \end{pmatrix}}$ is given by an object ${\begin{pmatrix} a_{12} \\ c_{12}  \\ \end{pmatrix}}$ and a pair of Markov morphisms $g: c_1 \rightarrow c_{12} \otimes c_2$, $g^{\flat}: c_1 \otimes a_1 \rightarrow c_{12} \otimes c_2 \otimes a_{12} \otimes a_2$ such that $g^{\flat} ; \pi_{c_{12} \otimes c_2} = \pi_{c_1}; g$.
		The $x$-morphisms compose using \emph{copy-composition}.
		
		\item The $y$-morphisms are deterministic lenses ${\begin{pmatrix} a_1 \\ c_1  \\ \end{pmatrix}} \leftrightarrows {\begin{pmatrix} a_2 \\ c_2  \\ \end{pmatrix}}$, i.e. pairs of deterministic maps $f: c_1 \rightarrow c_2$, $f^{\sharp}: c_1 \otimes a_2 \rightarrow a_1$.
		
		\item The $z$-morphisms ${\begin{pmatrix} a_1 \\ c_1  \\ \end{pmatrix}} \rightrightarrows {\begin{pmatrix} a_2 \\ c_2  \\ \end{pmatrix}}$, are pairs of \emph{deterministic} morphisms $f: c_1 \rightarrow c_2$, $g: a_1 \rightarrow a_2$, in $\CC$.

		\item The $xy$-morphisms with boundary as below
		\begin{center}
\[\begin{tikzcd}
	\begin{array}{c} {\begin{pmatrix} a_1 \\ c_1  \\ \end{pmatrix}} \end{array} && \begin{array}{c} {\begin{pmatrix} a_2 \\ c_2  \\ \end{pmatrix}} \end{array} \\
	\begin{array}{c} {\begin{pmatrix} a_3 \\ c_3  \\ \end{pmatrix}} \end{array} && \begin{array}{c} {\begin{pmatrix} a_4 \\ c_4  \\ \end{pmatrix}} \end{array}
	\arrow["{(a_{12}, c_{12}, f_{12}, f^{\flat}_{12})}", shift left=2, from=1-1, to=1-3]
	\arrow[shift right=2, from=1-1, to=1-3]
	\arrow["{(f_{13}, f_{13}^{\flat})}"', shift right=2, from=1-1, to=2-1]
	\arrow[shift right=2, from=1-3, to=2-3]
	\arrow[shift right=2, from=2-1, to=1-1]
	\arrow[shift left=2, from=2-1, to=2-3]
	\arrow["{(a_{34}, c_{34}, f_{34}, f^{\flat}_{34})}"', shift right=2, from=2-1, to=2-3]
	\arrow["{(f_{24}, f_{24}^{\flat})}"', shift right=2, from=2-3, to=1-3]
\end{tikzcd}\]
		\end{center}
		
		are given by $y$-morphisms $( f_{1234}, f_{1234}^{\sharp}): \lens{a_{12}}{c_{12}}{a_{34}}{c_{34}}$, along with Markov maps $s_{}:  c_1 \otimes a_3 \rightarrow     c_{12} \otimes c_2 \otimes a_{34} \otimes a_4$ such that the following equations hold:
		\begin{enumerate}
			\item $f_{12}; (f_{1234} \otimes f_{24}) = f_{13} ; f_{34}$
			\item $(s ; \pi_{c_{12} \otimes c_2 \otimes a_{34} \otimes a_4}; f_{1234} \otimes f_{24} \otimes a_4) = (\pi_{c_1 \otimes a_3} ; f_{13} \otimes a_3 ; f_{34}^{\flat})$
			\item $(copy_{c_1}; \sigma; f_{13}^{\sharp}; f_{12}^{\flat}) = (s; copy_{c_{12} \otimes c_2}; f_{1234}^{\sharp} \otimes f_{24}^{\sharp})$
		\end{enumerate}
		
		Composition in the $x$-direction is copy-composition in $\CC$. Composition in the $y$-direction uses conditionals products in $\CC$.
		
		\item  The $yz$-morphisms with boundary as below
		
		\begin{center}
			\begin{tikzcd}
				{\begin{pmatrix} a_1 \\ c_1  \\ \end{pmatrix}}  \arrow[d, "{( f_{13}, f_{13}^{\sharp})}", swap, shift right] \arrow[r, "{(f_{12}, g_{12})}", shift left]\arrow[r, shift right] & {\begin{pmatrix} a_2 \\ c_2  \\ \end{pmatrix}}   \arrow[d, shift right]
				\\
				{\begin{pmatrix} a_3 \\ c_3  \\ \end{pmatrix}} \arrow[u, shift right]\arrow[r, "{(f_{34}, g_{34})}", shift left] \arrow[r, shift right] & {\begin{pmatrix} a_4 \\ c_4  \\ \end{pmatrix}}\arrow[u, "{( f_{24}, f_{24}^{\sharp})}", shift right, swap]
			\end{tikzcd}
		\end{center} 
		
		are given by the equation $f_{13}; f_{34} = f_{12}; f_{24}$. Compositions are just concatenations of commuting squares in $\CC_{det}$.
		
		\item The $xz$-morphisms with boundary as below

		\begin{center}

\[\begin{tikzcd}
	\begin{array}{c} {\begin{pmatrix} a_1 \\ c_1  \\ \end{pmatrix}} \end{array} && \begin{array}{c} {\begin{pmatrix} a_2 \\ c_2  \\ \end{pmatrix}} \end{array} \\
	\begin{array}{c} {\begin{pmatrix} a_3 \\ c_3  \\ \end{pmatrix}} \end{array} && \begin{array}{c} {\begin{pmatrix} a_4 \\ c_4  \\ \end{pmatrix}} \end{array}
	\arrow["{(a_{12}, c_{12}, f_{12}, f^{\flat}_{12})}", shift left=2, from=1-1, to=1-3]
	\arrow[shift right=2, from=1-1, to=1-3]
	\arrow["{(f_{13}, g_{13})}"', shift right=2, from=1-1, to=2-1]
	\arrow[shift left=2, from=1-1, to=2-1]
	\arrow[shift right=2, from=1-3, to=2-3]
	\arrow["{(f_{24}, g_{24})}", shift left=2, from=1-3, to=2-3]
	\arrow[shift left=2, from=2-1, to=2-3]
	\arrow["{(a_{34}, c_{34}, f_{34}, f^{\flat}_{34})}"', shift right=2, from=2-1, to=2-3]
\end{tikzcd}\]
\end{center}

are given by pairs of deterministic maps $f_{1234}: c_{12} \rightarrow c_{34}$, $g_{1234}: a_{12} \rightarrow a_{34}$ such that the following equations hold: 

\begin{enumerate}
	\item $(f_{12}; f_{1234} \otimes f_{24}) = f_{13}; f_{34}$
	\item $(f_{12}^{\flat}; f_{1234} \otimes f_{24} \otimes g_{1234} \otimes g_{24}) = (f_{13} \otimes g_{13}); f_{34}^{\flat}$

\end{enumerate}
		
		\item There is exactly one $xyz$-morphism, i.e. cube, for each boundary. 
		
	\end{enumerate}
\end{theorem}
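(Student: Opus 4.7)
The plan is to build $Arena^{\mathrm{Moore}}_{\CC}$ dimension by dimension, following the internal-category description of Definition \ref{defi_semi_triple_cat}. First I would fix the collections of cells exactly as prescribed in items (1)--(8) and install the underlying $1$-dimensional structures: the $y$- and $z$-directions inherit associativity and identities from lens composition and composition of deterministic morphisms in $\CC_{det}$ respectively, while the $x$-direction gives only a semicategory because copy-composition is associative (by coassociativity of $copy$) but lacks identities. This already forces the ``semi'' qualifier and explains why we work with an $x$-semi triple category rather than a strict one.

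Next I would verify that the three $2$-dimensional structures are strict semi double categories. The $yz$-squares are simply commuting squares in $\CC_{det}$, so everything is automatic. For $xz$-squares, the two compatibility equations are preserved under $x$- and $z$-composition because the $z$-maps are deterministic, hence commute with $copy$ and pass through copy-composition. The $xy$-squares are the delicate case: $x$-composition is copy-composition on the underlying Markov data, and $y$-composition is defined via conditional products. I would check, in order, (i) that $x$-composition preserves the marginal equation $s;\pi_{c_{12}\otimes c_2 \otimes a_{34}\otimes a_4}; \ldots = \pi_{c_1\otimes a_3};\ldots$; (ii) that the marginals required for the conditional product to exist for $y$-composition agree, using the marginal constraint defining an $xy$-square together with the lens equations of its boundary; (iii) strict associativity of $y$-composition, which reduces to the fact that iterated conditional products coincide when all intermediate marginals match, by uniqueness of the conditional product.

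The main obstacle, which I expect to absorb most of the work, is the $xy$-interchange law for $xy$-squares. Because $y$-composition is built from conditional products and $x$-composition is built from copy-composition, interchange amounts to a Fubini-type statement: copying a shared intermediate and then conditionally-independently gluing must agree with conditionally-independently gluing and then copying. This is exactly where determinism of the lens/channel data in the $y$-direction is crucial, and it explains the design choice highlighted in the introduction. Concretely, I would translate both sides of the interchange into conditional independence statements using Notation \ref{notation_indep}, show that the marginals agree by naturality of $copy$ and $del$ together with the square equations, and then invoke uniqueness of conditional products to conclude equality of the composed squares.

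Finally, by item (8) the $xyz$-cubes are uniquely determined by their boundaries, so the semifunctorial structure in the $z$-direction on squares extends tautologically to cubes, and the $xz$-, $yz$-, $xyz$-interchange identities for cubes reduce to equalities of boundaries, which are already enforced. The symmetric monoidal structure is inherited cell-wise from the tensor $\otimes$ of $\CC$ applied componentwise to interfaces, lenses, charts and squares; bifunctoriality in each direction follows from naturality of $\otimes$ and of the comonoid structure, and compatibility of $\otimes$ with conditional products (again by the uniqueness characterisation) ensures that the monoidal product of $xy$-squares is an $xy$-square and commutes with $y$-composition. With these verifications in place, the eight items of the statement are exactly the data of the asserted symmetric monoidal $x$-semi triple category.
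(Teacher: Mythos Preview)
Your proposal is correct and follows essentially the same approach as the paper's Construction \ref{constr_Arena_C}: build the cells as prescribed, check the $1$-dimensional structures, then the three $2$-dimensional semi double categories, with the bulk of the work in well-definedness and interchange for $xy$-squares via conditional products, and finally observe that cubes are thin so all cube-level laws are automatic.

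One point worth sharpening: for $xy$-interchange, showing that the marginals of the two composites agree is not by itself enough to invoke uniqueness of conditional products; you must also show that the side obtained by first $y$-composing then $x$-composing actually \emph{is} a conditional product over the same middle object. The paper isolates this as the crux and proves it via a ``regeneration'' lemma (Lemma \ref{lemma_CP_and_regen} and Claim \ref{claim_regen_for_y_comp}): because the lens maps $f_{1234}, f_{24}, f_{3456}^{\sharp}, f_{46}^{\sharp}$ are deterministic, the intermediate conditional-product map $\alpha$ can be recovered from its marginal $\frac{s}{t}$ by postcomposing with these maps, and this is what allows one to rewrite the copy-composite $\Psi$ as a conditional product. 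Your sketch gestures at this with ``determinism of the lens/channel data in the $y$-direction is crucial'', but the specific mechanism---almost-sure equality under deterministic postcomposition, hence stability of the conditional product---is the technical heart of the argument and deserves to be named explicitly.
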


\begin{proof}[Proof notes.]
		The most technical points lie in defining $y$-composition of $xy$-squares, proving that it is associative, and showing that $xy$-interchange holds for $xy$-squares. See Construction \ref{constr_Arena_C}.
\end{proof}


Now, we wish to extend $Arena^{\mathrm{Moore}}_{\CC}$ by adding systems, and morphisms of systems, as new kinds of morphisms, with the following requirements: in the $y$ direction, a system can only be composed with a $y$-morphism in $Arena^{\mathrm{Moore}}_{\CC}$, and, in the $x$ and $z$ directions, morphisms of systems can only be composed with other morphisms of systems.

Let $2$ denote the category with objects $0, 1$, and only one non-identity arrow $0 \rightarrow 1$. Recall that $y(2)$ is the triple category constructed from the category $2$, concentrated in the $y$-direction.

\begin{theorem}\label{theo_ArenaSys_C}
There exists an $x$-semi triple category $ArenaSys^{\mathrm{Moore}}_{\CC}$ and an ($x$-semi) triple functor $F: ArenaSys^{\mathrm{Moore}}_{\CC} \rightarrow y(2)$ such that the fiber $F^{-1}(1)$ is isomorphic to $Arena^{\mathrm{Moore}}_{\CC}$. It has the following properties:
\begin{enumerate}
	\item Objects in $F^{-1}(0)$ are deterministic morphisms $r: \widetilde{S} \rightarrow S$, also denoted ${\begin{pmatrix} \widetilde{S} \\ S  \\ \end{pmatrix}}$.
	\item The $x$-morphisms in $F^{-1}(0)$ are morphisms in the category of arrows of $\CC$.
	\item The $z$-morphisms in $F^{-1}(0)$ are morphisms in the category of arrows of $\CC_{det}$.
	\item The $y$-morphisms in the fiber $F^{-1}(0 \rightarrow 1)$ are nondeterministic lenses $(f, f^{\sharp}): {\begin{pmatrix} \widetilde{S} \\ S  \\ \end{pmatrix}} \leftrightarrows {\begin{pmatrix} a_1 \\ c_1  \\ \end{pmatrix}}$ with deterministic output map, such that the composite $S \otimes a_1 \xrightarrow{f^{\sharp}} \widetilde{S} \xrightarrow{r} S$ is equal to the projection $\pi_S$.
	
	\item The $yz$-squares from $z$-morphisms in $F^{-1}(0)$ to $z$-morphisms in $F^{-1}(1)$:	
	
	\begin{center}
	\begin{tikzcd}
		{\begin{pmatrix} \widetilde{S}_1 \\ S_1  \\ \end{pmatrix}}  \arrow[d, "{( f_{13}, f_{13}^{\sharp})}", swap, shift right] \arrow[r, "{(f_{12}, g_{12})}", shift left]\arrow[r, shift right] & {\begin{pmatrix} \widetilde{S}_2 \\ S_2  \\ \end{pmatrix}}   \arrow[d, shift right]
		\\
		{\begin{pmatrix} a_3 \\ c_3  \\ \end{pmatrix}} \arrow[u, shift right]\arrow[r, "{(f_{34}, g_{34})}", shift left] \arrow[r, shift right] & {\begin{pmatrix} a_4 \\ c_4  \\ \end{pmatrix}}\arrow[u, "{( f_{24}, f_{24}^{\sharp})}", shift right, swap]
	\end{tikzcd}
	\end{center}
	are given by the equality $f_{13}; f_{34} = f_{12}; f_{24}$, i.e. commutativity of certain squares in $\CC_{det}$. Compositions are given by concatenations of commuting squares.
	
	\item The $xz$-semi double category of the fiber $F^{-1}(0) \subseteq ArenaSys^{\mathrm{Moore}}_{\CC}$ is thin, with squares given by compatibility conditions.
	
	\item The $xy$-squares 	with boundary as below
	
	\begin{center}
		\begin{tikzcd}
			{\begin{pmatrix} \widetilde{S}_1 \\ S_1  \\ \end{pmatrix}}  \arrow[d, "{( f_{13}, f_{13}^{\sharp})}", swap, shift right] \arrow[rr, "{(f_{12}, f^{\flat}_{12})}", shift left]\arrow[rr, shift right] && {\begin{pmatrix} \widetilde{S}_2 \\ S_2  \\ \end{pmatrix}}   \arrow[d, shift right]
			\\
			{\begin{pmatrix} a_3 \\ c_3  \\ \end{pmatrix}} \arrow[u, shift right]\arrow[rr, "{(a_{34}, c_{34}, f_{34}, f^{\flat}_{34})}", shift left] \arrow[rr, shift right] && {\begin{pmatrix} a_4 \\ c_4  \\ \end{pmatrix}}\arrow[u, "{( f_{24}, f_{24}^{\sharp})}", shift right, swap]
		\end{tikzcd}
	\end{center} are Markov morphisms $s_{}:   c_1 \otimes a_3 \rightarrow   S_2 \otimes c_{34} \otimes a_{34} \otimes a_4$ such that the following equations hold:
	\begin{enumerate}
		\item $f_{12}; f_{24} = f_{13} ; f_{34}; \pi_{c_4}$
		\item $(s ; \pi_{S_2 \otimes c_{34} \otimes a_{34} \otimes a_4}; f_{24} \otimes c_{34} \otimes a_{34} \otimes a_4) = (\pi_{S_1 \otimes a_3} ; f_{13} \otimes a_3 ; f_{34}^{\flat})$
		\item $(copy_{c_1}; \sigma; f_{13}^{\sharp}; f_{12}^{\flat}) = (s; \pi_{  S_2 \otimes a_4}; f_{24}^{\sharp})$
	\end{enumerate}
	
	Compositions of $xy$-squares in $ArenaSys^{\mathrm{Moore}}_{\CC}$ are defined with similar ideas to those used for $Arena^{\mathrm{Moore}}_{\CC}$, i.e. $x$-composition uses copy-composition in $\CC$, and $y$-composition relies on conditional products.
	
	\item There is exactly one cube for each boundary.
\end{enumerate}

Moreover, consider the $x$-semi double category $\mathbb{D} = F^{-1}(0 \rightarrow 1)$, whose objects are $y$-morphisms over the arrow $0 \rightarrow 1$, whose $x$, resp. $z$, morphisms are $xy$, resp. $xz$, squares in $ArenaSys^{\mathrm{Moore}}_{\CC}$, and whose $xz$-morphisms are $xyz$-cubes in $ArenaSys^{\mathrm{Moore}}_{\CC}$. Then, there exists a symmetric monoidal structure on this $x$-semi double category $\mathbb{D}$, such that the codomain $x$-semi double functor $\mathrm{cod}: \mathbb{D} \rightarrow ArenaSys^{\mathrm{Moore}}_{\CC, xz}$ is symmetric monoidal.
\end{theorem}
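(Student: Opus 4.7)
The plan is to extend Construction \ref{constr_Arena_C} by gluing a \emph{layer $0$} onto $Arena^{\mathrm{Moore}}_{\CC}$ which contains systems-with-restrictions, together with a strictly compatible $x$-semi triple functor down to $y(2)$. First I construct the $x$-semi triple category $F^{-1}(0)$ in isolation: its objects are deterministic arrows $r : \widetilde{S} \to S$, its $x$-morphisms and $z$-morphisms are commuting squares in $\CC$ and $\CC_{det}$ respectively, and its $xz$-squares are thin, witnessed by commutativity of the corresponding $3$-dimensional diagram. All compositions are concatenation of commuting squares in $\CC$ (or $\CC_{det}$), so associativity, $y$- and $z$-unitality, and $xz$-interchange are inherited for free.

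Next I add the bridging data living above the edge $0 \to 1$. The $y$-morphisms are lenses $(f, f^{\sharp})$ with $f$ in $\CC_{det}$ and $f^{\sharp}; r = \pi_S$; both conditions are stable under $y$-composition with a deterministic lens of $Arena^{\mathrm{Moore}}_{\CC}$, so $y$-composition is well-defined. The $yz$-squares and bridging $xz$-squares are once again commutativity data, composed by pasting. The $xy$-squares are as in item~7 of the statement; their $x$-composition is copy-composition in $\CC$, and their $y$-composition with an $xy$-square of $Arena^{\mathrm{Moore}}_{\CC}$ is the conditional product of the respective $s$-data over their common ``$c$'' marginal, exactly as in Construction \ref{constr_Arena_C}. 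Finally, cubes are unique: one only checks that every compatible $6$-face boundary is consistent.

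The hard part is verifying that these compositions really assemble into an $x$-semi triple category. Three points need care: (i) associativity of $y$-composition of bridging $xy$-squares, which reduces to associativity of conditional products in $\CC$ already invoked in the proof of Theorem \ref{theo_Arena_C}, together with stability of the marginalization identity $f^{\sharp}; r = \pi_S$ under pasting; (ii) $xy$-interchange, which again follows from copy-composition, determinism of the backward part of lenses in $F^{-1}(1)$, and the universal property of conditional products; (iii) closure of bridging $xy$-squares under composition, i.e.\ that the three defining equations of item~7 pass to the conditional product. The diagrammatic calculations closely parallel those of Construction \ref{constr_Arena_C}, but require a little adaptation because the upper-right corner carries the state-space object $S_2$ rather than a tensor $c_{12} \otimes c_2$, so that equation (b) has a slightly different marginalization shape; I expect this minor reshuffling to be the only genuinely new bookkeeping step.

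For the symmetric monoidal structure on $\mathbb{D} = F^{-1}(0 \to 1)$, I would take pointwise tensor products of all data (state spaces, carriers, interfaces, lenses, bridging squares and cubes), using the symmetric monoidal structure of $\CC$ and the fact that copy-composition and conditional products are compatible with $\otimes$, as already exploited in the construction of $Arena^{\mathrm{Moore}}_{\CC}$. Strict symmetric monoidality of $\mathrm{cod} : \mathbb{D} \to ArenaSys^{\mathrm{Moore}}_{\CC, xz}$ is then immediate, since $\mathrm{cod}$ merely forgets the layer-$0$ half of the data and this forgetful operation commutes with all tensor products on the nose.
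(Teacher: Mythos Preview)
Your proposal is correct and follows essentially the same approach as the paper's Construction~\ref{constr_ArenaSys_C}: build $F^{-1}(0)$ from the arrow category of $\CC$ (resp.\ $\CC_{det}$), add bridging lenses and $xy$-squares over $0\to 1$, define $y$-composition via conditional products, and verify well-definedness, associativity, and interchange by adapting the corresponding arguments from Construction~\ref{constr_Arena_C}. One small point the paper makes explicit that you gloss over: $x$-composition of bridging $xy$-squares is genuinely \emph{mixed}---the top $x$-morphisms in $F^{-1}(0)$ compose by ordinary composition (no copy), while the bottom ones in $Arena^{\mathrm{Moore}}_{\CC}$ use copy-composition---so the formula for $s;_x t$ copies $S_2$ and $a_5$ but not the full state-side data; this is exactly the ``minor reshuffling'' you anticipate, and the paper confirms (Proposition~\ref{prop_interchange_nondet_systems}) that interchange goes through because only the \emph{output} maps of the top lenses, not their update maps, need to be deterministic.
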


\begin{proof}[Proof notes.]
	As above, the most technical points lie in defining $y$-composition of $xy$-squares, proving associativity, and interchange. See Construction \ref{constr_ArenaSys_C}.
\end{proof}

\subsection{Dealing with trajectories: the systems theory $T^{\mathrm{Moore}}(\CC, \GG)$}

Our goal is to define, for each Directed Acyclic Graph $\GG$ (representing a notion of time) and each Markov category with conditionals $\CC$, a systems theory $T^{\mathrm{Moore}}(\CC, \GG)$, using the triple categories $ArenaSys^{\mathrm{Moore}}_{\CC} \supseteq  Arena^{\mathrm{Moore}}_{\CC}$.

\begin{notation}\label{notation_graphs}
Let $\GG$ be a directed acyclic graph. 

\begin{itemize}
	\item Abusing notations slightly, the free category generated by the graph $\GG$ shall be denoted by $\GG$ as well.
	\item Consider the graph $Ar(\GG)$ whose vertices are edges in $\GG$, where there is exactly one edge from $a \xrightarrow{e} b$ to $b' \xrightarrow{e'} c$ if $b=b'$, and no edge otherwise. As above, the graph and the free category it generates will be denoted $Ar(\GG)$.

	Note that there are functors $dom, cod: Ar(\GG) \rightarrow \GG$ mapping an edge in $\GG$ to its source, resp. target, and sending an edge $(a \rightarrow b) \Rightarrow (b \rightarrow c)$ in $Ar(\GG)$ to the edge $a \rightarrow b$, resp. $b \rightarrow c$.

	\item Taking duals of graphs, we also have a functor $cod: Ar(\GG^{op}) \rightarrow \GG^{op}$, mapping an edge of $\GG^{op}$ to its target i.e. to the source of the corresponding edge in $\GG$, and sending an edge $(c \rightsquigarrow b) \Rightarrow (b \rightsquigarrow a)$ in $Ar(\GG^{op})$, where $a \rightarrow b$ and $b \rightarrow c$ are edges in $\GG$, to the edge $b \rightsquigarrow a$ in $\GG^{op}$. Similarly, we have a functor $dom: Ar(\GG^{op}) \rightarrow \GG^{op}$.
\end{itemize}  

\end{notation}

For instance, we are interested in the example where the vertices of $\GG$ are natural numbers, and there is one edge $n \rightarrow n+1$ for all $n$.

Recall that $2$ denotes the category with two objects $0, 1$ and one nonidentity arrow $0 \rightarrow 1$, and that, any category $\mathcal{A}$ induces a triple category $z(\mathcal{A})$ concentrated in the $z$ direction.

\begin{theorem}\label{theo_main}
For each directed (acyclic) graph $\GG$ and each Markov category with conditionals $\CC$, there exists a semimodule of systems\footnote{See Definition \ref{defi_semi_modules_systems}.} $T^{\mathrm{Moore}}(\CC, \GG)$ containing \emph{at least} the systems and trajectories described in Subsection \ref{motiv_trajectories}.

\end{theorem}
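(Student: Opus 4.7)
The plan is to build $T^{\mathrm{Moore}}(\CC, \GG)$ as (essentially) the internal hom semi triple category $T' := [z(\GG^{op}), ArenaSys^{\mathrm{Moore}}_{\CC}]$, where $z(\GG^{op})$ is the semi triple category obtained by placing the free category on the graph $\GG^{op}$ in the $z$-direction. The $z$-direction is the right target because $z$-morphisms in $ArenaSys^{\mathrm{Moore}}_{\CC}$ are exactly the deterministic time-restriction/marginalisation maps (Theorems \ref{theo_Arena_C} and \ref{theo_ArenaSys_C}), and using $\GG^{op}$ matches the convention that an edge $n \to m$ in $\GG$ induces a restriction from the later interface down to the earlier one. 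Such a $T'$ exists by the fact stated after Definition \ref{defi_semi_triple_cat}.

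Unpacking $T'$: its objects are $\GG^{op}$-diagrams of pairs $(a_n, c_n)$ in $\CC$ with deterministic restriction morphisms, recovering the $S(n), I(n), O(n)$ data of Subsection \ref{motiv_trajectories}; its $y$-morphisms are $\GG^{op}$-families of deterministic lenses together with commuting $yz$-squares in $\CC_{det}$ witnessing naturality; its $x$-morphisms and $xy$-squares are described similarly using $xz$-squares and $xyz$-cubes, where item 8 of Theorem \ref{theo_ArenaSys_C} makes the $xyz$-compatibility automatic. The triple functor $F : ArenaSys^{\mathrm{Moore}}_{\CC} \to y(2)$ of Theorem \ref{theo_ArenaSys_C} induces, by functoriality of $[z(\GG^{op}), -]$, a semi triple functor $\widetilde F : T' \to [z(\GG^{op}), y(2)] \cong y(2)$, the last isomorphism because $y(2)$ has no nontrivial $z$-data.

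Imitating the example before Definition \ref{defi_semi_modules_systems}, one then reads off the semimodule $T^{\mathrm{Moore}}(\CC, \GG)$: the symmetric monoidal $x$-semi double category $\mathbb{I}$ is the $xy$-part of the fiber $\widetilde F^{-1}(1)$; the semicategory $\mathsf{Car}(\mathsf{S})$ has as objects the $\GG^{op}$-families of ``system'' $y$-morphisms over $0 \to 1$ (item 4 of Theorem \ref{theo_ArenaSys_C}) and as morphisms the $\GG^{op}$-families of $xy$-squares of item 7; the interface semifunctor is $y$-codomain, and the loose action is $y$-composition. The symmetric monoidal structures on $\mathbb{I}$ and $\mathsf{Car}(\mathsf{S})$ are the pointwise lifts of those from Theorems \ref{theo_Arena_C} and \ref{theo_ArenaSys_C}. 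One then verifies that, in the case $\GG = (\mathbb{N}, n \to n+1)$ and $\CC = BorelStoch$, a system in $\mathsf{Car}(\mathsf{S})$ is a restriction-compatible family $(expose^n, update^n)$ as in Subsection \ref{motiv_trajectories}, the clock $Time$ is the constant functor at the trivial system, and a trajectory with source $Time$ unpacks to the data $p^n, \phi^n, s^n$ satisfying the three conditions of item 7 of Theorem \ref{theo_ArenaSys_C}.

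The main obstacle is verifying that the conditional-product-based $y$-composition in $ArenaSys^{\mathrm{Moore}}_{\CC}$ is compatible with the deterministic $z$-restriction morphisms, so that $y$-composition descends to $T'$. Concretely, marginalising (via a restriction $z$-morphism) through a conditional product has to recover the conditional product of the marginals, which amounts to checking that deterministic morphisms in $\CC_{det}$ commute with $copy$ in precisely the way demanded by the uniqueness clause of the conditional product. This is exactly the reason the framework was set up to force $y$-morphisms (lenses) and $z$-morphisms (restrictions) to be deterministic. Once this compatibility is secured, associativity and $xy$-interchange in $T'$ follow formally from their analogues in $ArenaSys^{\mathrm{Moore}}_{\CC}$, and the whole construction reduces to pointwise bookkeeping over $\GG^{op}$.
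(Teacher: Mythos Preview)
Your approach is broadly on the right track—build the semimodule from a functor-category construction targeting $ArenaSys^{\mathrm{Moore}}_{\CC}$ in the $z$-direction—but two points deserve correction.

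First, the paper indexes by $z(Ar(\GG^{op}))$, the \emph{arrow} category of $\GG^{op}$, not $z(\GG^{op})$ (Construction~\ref{constr_Arena_C^G}), and then passes to a specific sub-double-category. The reason for using arrows is to encode the time-shift of Subsection~\ref{motiv_trajectories}: an edge $n \to n+1$ of $\GG$ is sent to the interface $\begin{pmatrix} a(n+1) \\ c(n) \end{pmatrix}$ (via $dom$ and $cod$ acting differently on the two components) and to the state object $\begin{pmatrix} \widetilde{S}(n+1) \\ \widetilde{S}(n) \end{pmatrix}$, so that a system's $f^{\sharp}$ is exactly $update^n : S(n) \otimes I(n+1) \to S(n+1)$. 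With your vertex-indexed $\GG^{op}$-diagrams you could still accommodate these examples, but only by putting in the shift by hand (choosing $a_n = I(n{+}1)$, $\widetilde{S}_n = S(n{+}1)$); your claim that the objects simply ``recover the $S(n), I(n), O(n)$ data'' elides this, and without it the update map has the wrong signature.

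Second, your final paragraph misidentifies the obstacle. Because there is exactly one $xyz$-cube per boundary in $ArenaSys^{\mathrm{Moore}}_{\CC}$ (item~8 of Theorem~\ref{theo_ArenaSys_C}), $y$-composition of $xy$-natural transformations in the functor category is well-defined by the general fact following Definition~\ref{defi_semi_triple_cat}; nothing has to ``descend'', and no compatibility of conditional products with $z$-restrictions needs to be checked to get the semimodule structure. Indeed, the paper does \emph{not} establish the conditional-product/$z$-restriction compatibility you describe: it explicitly flags the lack of naturality in $n$ of the maps $s^n$ as a residual limitation (discussion point~\ref{discussion_too_lax}), left to future work rather than resolved here.
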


Let us now prove Theorem \ref{theo_main}, by constructing the systems theory $T^{\mathrm{Moore}}(\CC, \GG)$. The idea is to construct the $x$-semi double category $ArenaSys^{\mathrm{Moore}}_{\CC}(\GG)$ as a sub-double category of the $x$-semi double category $[z(Ar(\GG^{op}));ArenaSys^{\mathrm{Moore}}_{\CC}]_{xy}$ of $x$-semi triple functors, $x$-natural transformations, $y$-natural transformations, and $xy$-natural transformations. The details can be found in Appendix \ref{appendix_Arena_Moore} and \ref{appendix_ArenaSys_Moore}.

\begin{construction}\label{constr_Arena_C^G}
	We construct an $x$-semi double category $ArenaSys^{\mathrm{Moore}}_{\CC}(\GG)$ with an $x$-semi  double functor $H: ArenaSys^{\mathrm{Moore}}_{\CC}(\GG) \rightarrow y(2)$, as follows:
	
	\begin{enumerate}
		\item The objects in $H^{-1}(0)$ are functors $\widetilde{S}: \GG^{op} \rightarrow \CC_{det}$. Note that any such $\widetilde{S}$ induces a triple functor $z(Ar(\GG^{op})) \rightarrow F^{-1}(0) \subseteq ArenaSys^{\mathrm{Moore}}_{\CC}$, mapping an edge $a \xrightarrow{e} b$ in $\GG$ to the object ${\begin{pmatrix} \widetilde{S}(b) \\ \widetilde{S}(a)  \\ \end{pmatrix}}$ with structure map $\widetilde{S}(e^{op}) : \widetilde{S}(b) \rightarrow \widetilde{S}(a)$, and mapping an arrow $(b \xrightarrow{e} c) \Rightarrow (a \xrightarrow{e'} b)$ in $Ar(\GG^{op})$ to the $z$-morphism in $ArenaSys^{\mathrm{Moore}}_{\CC}$ given by the deterministic maps $\widetilde{S}(e^{op})$, $\widetilde{S}({e'}^{op})$.
		We shall use the same notations for the functors $\GG^{op} \rightarrow \CC_{det}$ and for the triple functors $z(Ar(\GG^{op})) \rightarrow Arena^{\mathrm{Moore}}_{\CC}$.
		
		\item The objects in $H^{-1}(1)$ are pairs of functors $(a, c)$, where $a,c: \GG^{op} \rightarrow \CC_{det}$. They induce $x$-semi triple functors $z(Ar(\GG^{op})) \rightarrow Arena^{\mathrm{Moore}}_{\CC}$ via pulling back along the triple functors $z(cod), z(dom): z(Ar(\GG^{op})) \rightarrow z(\GG^{op})$. More explicitly, the $x$-semi triple functor induced by a given $(a, c)$ is ${\begin{pmatrix} z(a \circ dom) \\ z(c \circ cod)  \\ \end{pmatrix}}$: each arrow $w \rightsquigarrow v$ in $\GG^{op}$ is mapped to ${\begin{pmatrix} a(w) \\ c(v)  \\ \end{pmatrix}} \in Arena^{\mathrm{Moore}}_{\CC}$, and each arrow $(w \rightsquigarrow v) \Rightarrow (v \rightsquigarrow u)$ in $Ar(\GG^{op})$ is mapped to the $z$ morphism ${\begin{pmatrix} a(w) \\ c(v)  \\ \end{pmatrix}} \rightrightarrows {\begin{pmatrix} a(v) \\ c(u)  \\ \end{pmatrix}}$ in $Arena^{\mathrm{Moore}}_{\CC}$ obtained by applying the functors $a$ and $c$ to the arrows $w \rightsquigarrow v$ and $v \rightsquigarrow u$ respectively.
		We shall use the same notations for the pairs of functors $\GG^{op} \rightarrow \CC_{det}$ and for the triple functors $z(Ar(\GG^{op})) \rightarrow Arena^{\mathrm{Moore}}_{\CC}$ that they induce.
		
		\item Horizontal morphisms $\widetilde{S}_1 \rightarrow \widetilde{S}_2$, or $x$-morphisms, in the fiber $H^{-1}(0)$, are natural transformations $\widetilde{S}_1 \Rightarrow \widetilde{S}_2$ between functors from $\GG^{op}$ to $\CC$. These induce $x$-natural transformations between the $x$-semi triple functors $\widetilde{S}_1, \widetilde{S}_2: z(Ar(\GG^{op})) \rightarrow ArenaSys^{\mathrm{Moore}}_{\CC}$.
		
		\item In the fiber $H^{-1}(1)$, horizontal morphisms, or $x$-morphisms ${\begin{pmatrix} a_1 \\ c_1  \\ \end{pmatrix}} \rightarrow {\begin{pmatrix} a_2 \\ c_2  \\ \end{pmatrix}}$, are given by objects ${\begin{pmatrix} a_{12} \\ c_{12}  \\ \end{pmatrix}}$, along with pairs of natural transformations $a_1 \Rightarrow a_{12} \otimes a_2$, $c_1 \Rightarrow c_{12} \otimes c_2$, between functors $\GG^{op} \rightarrow \CC$. They induce $x$-natural transformations between $x$-semi triple functors $z(Ar(\GG^{op})) \rightarrow Arena^{\mathrm{Moore}}_{\CC}$.

		\item Vertical morphisms, or $y$-morphisms, are $y$-natural transformations. Let us be more explicit. In the fiber $H^{-1}(0)$, there are only identity $y$-morphisms. In the fiber $H^{-1}(1)$, a $y$-morphism ${\begin{pmatrix} a_1 \\ c_1  \\ \end{pmatrix}} \leftrightarrows {\begin{pmatrix} a_2 \\ c_2  \\ \end{pmatrix}}$ is a $y$-natural transformation between $x$-semi triple functors $z(Ar(\GG^{op})) \rightarrow ArenaSys^{\mathrm{Moore}}_{\CC}$, whose bottom part is induced by a natural transformation $c_2 \Rightarrow c_1$ in $[\GG^{op}; \, \CC_{det}]$. In other words, the set $Hom^{y}_{ArenaSys^{\mathrm{Moore}}_{\CC}(\GG)}({\begin{pmatrix} a_1 \\ c_1  \\ \end{pmatrix}}; {\begin{pmatrix} a_2 \\ c_2  \\ \end{pmatrix}})$ is equal to	$$Hom^{y}_{[z(Ar(\GG^{op})) ; Arena^{\mathrm{Moore}}_{\CC}]}({\begin{pmatrix} a_1 \\ c_1  \\ \end{pmatrix}}; {\begin{pmatrix} a_2 \\ c_2  \\ \end{pmatrix}}) \times_{Hom_{[Ar(\GG^{op}) ;\, \CC_{det}]}(c_1 ; c_2)} Hom_{[\GG^{op} ; \, \CC_{det}]}(c_1 ; c_2).$$

		Finally, in the fiber $H^{-1}(0 \rightarrow 1)$, a $y$-morphism $\widetilde{S} \leftrightarrows {\begin{pmatrix} a_1 \\ c_1  \\ \end{pmatrix}}$ is given by a $y$-natural transformation $\widetilde{S} \rightarrow {\begin{pmatrix} a_1 \\ c_1  \\ \end{pmatrix}}$ between $x$-semi triple functors $z(Ar(\GG^{op})) \rightarrow ArenaSys^{\mathrm{Moore}}_{\CC}$ and a natural transformation $f: \widetilde{S} \Rightarrow c_1$ in $[\GG^{op}; \, \CC_{det}]$, such that the bottom part of the $y$-natural transformation $\widetilde{S} \rightarrow {\begin{pmatrix} a_1 \\ c_1  \\ \end{pmatrix}}$ is induced by $f$. In other words, the set $Hom^{y}_{ArenaSys^{\mathrm{Moore}}_{\CC}(\GG)}(\widetilde{S}; {\begin{pmatrix} a_1 \\ c_1  \\ \end{pmatrix}})$ is equal to  the fiber product $$Hom^{y}_{[z(Ar(\GG^{op})) ; ArenaSys^{\mathrm{Moore}}_{\CC}]}(\widetilde{S}; {\begin{pmatrix} a_1 \\ c_1  \\ \end{pmatrix}}) \times_{Hom_{[Ar(\GG^{op}) ;\, \CC_{det}]}(\widetilde{S} ; c_1)} Hom_{[\GG^{op} ; \, \CC_{det}]}(\widetilde{S} ; c_1).$$
		
		Composition is defined as composition of $y$-natural transformations.
		%
		%
		
		\item Squares are $xy$-natural transformations in $[z(Ar(\GG^{op})), ArenaSys^{\mathrm{Moore}}_{\CC}]$. The only $xy$-squares in $H^{-1}(0)$ are identity squares of $x$-morphisms. Horizontal composition of squares is given by $x$-composition of $xy$-natural transformations, i.e. pointwise $x$-composition of $xy$-squares, and of $xyz$-cubes, in $ArenaSys^{\mathrm{Moore}}_{\CC}$. Similarly, vertical composition of squares is given by $y$-composition of $xy$-natural transformations, i.e. pointwise $y$-composition of $xy$-squares, and of $xyz$-cubes, in $ArenaSys^{\mathrm{Moore}}_{\CC}$.

	\end{enumerate}	
\end{construction}

\begin{definition}
	The systems theory $T^{\mathrm{Moore}}(\CC, \GG)$ is defined as $(ArenaSys^{\mathrm{Moore}}_{\CC}(\GG), H)$, where $H: ArenaSys^{\mathrm{Moore}}_{\CC}(\GG) \rightarrow y(2)$ is the ()$x$-semi) double functor defined in Construction \ref{constr_Arena_C^G}.
\end{definition}

\begin{remark}
	\begin{enumerate}
		\item One key difference with the theories defined in \cite[Section 3.5]{DCST-book} is that squares/morphisms of systems \emph{contain data}, namely joint distributions, as opposed to only \emph{witnessing compatibility conditions} between data in their boundaries. While we feel this is justified by the context and the end result, this change leads to more technical computations, such as managing interchange of squares. 
		\item One can check that the resulting double category is not spanlike (see \cite[Definition 5.3.1.5]{DCST-book}) in the examples of interest.
		\item We only get $x$-semi double categories, due to the absence of strict identities for copy-composition.

	\end{enumerate}
\end{remark}

\section{Discussion}\label{section_discussion}

	
	\begin{enumerate}
		\item\label{discussion_too_lax} Our construction implements \emph{almost all the constraints} described in Subsection \ref{motiv_trajectories}. The only missing point is naturality in $n$ of the maps $s^n: * \rightarrow  S( n) \otimes  I(n+1)$, i.e. compatibility with time-restriction. We conjecture that this condition can be implemented \emph{a posteriori} using a more restrictive notion for cubes in $ArenaSys^{\mathrm{Moore}}_{\CC}$; finding suitable hypotheses, or sufficient conditions to ensure such compatibility, for instance a version of the Markov property, remains future work. Note that the issue here is that \emph{the theory allows too many trajectories and behaviors to be defined}; it seems likely that the objects actually considered in practice always satisfy the naturality conditions. 
		
		\item \label{discussion_cond_indep} The use of conditional products for vertical composition of squares implements a \emph{modelling hypothesis}: Consider the case where $\GG = \mathbb{N}$, with edges $n \rightarrow n+1$, and a system ${\begin{pmatrix} S \\ S \\ \end{pmatrix}} \leftrightarrows {\begin{pmatrix} I_1 \\ O_1  \\ \end{pmatrix}}$, a (deterministic) lens $ {\begin{pmatrix} I_1 \\ O_1 \\ \end{pmatrix}} \leftrightarrows {\begin{pmatrix} I_2 \\ O_2  \\ \end{pmatrix}}$, a trajectory of the system, and compatible charts for the wiring of interfaces, i.e. nondeterministic tuples/generalized elements as below, for all $n$:
		
		\begin{itemize}
			\item 	$(s(n), i_1(n+1)) \in S(n) \otimes I_1(n+1)$
			
			\item 	 $(o_1(n), i_1(n+1)) \in O_1(n) \otimes I_1(n+1)$
			
			\item 	 $(o_2(n), i_2(n+1)) \in O_2(n) \otimes I_2(n+1)$
			
			\item $(o_1(n), i_2(n+1)) \in O_1(n) \otimes I_2(n+1)$ 
		\end{itemize}
		
		These are assumed to be compatible with the lens ${\begin{pmatrix} I_1(n+1) \\ O_1(n) \\ \end{pmatrix}} \leftrightarrows {\begin{pmatrix} I_2(n+1) \\ O_2(n)  \\ \end{pmatrix}}$, and with the system.
		
		Then, \emph{the} trajectory of the composite system $  {\begin{pmatrix} S \\ S \\ \end{pmatrix}} \leftrightarrows {\begin{pmatrix} I_2 \\ O_2  \\ \end{pmatrix}}$ is \emph{defined} so that, for all $n$, the states $s(n) \in S(n)$ is \emph{independent} from the nondeterministic inputs $i_2(n+1) \in I_2(n+1)$ \emph{conditional on the tuple} $(o_1(n), i_1(n+1)) \in O_1(n) \otimes I_1(n+1)$.	In other words, it is assumed that, in this context, the only mutual information between $s(n)$ and $i_2(n+1)$ lies in $(o_1(n), i_1(n+1))$. This \emph{modelling assumption} enables us to create joint distributions for composite trajectories, in a doubly functorial way, since we get a double category in the end. We believe this assumption is realistic.

		As explained in the previous point, at this level of generality, the construction does not guarantee coherence in time/compatibility with restrictions of the joint distributions on $S(n) \otimes I_2(n+1)$, even assuming coherence for the joint distributions on $S(n) \otimes I_1(n+1)$ and $O_1(n) otimes I_2(n+1)$. One can actually find simple counterexamples, for instance if some, but not all, of the $I_1(n+1)$ and $O_1(n)$ are terminal objects and $S$, $I_2$ are constant functors.
		
		\item 	Using parametric deterministic lenses enables us to represent nondeterministic channels, by explicitly naming the sources of nondeterminism for the update maps. \emph{One might want to allow channels to be represented as nondeterministic Markov morphisms, just like systems are; there are technical issues to deal with}, because one then has to find a way of managing all the implicit correlations/joint distributions at play. In fact, naively extending our definitions to allow nondeterministic lenses breaks both associativity of $y$-composition for $xy$-squares and $xy$-interchange. 
		
		\item Using Directed Acyclic graphs allows us to move beyond discrete-time: for instance, using the poset of nonnegative real numbers, one can represent some notion of continuous-time behaviors. 
		A key observation is that topological spaces of continuous or smooth paths into well-behaved spaces actually yield standard Borel spaces. 
		See Appendix \ref{appendix_continuous_time}.

		\item The constructions are functorial in the graph $\GG$, and also in the Markov category with conditionals $\CC$; regarding the latter point, morphisms between Markov categories with conditionals are (strict) monoidal functors that preserve copying, discarding, and conditional products. This functoriality could be used to represent discretization (of time) and coarsening (of the notion of nondeterminism), respectively. We leave the exposition of these facts to future work.
	\end{enumerate}


\section{Summary and outlook}\label{section_outlook}
From a Markov category with conditionals $\CC$, we constructed $x$-semi triple categories $Arena^{\mathrm{Moore}}_{\CC}$ and $ArenaSys^{\mathrm{Moore}}_{\CC}$, whose objects are interfaces, resp. interfaces and state-objects of systems (see Theorems \ref{theo_Arena_C} and \ref{theo_ArenaSys_C}). In these constructions, the $z$ direction is used to deal with (deterministic) time-restriction maps, and $y$-composition of $xy$-squares uses conditional products to \emph{create joint distributions}. From these $x$-semi triple categories, we then built double systems theories $T^{\mathrm{Moore}}(\CC, \GG)$, i.e. semimodules of systems, for each Directed Acyclic Graph $\GG$, where $\GG$ is meant to represent a notion of time (Theorem \ref{theo_main}). While there are still a few technical questions to answer, building the ($x$-semi) triple categories $Arena^{\mathrm{Moore}}_{\CC} \subseteq ArenaSys^{\mathrm{Moore}}_{\CC}$, or variants thereof, looks like an important step.

Several research directions seem to emerge from our work.
\begin{enumerate}
	\item Finding sufficient conditions to ensure time-coherence of composite trajectories, and hypotheses/constraints that are natural, from the point of view of modelling, on trajectories of systems or on cubes in $ArenaSys^{\mathrm{Moore}}_{\CC}$, would help \emph{narrow down the class of objects} and refine the construction. 
	
	Another possibility is the existence of a deeper interpretation of some time-incoherence phenomena, possibly having to do with statistical inference, approximation procedures given finite-dimensional data, etc.
	
	\item\label{outlook_trajectories_tensors} Along the same lines, one can wish to \emph{define/construct} joint trajectories of subsystems of a system, given trajectories of each subsystem with compatible behaviors at the interface, using conditional products again. Studying the properties of this operation using the language of double categorical systems theory seems relevant to the theory of compositional modelling\footnote{One possible application is providing sound baseline assumptions for combining nondeterministic models; this issue does not appear in purely deterministic contexts.}.
	
	\item Finding a higher-level/more abstract description of the constructions could help internalize them, or adapt them more efficiently to other contexts, e.g. enriched ones.
	
	\item The theory built here is directed. One could look for undirected variants, for instance comparing this work with the treatment of Bayes nets and open factor graphs in \cite{CopyCompSmithe}. 
	
	\item Developing a \emph{dependent version} of our theory, where the inputs depend on the output of the system, would be interesting.
	
	\item Here, we used Markov categories with conditionals to model uncertainty. One might want to also allow exact conditioning, for instance in order to represent (evidential) decision theory problems. One framework for dealing with these questions is \emph{partial Markov categories} \cite{PartialMarkovCats}. It would be of interest to see if our constructions can be adapted to that context, or if the language of categorical systems theory can be useful for decision theory.
	
	\item In cartesian closed contexts, one might be able to deal with trajectories more directly, by manipulating nondeterministic infinite sequences. For instance, there is a strong affine commutative monad on the cartesian closed category of Quasi-Borel Spaces \cite[Theorem 21 and Proposition 22]{HigherOrderQBS}, which yields a Markov category whose deterministic subcategory is cartesian closed. Unfortunately, the Markov category of Quasi-Borel spaces does not have conditionals; thus adapting our work to this setting does not seem straightforward.
\end{enumerate}

\textbf{Acknowledgements} 

Thanks to David Jaz Myers for much appreciated feedback on previous versions, useful comments and suggestions. Thanks to Owen Lynch for suggesting using graphs to deal with time. Thanks to Matteo Capucci for insightful remarks.

\printbibliography[
heading=bibintoc,
title={References}
]

\appendix
\appendices

\section{Computations in Markov Categories}

The following Lemma will be used for some of our computations.

	\begin{lemma}\label{lemma_CP_and_regen}
		Assume that $\CC$ has conditionals. Let $X$, $A$, $B$, $C$ be objects, let $\phi: X \rightarrow A \otimes B$, $\psi: X \rightarrow B \otimes C$ be maps with the same marginal on $B$. Let $f: A \otimes B \rightarrow A \otimes B$ be a map such that the equality $f=id_{A \otimes B}$ holds $\phi$-almost surely, i.e. $(\phi; copy_{A \otimes B}; f \otimes A \otimes B) = (\phi; copy_{A \otimes B})$. Then we have $(\phi \otimes_B \psi; f \otimes C) = \phi \otimes_B \psi$.
	\end{lemma}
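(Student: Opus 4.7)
The strategy is to exploit the fact that $\phi \otimes_B \psi$ is, by construction, an extension of its $A \otimes B$-marginal $\phi$ by the extra factor $C$, and to transfer the $\phi$-almost-sure identity property of $f$ to this extension. Intuitively, the $\phi$-a.s.\ hypothesis says that, among $(a,b)$-pairs produced by $\phi$, applying $f$ is indistinguishable from the identity; since the $A \otimes B$-part of $\phi \otimes_B \psi$ is produced exactly by $\phi$, applying $f \otimes C$ at the end should do nothing.

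To implement this, I would use that $\CC$ has conditionals to pick a conditional of the conditional product over $A \otimes B$: let $\pi : X \otimes A \otimes B \to C$ satisfy
\[
\phi \otimes_B \psi \;=\; copy_X ; (\phi \otimes X) ; (copy_{A \otimes B} \otimes X) ; ((A \otimes B) \otimes swap_{A \otimes B, X}) ; ((A \otimes B) \otimes \pi).
\]
This uses only that $\phi$ is the $A \otimes B$-marginal of $\phi \otimes_B \psi$, which is a defining property of $\otimes_B$.

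Next, I would post-compose this expression with $f \otimes C$ and push $f \otimes id_C$ leftward. By bifunctoriality of $\otimes$, $f \otimes C$ commutes past $(A \otimes B) \otimes \pi$ (since $f$ and $\pi$ act on disjoint tensor components) and past the swap. It therefore meets the copy as $f \otimes (A \otimes B) \otimes X$ immediately after $copy_{A \otimes B} \otimes X$. At that point the relevant prefix collapses to a tensor of the form $(\phi ; copy_{A \otimes B} ; f \otimes (A \otimes B)) \otimes X$, and the $\phi$-a.s.\ hypothesis replaces this with $(\phi ; copy_{A \otimes B}) \otimes X$. Reassembling gives back the original expression for $\phi \otimes_B \psi$, proving the equality.

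The only real obstacle is the bookkeeping of tensor positions through the copy and swap; there is no conceptual subtlety, and no further facts about conditional products are needed beyond the marginal property. In particular, the argument is really a special case of the standard Markov-categorical fact that, when conditionals exist, $\phi$-a.s.\ equalities lift from $\phi$ to any extension of $\phi$.
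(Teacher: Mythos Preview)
Your argument is correct and is essentially the paper's own proof: both express $\phi \otimes_B \psi$ so that (a copy of) $\phi;copy_{A\otimes B}$ appears as a prefix, slide $f$ back through the remaining tensor factors by bifunctoriality, and invoke the $\phi$-almost-sure hypothesis. The only cosmetic difference is that the paper uses the conditional $\psi|B$ from the definition of the conditional product, whereas you take an arbitrary conditional $\pi$ of $\phi\otimes_B\psi$ over $A\otimes B$; either choice makes the same sliding argument go through.
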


	\begin{proof}
		It suffices to post compose the equality $(\phi; copy_{A \otimes B}; f \otimes A \otimes B) = (\phi; copy_{A \otimes B})$ with the map $A \otimes B \otimes del_A \otimes (\psi|B ; del_B): A \otimes B \otimes A \otimes B \rightarrow A \otimes B \otimes C$, and use counitality of $del_A$ with respect to $copy$. 
	\end{proof}

	\begin{remark}
		There is a symmetric statement: if we are given $g: B \otimes C \rightarrow B \otimes C$ such that $id_{B \otimes C} = g$ holds $\psi$-almost surely, then we have $(\phi\otimes_B \psi; A \otimes g) = \phi \otimes_B \psi$.
	\end{remark}

	We shall apply Lemma \ref{lemma_CP_and_regen} in the following context:

	\begin{lemma}\label{lemma_det_almost_sure_equality}
		Let $X$, $Y$, $Z$ be objects. Let $\chi: X \rightarrow Y$ be a morphism, and $d: Y \rightarrow Z$ be a deterministic morphism. Let $\phi: X \rightarrow Y \otimes Z$ be the composite $\chi; X \otimes copy_Y; X \otimes Y \otimes d$. Also, let $f: Y \otimes Z \rightarrow Y \otimes Z$ denote the map $del_Z; copy_Y; Y \otimes d$.

		Then, the equality $f=id_{Y \otimes Z}$ holds $\phi$-almost surely.
	\end{lemma}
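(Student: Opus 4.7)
The plan is to factor $\phi$ through the deterministic auxiliary morphism $\widehat{d} := copy_Y; (Y \otimes d): Y \to Y \otimes Z$, so that $\phi = \chi; \widehat{d}$. The key feature of $\widehat{d}$ is that it is a composite of structure maps and the deterministic morphism $Y \otimes d$, hence itself deterministic. Since the desired almost-sure equality
\[ \phi; copy_{Y \otimes Z}; (f \otimes Y \otimes Z) = \phi; copy_{Y \otimes Z} \]
is stable under precomposition with $\chi$, it suffices to establish the same equation with $\widehat{d}$ in place of $\phi$. This reduction is essential: $\phi$ itself need not be deterministic (as $\chi$ is arbitrary), so one cannot invoke naturality of copy directly on $\phi$.

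Now, using naturality of copying on deterministic morphisms, one rewrites $\widehat{d}; copy_{Y \otimes Z} = copy_Y; (\widehat{d} \otimes \widehat{d})$, so that applying $f \otimes Y \otimes Z$ to the right-hand side yields $copy_Y; ((\widehat{d}; f) \otimes \widehat{d})$. Thus the reduced identity collapses to the strict equation $\widehat{d}; f = \widehat{d}$.

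The final equation is a direct diagrammatic computation. Expanding,
\[ \widehat{d}; f = copy_Y; (Y \otimes d); (Y \otimes del_Z); copy_Y; (Y \otimes d), \]
and naturality of the deletion maps gives $d; del_Z = del_Y$, so the middle portion rewrites as $copy_Y; (Y \otimes del_Y)$, which equals $id_Y$ by counitality of the comonoid $(Y, copy_Y, del_Y)$. What remains is $copy_Y; (Y \otimes d) = \widehat{d}$, as required. The principal subtlety is the first step, namely spotting that one should factor through the deterministic part; once that observation is made, the rest is routine comonoid bookkeeping.
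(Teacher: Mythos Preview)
Your proof is correct and follows essentially the same approach as the paper. The paper presents the argument as a direct string-diagram computation invoking determinism of $d$, (co)associativity and (co)commutativity of $copy$, and counitality of $del$; your version packages the first two ingredients into the single observation that $\widehat{d} = copy_Y;(Y\otimes d)$ is deterministic, which is a slightly cleaner way to organize the same calculation.
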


	\begin{proof}
		The proof uses determinism of $d$, associativity and commutativity of $copy$, and counitality of $del$:

\ctikzfig{lemma_almost_sure_equ_0}
\ctikzfig{lemma_almost_sure_equ_1}

	\end{proof}

\section{Triple categories}

If we are working in a (strict) double category, we might let $\cdot|\cdot$ denote composition in one direction, and $\frac{\,\,\cdot\,\,}{\,\,\cdot\,\,}$, composition in the other direction.
For triple categories, we shall also use the notation $;_x$, to denote $x$-composition of $x$-morphisms, or $xy$- or $xz$-squares, or $xyz$-cubes. Similarly, let $;_y$, resp. $;_z$, denote composition in the $y$-, resp. $z$-, direction.

\begin{definition}\label{defi_nat_transfo_full}

\begin{itemize}
	\item Let $F, G: \Cc \rightarrow \Dd$ be triple functors. An $x$-natural transformation $\alpha: F \rightarrow G$ is given by the following data and conditions:

\begin{enumerate}
	\item It maps objects $c$ of $\mathbb{C}$ to $x$-morphisms $\alpha(c): F(c) \rightarrow G(c)$, naturally in the $x$ direction, i.e. $\alpha(c);_x G(f) = F(f);_x \alpha(d)$ if $f: c \rightarrow d$.
	\item It maps $y$ morphisms, resp. $z$ morphisms, of $\mathbb{C}$ to $xy$ morphisms, resp. $xz$ morphisms, of $\mathbb{D}$, with prescribed boundaries, functorially in the $y$ direction, resp. in the $z$ direction, with the usual naturality condition for $xy$-squares of $\mathbb{C}$, resp. $xz$-squares of $\mathbb{C}$.
	
	\[\begin{tikzcd}
		{F(a)} && {G(a)} \\
		& {\alpha(f)} \\
		{F(b)} && {G(b)}
		\arrow["{\alpha(a)}", from=1-1, to=1-3]
		\arrow["{F(f)}"', from=1-1, to=3-1]
		\arrow["{G(f)}", from=1-3, to=3-3]
		\arrow["{\alpha(b)}"', from=3-1, to=3-3]
	\end{tikzcd}\]

	Functoriality in the $y$ direction means $\alpha(\frac{f}{g}) = \frac{\alpha(f)}{\alpha(g)}$, for all $y$-composable $y$-morphisms $f$, $g$, where $\frac{}{}$ denotes $y$-composition, along with the equality $\alpha(id_a)=id_{\alpha(a)}$ for all objects $a$.

	For each $xy$-square $s$ in $\Cc$ as below, with the $x$-morphisms denoted horizontally, and the $y$-morphisms denoted vertically, we have $F(s) | \alpha(f^{\prime}) = \alpha(f) | G(s) $, where $|$ denotes $x$-composition.

	\[\begin{tikzcd}
		a && {a^{\prime}} \\
		& s \\
		b && {b^{\prime}}
		\arrow["g", from=1-1, to=1-3]
		\arrow["f", from=1-1, to=3-1]
		\arrow["{f^{\prime}}", from=1-3, to=3-3]
		\arrow["h"', from=3-1, to=3-3]
	\end{tikzcd}\]
	
	\[\begin{tikzcd}
		{F(a)} && {F(a^{\prime})} && {G(a^{\prime})} \\
		& {F(s)} && {\alpha(f^{\prime})} \\
		{F(b)} && {F(b^{\prime})} && {G(b^{\prime})} \\
		&& {=} \\
		{F(a)} && {G(a)} && {G(a^{\prime})} \\
		& {\alpha(f)} && {G(s)} \\
		{F(b)} && {G(b)} && {G(b^{\prime})}
		\arrow["{F(g)}"{description}, from=1-1, to=1-3]
		\arrow["{F(f)}"{description}, from=1-1, to=3-1]
		\arrow["{\alpha(a^{\prime})}"{description}, from=1-3, to=1-5]
		\arrow["{F(f^{\prime})}"{description}, from=1-3, to=3-3]
		\arrow["{G(f^{\prime})}"{description}, from=1-5, to=3-5]
		\arrow["{F(h)}"{description}, from=3-1, to=3-3]
		\arrow["{\alpha(b^{\prime})}"{description}, from=3-3, to=3-5]
		\arrow["{\alpha(a)}"{description}, from=5-1, to=5-3]
		\arrow["{F(f)}"{description}, from=5-1, to=7-1]
		\arrow["{G(g)}"{description}, from=5-3, to=5-5]
		\arrow["{G(f)}"{description}, from=5-3, to=7-3]
		\arrow["{G(f^{\prime})}"{description}, from=5-5, to=7-5]
		\arrow["{\alpha(b)}"{description}, from=7-1, to=7-3]
		\arrow["{G(h)}"{description}, from=7-3, to=7-5]
	\end{tikzcd}\]
	
		We require similar equalities for $xz$-squares.
	
	\item It maps $yz$ squares $s$ of $\mathbb{C}$ to $xyz$ cubes $\alpha(s)$ in $\mathbb{D}$, having prescribed boundaries, double functorially in the $yz$ square $s$, and with a naturality condition for $xyz$ cubes of $\mathbb{C}$.

	\[\begin{tikzcd}
		&& \bullet &&& \bullet \\
		\\
		\bullet &&& \bullet \\
		&& \bullet &&& \bullet \\
		\\
		\bullet &&& \bullet
		\arrow[from=1-3, to=1-6]
		\arrow[""{name=0, anchor=center, inner sep=0}, "{F(f)}"{description}, from=1-3, to=3-1]
		\arrow[""{name=1, anchor=center, inner sep=0}, "{F(k)}"{description}, from=1-3, to=4-3]
		\arrow[""{name=2, anchor=center, inner sep=0}, "{G(f)}"{description}, from=1-6, to=3-4]
		\arrow[""{name=3, anchor=center, inner sep=0}, "{G(k)}"{description}, from=1-6, to=4-6]
		\arrow[from=3-1, to=3-4]
		\arrow[""{name=4, anchor=center, inner sep=0}, "{F(h)}"{description}, from=3-1, to=6-1]
		\arrow[""{name=5, anchor=center, inner sep=0}, "{G(h)}"{description}, from=3-4, to=6-4]
		\arrow[from=4-3, to=4-6]
		\arrow[""{name=6, anchor=center, inner sep=0}, "{F(g)}"{description}, from=4-3, to=6-1]
		\arrow[""{name=7, anchor=center, inner sep=0}, "{G(g)}"{description}, from=4-6, to=6-4]
		\arrow[from=6-1, to=6-4]
		\arrow["{\alpha(f)}"{description}, shorten <=19pt, shorten >=19pt, Rightarrow, from=0, to=2]
		\arrow["{\alpha(k)}"{description}, shorten <=19pt, shorten >=19pt, Rightarrow, from=1, to=3]
		\arrow["{F(s)}"{description}, shorten <=14pt, shorten >=14pt, equals, from=1, to=4]
		\arrow["{G(s)}"{description}, shorten <=14pt, shorten >=14pt, equals, from=3, to=5]
		\arrow["{\alpha(h)}"{description}, shorten <=19pt, shorten >=19pt, Rightarrow, from=4, to=5]
		\arrow["{\alpha(g)}"{description}, shorten <=19pt, shorten >=19pt, Rightarrow, from=6, to=7]
	\end{tikzcd}\]

	Double functoriality in $s$ means that $\alpha(id_f) = id_{\alpha(f)}$ if $f$ is a $y$-morphism or a $z$-morphism, that $\alpha(s ;_y t) = \alpha(s) ;_y \alpha(t)$ if $s$ and $t$ are $y$-composable squares, and that $\alpha(s ;_z t) = \alpha(s) ;_z \alpha(t)$ if $s$ and $t$ are $z$-composable squares in $\Cc$.

	The naturality condition for cubes is the following: if $c$ is a cube in $\Cc$ with $yz$ faces $s$ and $t$, then we have $\alpha(s);_x G(c) = F(c);_x \alpha(t)$.
	
\end{enumerate}

We define $y$-natural transformations and $z$-natural transformations in a similar way.

	\item Let $F_1, G_1, F_2, G_2 : \mathbb{C} \rightarrow \mathbb{D}$ be triple functors, let $\alpha_i: F_i \rightarrow G_i$ be $x$-natural transformations, $i=1,2$, and let $\beta_1 : F_1 \rightarrow F_2$, $\beta_2: G_1 \rightarrow G_2$ be $y$-natural transformations. An $xy$ natural transformation $\gamma$ with boundary

\begin{tikzcd}
	F_1 \arrow[r, "\alpha_1"] \arrow[d, "\beta_1"]& G_1 \arrow[d, "\beta_2"]
	\\
	F_2 \arrow[r, "\alpha_2"]& G_2
\end{tikzcd}
is defined by the following data:

\begin{enumerate}
	\item It maps objects $c$ of $\mathbb{C}$ to $xy$ squares $\gamma(c)$ in $\mathbb{D}$ with suitable boundaries, satisfying naturality conditions for $x$ morphisms and $y$ morphisms in $\mathbb{C}$ similar to modifications. 
	
	\[\begin{tikzcd}
		{F_1(c)} && {G_1(c)} \\
		& {\gamma(c)} \\
		{F_2(c)} && {G_2(c)}
		\arrow["{\alpha_1(c)}"{description}, from=1-1, to=1-3]
		\arrow["{\beta_1(c)}"{description}, from=1-1, to=3-1]
		\arrow["{\beta_2(c)}"{description}, from=1-3, to=3-3]
		\arrow["{\alpha_2(c)}"{description}, from=3-1, to=3-3]
	\end{tikzcd}\]

	The naturality conditions are as follows: if $f: c \rightarrow d$, resp. $g: c \rightarrow d$ is an $x$-morphism, resp. a $y$-morphism, then we have $\gamma(c) ;_x \beta_2(f) = \beta_1(f);_x \gamma(d)$, resp. $\gamma(c) ;_y \alpha_2(g) = \alpha_1(g) ;_y \gamma(d)$.

	\[\begin{tikzcd}
		{F_1(c)} && {G_1(c)} && {G_1(d)} \\
		& {\gamma(c)} && {\beta_2(f)} \\
		{F_2(c)} && {G_2(c)} && {G_2(d)} \\
		&& {=} \\
		{F_1(c)} && {F_1(d)} && {G_1(d)} \\
		& {\beta_1(f)} && {\gamma(d)} \\
		{F_2(c)} && {F_2(d)} && {G_2(d)}
		\arrow["{\alpha_1(c)}"{description}, from=1-1, to=1-3]
		\arrow["{\beta_1(c)}"{description}, from=1-1, to=3-1]
		\arrow["{G_1(f)}"{description}, from=1-3, to=1-5]
		\arrow["{\beta_2(c)}"{description}, from=1-3, to=3-3]
		\arrow["{\beta_2(d)}"{description}, from=1-5, to=3-5]
		\arrow["{\alpha_2(c)}"{description}, from=3-1, to=3-3]
		\arrow["{G_2(f)}"{description}, from=3-3, to=3-5]
		\arrow["{F_1(f)}"{description}, from=5-1, to=5-3]
		\arrow["{\beta_1(c)}"{description}, from=5-1, to=7-1]
		\arrow["{\alpha_1(d)}"{description}, from=5-3, to=5-5]
		\arrow["{\beta_1(d)}"{description}, from=5-3, to=7-3]
		\arrow["{\beta_2(d)}"{description}, from=5-5, to=7-5]
		\arrow["{F_2(f)}"{description}, from=7-1, to=7-3]
		\arrow["{\alpha_2(d)}"{description}, from=7-3, to=7-5]
	\end{tikzcd}\]

	\[\begin{tikzcd}
		{F_1(c)} && {G_1(c)} && {F_1(c)} && {G_1(c)} \\
		& {\gamma(c)} &&&& {\alpha_1(g)} \\
		{F_2(c)} && {G_2(c)} & {=} & {F_1(d)} && {G_1(d)} \\
		& {\alpha_2(g)} &&&& {\gamma(d)} \\
		{F_2(d)} && {G_2(d)} && {F_2(d)} && {G_2(d)}
		\arrow["{\alpha_1(c)}"{description}, from=1-1, to=1-3]
		\arrow["{\beta_1(c)}"{description}, from=1-1, to=3-1]
		\arrow["{\beta_2(c)}"{description}, from=1-3, to=3-3]
		\arrow["{\alpha_1(c)}"{description}, from=1-5, to=1-7]
		\arrow["{F_1(g)}"{description}, from=1-5, to=3-5]
		\arrow["{G_1(g)}"{description}, from=1-7, to=3-7]
		\arrow["{\alpha_2(c)}"{description}, from=3-1, to=3-3]
		\arrow["{F_2(g)}"{description}, from=3-1, to=5-1]
		\arrow["{G_2(g)}"{description}, from=3-3, to=5-3]
		\arrow["{\alpha_1(d)}"{description}, from=3-5, to=3-7]
		\arrow["{\beta_1(d)}"{description}, from=3-5, to=5-5]
		\arrow["{\beta_2(d)}"{description}, from=3-7, to=5-7]
		\arrow["{\alpha_2(d)}"{description}, from=5-1, to=5-3]
		\arrow["{\alpha_2(d)}"{description}, from=5-5, to=5-7]
	\end{tikzcd}\]

	\item It maps $z$ morphisms $f$ of $\mathbb{C}$ to $xyz$ cubes $\gamma(f)$ in $\mathbb{D}$ with the required boundaries, functorially in the $z$ direction.
	
	\[\begin{tikzcd}
		&& {F_1(c)} &&&&& {G_1(c)} \\
		\\
		{F_1(d)} &&&&& {G_1(d)} \\
		&& {F_2(c)} &&&&& {G_2(c)} \\
		\\
		{F_2(d)} &&&&& {G_2(d)}
		\arrow[""{name=0, anchor=center, inner sep=0}, "{\alpha_1(c)}"{description}, from=1-3, to=1-8]
		\arrow["{F_1(f)}"{description}, from=1-3, to=3-1]
		\arrow[""{name=1, anchor=center, inner sep=0}, "{\beta_1(c)}"{description}, from=1-3, to=4-3]
		\arrow["{G_1(f)}"{description}, from=1-8, to=3-6]
		\arrow[""{name=2, anchor=center, inner sep=0}, "{\beta_2(c)}"{description}, from=1-8, to=4-8]
		\arrow[""{name=3, anchor=center, inner sep=0}, "{\alpha_1(d)}"{description}, from=3-1, to=3-6]
		\arrow[""{name=4, anchor=center, inner sep=0}, "{\beta_1(d)}"{description}, from=3-1, to=6-1]
		\arrow[""{name=5, anchor=center, inner sep=0}, "{\beta_2(d)}"{description}, from=3-6, to=6-6]
		\arrow[""{name=6, anchor=center, inner sep=0}, "{\alpha_2(c)}"{description}, from=4-3, to=4-8]
		\arrow["{F_2(f)}"{description}, from=4-3, to=6-1]
		\arrow["{G_2(f)}"{description}, from=4-8, to=6-6]
		\arrow[""{name=7, anchor=center, inner sep=0}, "{\alpha_2(d)}"{description}, from=6-1, to=6-6]
		\arrow["{\alpha_1(f)}"{description}, shorten <=15pt, shorten >=15pt, Rightarrow, from=0, to=3]
		\arrow["{\beta_1(f)}"{description}, shorten <=15pt, shorten >=15pt, Rightarrow, from=1, to=4]
		\arrow["{\beta_2(f)}"{description}, shorten <=15pt, shorten >=15pt, Rightarrow, from=2, to=5]
		\arrow["{\alpha_2(f)}"{description}, shorten <=15pt, shorten >=15pt, Rightarrow, from=6, to=7]
	\end{tikzcd}\]
	Functoriality means that identity $z$-morphisms are mapped to identity cubes, and $\gamma(f;_z g) = \gamma(f) ;_z \gamma(g)$, for all $z$-composable $z$-morphisms $f$, $g$.

\end{enumerate}

\item Given eight triple functors, four $x$-natural transformations, four $y$-natural transformations, four $z$-natural transformations, two $xy$-natural transformations, two $xz$-natural transformations, and two $yz$-natural transformations, arranged as the boundary of a cube, an $xyz$ natural transformation with this boundary maps objects of $\mathbb{C}$ to $xyz$ morphisms in $\mathbb{D}$ with suitable boundary, satisfying naturality conditions with respect to $x$-morphisms, $y$-morphisms and $z$-morphisms of $\mathbb{C}$.

Let us use the following convention: for triple functors, subscripts denote coordinates on the diagrams, in the $xyz$ order; the vertical direction is the $y$-direction, the horizontal one is the $x$-direction, and the last one is the $z$-direction, so that our triple functors might be named $F_{010}$, $F_{111}$, etc. For $1$-dimensional natural transformations, we use names such as $\alpha^{x}_{01}$ or $\alpha^{z}_{11}$, where the superscript denotes the kind of natural transformation, and the subscript denotes the coordinates with respect to the other two directions.
\[\begin{tikzcd}
	&& {F_{000}} &&&&& {F_{100}} \\
	\\
	{F_{001}} &&&&& {F_{101}} \\
	\\
	\\
	&& {F_{010}} &&&&& {F_{110}} \\
	\\
	{F_{011}} &&&&& {F_{111}}
	\arrow[""{name=0, anchor=center, inner sep=0}, "{\alpha^{x}_{00}}"{description}, from=1-3, to=1-8]
	\arrow["{\alpha^{z}_{00}}"{description}, from=1-3, to=3-1]
	\arrow[""{name=1, anchor=center, inner sep=0}, "{\alpha^{y}_{00}}"{description}, from=1-3, to=6-3]
	\arrow["{\alpha^{z}_{10}}"{description}, from=1-8, to=3-6]
	\arrow[""{name=2, anchor=center, inner sep=0}, "{\alpha^{y}_{10}}"{description}, from=1-8, to=6-8]
	\arrow[""{name=3, anchor=center, inner sep=0}, "{\alpha^{x}_{01}}"{description}, from=3-1, to=3-6]
	\arrow[""{name=4, anchor=center, inner sep=0}, "{\alpha^{y}_{01}}"{description}, from=3-1, to=8-1]
	\arrow[""{name=5, anchor=center, inner sep=0}, "{\alpha^{y}_{11}}"{description}, from=3-6, to=8-6]
	\arrow[""{name=6, anchor=center, inner sep=0}, "{\alpha^{x}_{10}}"{description}, from=6-3, to=6-8]
	\arrow["{\alpha^{z}_{01}}"{description}, from=6-3, to=8-1]
	\arrow["{\alpha^{z}_{11}}"{description}, from=6-8, to=8-6]
	\arrow[""{name=7, anchor=center, inner sep=0}, "{\alpha^{x}_{11}}"{description}, from=8-1, to=8-6]
	\arrow["{\gamma^{xz}_0}"{description}, shorten <=14pt, shorten >=14pt, Rightarrow, from=0, to=3]
	\arrow["{\gamma^{xy}_0}"{description}, shorten <=32pt, shorten >=32pt, Rightarrow, from=1, to=2]
	\arrow["{\gamma^{yz}_0}"{description}, shorten <=14pt, shorten >=14pt, Rightarrow, from=1, to=4]
	\arrow["{\gamma^{yz}_1}"{description}, shorten <=14pt, shorten >=14pt, Rightarrow, from=2, to=5]
	\arrow["{\gamma^{xy}_1}"{description}, shorten <=32pt, shorten >=32pt, Rightarrow, from=4, to=5]
	\arrow["{\gamma^{xz}_1}"{description}, shorten <=14pt, shorten >=14pt, Rightarrow, from=6, to=7]
\end{tikzcd}\]

So, an $xyz$-natural transformation $\varepsilon$ with boundary as above maps each object $c \in \Cc$ to an $xyz$-cube $\varepsilon(c)$ in $\Dd$ with boundary as below:

\[\begin{tikzcd}
	&& {F_{000}(c)} &&&&& {F_{100}(c)} \\
	\\
	{F_{001}(c)} &&&&& {F_{101}(c)} \\
	\\
	\\
	&& {F_{010}(c)} &&&&& {F_{110}(c)} \\
	\\
	{F_{011}(c)} &&&&& {F_{111}(c)}
	\arrow[""{name=0, anchor=center, inner sep=0}, "{\alpha^{x}_{00}(c)}"{description}, from=1-3, to=1-8]
	\arrow["{\alpha^{z}_{00}(c)}"{description}, from=1-3, to=3-1]
	\arrow[""{name=1, anchor=center, inner sep=0}, "{\alpha^{y}_{00}(c)}"{description}, from=1-3, to=6-3]
	\arrow["{\alpha^{z}_{10}(c)}"{description}, from=1-8, to=3-6]
	\arrow[""{name=2, anchor=center, inner sep=0}, "{\alpha^{y}_{10}(c)}"{description}, from=1-8, to=6-8]
	\arrow[""{name=3, anchor=center, inner sep=0}, "{\alpha^{x}_{01}(c)}"{description}, from=3-1, to=3-6]
	\arrow[""{name=4, anchor=center, inner sep=0}, "{\alpha^{y}_{01}(c)}"{description}, from=3-1, to=8-1]
	\arrow[""{name=5, anchor=center, inner sep=0}, "{\alpha^{y}_{11}(c)}"{description}, from=3-6, to=8-6]
	\arrow[""{name=6, anchor=center, inner sep=0}, "{\alpha^{x}_{10}(c)}"{description}, from=6-3, to=6-8]
	\arrow["{\alpha^{z}_{01}(c)}"{description}, from=6-3, to=8-1]
	\arrow["{\alpha^{z}_{11}(c)}"{description}, from=6-8, to=8-6]
	\arrow[""{name=7, anchor=center, inner sep=0}, "{\alpha^{x}_{11}(c)}"{description}, from=8-1, to=8-6]
	\arrow["{\gamma^{xz}_0(c)}"{description}, shorten <=15pt, shorten >=15pt, Rightarrow, from=0, to=3]
	\arrow["{\gamma^{xy}_0(c)}"{description}, shorten <=35pt, shorten >=35pt, Rightarrow, from=1, to=2]
	\arrow["{\gamma^{yz}_0(c)}"{description}, shorten <=15pt, shorten >=15pt, Rightarrow, from=1, to=4]
	\arrow["{\gamma^{yz}_1(c)}"{description}, shorten <=15pt, shorten >=15pt, Rightarrow, from=2, to=5]
	\arrow["{\gamma^{xy}_1(c)}"{description}, shorten <=35pt, shorten >=35pt, Rightarrow, from=4, to=5]
	\arrow["{\gamma^{xz}_1(c)}"{description}, shorten <=15pt, shorten >=15pt, Rightarrow, from=6, to=7]
\end{tikzcd}\]

The naturality condition is as follows: if $f: c \rightarrow d$, resp. $g: c \rightarrow d$, resp. $h: c \rightarrow d$, is an $x$-, resp. $y$-, resp. $z$-, morphism in $\Cc$, then we have $\varepsilon(c);_x \gamma_1^{yz}(f) = \gamma_0^{yz}(f);_x \varepsilon(d)$, resp. $\varepsilon(c);_y \gamma_1^{xz}(g) = \gamma_0^{xz}(g);_y \varepsilon(d)$, resp. $\varepsilon(c);_z \gamma_1^{xy}(h) = \gamma_0^{xy}(h);_y \varepsilon(d)$.

\end{itemize}

\end{definition}

\section{Details of the main constructions}

\subsection{The triple category $Arena^{\mathrm{Moore}}_{\CC}$}\label{appendix_Arena_Moore}

Here, we prove Theorem \ref{theo_Arena_C}, by constructing the $x$-semi symmetric monoidal triple category $Arena^{\mathrm{Moore}}_{\CC}$.

\begin{construction}\label{constr_Arena_C}
	
Consider the symmetric monoidal $x$-semi triple category defined as follows.

	\begin{enumerate}

		\item The objects are pairs $\begin{pmatrix} a \\ c  \\ \end{pmatrix}$, where $a$ and $c$ are objects of $\CC$.

		\item The $x$ morphisms ${\begin{pmatrix} a_1 \\ c_1  \\ \end{pmatrix}} \rightrightarrows {\begin{pmatrix} a_2 \\ c_2  \\ \end{pmatrix}}$, also known as "co-parametric charts" or "copy-composition charts", are given by objects ${\begin{pmatrix} a_{12} \\ c_{12}  \\ \end{pmatrix}}$, along with pairs of morphisms $f_{12}: c_1 \rightarrow c_{12} \otimes c_2$, $f_{12}^{\flat}: c_1 \otimes a_1 \rightarrow c_{12} \otimes c_2 \otimes a_{12} \otimes a_2$, such that the following square commutes:

		\begin{center}
			\begin{tikzcd}
				c_1 \otimes a_1 \arrow[r, "f_{12}^{\flat}"] \arrow[d, "\pi"]& c_{12} \otimes c_2 \otimes a_{12} \otimes a_2  \arrow[d, "\pi"]
				\\
				c_1 \arrow[r, "f_{12}"]& c_{12} \otimes c_2
			\end{tikzcd}
		\end{center}
		Composition is defined using copy-composition in $\CC$, i.e. composition of co-parametric maps: using the same notations as above, the composite of $x$-morphisms ${\begin{pmatrix} a_1 \\ c_1  \\ \end{pmatrix}} \rightrightarrows {\begin{pmatrix} a_2 \\ c_2  \\ \end{pmatrix}}$ and ${\begin{pmatrix} a_2 \\ c_2  \\ \end{pmatrix}} \rightrightarrows {\begin{pmatrix} a_3 \\ c_3  \\ \end{pmatrix}}$ is given by the object ${\begin{pmatrix} a_{12} \otimes a_2 \otimes a_{23} \\ c_{12} \otimes c_2 \otimes c_{23}  \\ \end{pmatrix}}$, along with the morphisms $c_1 \rightarrow c_{12} \otimes c_2 \otimes c_{23} \otimes c_3$ and $c_1 \otimes a_1 \rightarrow c_{12} \otimes c_2 \otimes c_{23} \otimes c_3 \otimes a_{12} \otimes a_2 \otimes a_{23} \otimes a_3$ in $\CC$ obtained by copy-composition:

\[\begin{tikzcd}
	{c_1a_1} & {c_{12}c_2 a_{12}a_2} & {c_{12}c_2 a_{12}a_2 \,\, c_2 a_2} && {c_{12} c_2 c_{23} c_3 a_{12} a_2 a_{23} a_3} \\
	\\
	{c_1} & {c_{12}c_2} & {c_{12} c_2 \,\, c_2} && {c_{12} c_2 c_{23} c_3}
	\arrow["{f_{12}^{\flat}}", from=1-1, to=1-2]
	\arrow["{\sigma; copy_{c_2a_2}; \sigma}", from=1-2, to=1-3]
	\arrow["{(c_{12}c_2a_{12}a_2 \otimes f_{23}^{\flat}); \sigma}", from=1-3, to=1-5]
	\arrow["{f_{12}}", from=3-1, to=3-2]
	\arrow["{copy_{c_2}}", from=3-2, to=3-3]
	\arrow["{(c_{12}c_2 \otimes f_{23})}", from=3-3, to=3-5]
\end{tikzcd}\]

Let us show that this composition is well-defined, i.e. the required square commutes:

\[\begin{tikzcd}
	{c_1a_1} & {c_{12}c_2 a_{12}a_2} & {c_{12}c_2 a_{12}a_2 \,\, c_2 a_2} && {c_{12} c_2 c_{23} c_3 a_{12} a_2 a_{23} a_3} \\
	\\
	{c_1} & {c_{12}c_2} & {c_{12} c_2 \,\, c_2} && {c_{12} c_2 c_{23} c_3}
	\arrow["{f_{12}^{\flat}}", from=1-1, to=1-2]
	\arrow["\pi"{description}, from=1-1, to=3-1]
	\arrow["{\sigma; copy_{c_2a_2}; \sigma}", from=1-2, to=1-3]
	\arrow["{del_{a_{12} a_2}}"{description}, from=1-2, to=3-2]
	\arrow["{(c_{12}c_2a_{12}a_2 \otimes f_{23}^{\flat}); \sigma}", from=1-3, to=1-5]
	\arrow["{del_{a_{12} a_2} \otimes del_{a_2}}"{description}, from=1-3, to=3-3]
	\arrow["\pi"{description}, from=1-5, to=3-5]
	\arrow["{f_{12}}", from=3-1, to=3-2]
	\arrow["{copy_{c_2}}", from=3-2, to=3-3]
	\arrow["{(c_{12}c_2 \otimes f_{23})}", from=3-3, to=3-5]
\end{tikzcd}\]

The leftmost and rightmost outer squares in the diagram above commute thanks to $(f_{12}, f_{12}^{\flat})$ and $(f_{23}, f_{23}^{\flat})$ being $x$-morphisms. The center square commutes thanks to the properties of the maps $\sigma$, $del$, and $copy$, coming from the Markov category axioms.

Associativity follows from associativity of copying, along with functoriality of the tensor product. However, there are no identity $x$-morphisms in general, because of the copying; that is why we shall get an $x$-semi triple category.

The motivation for using copy-composition, and for such a definition for $x$-morphisms in the first place, is the need for $xy$-interchange; hopefully, the proof of Proposition \ref{prop_interchange_lenses} will make it clearer.

		\item The $y$ morphisms are \emph{deterministic} lenses $({\begin{pmatrix} a_1 \\ c_1  \\ \end{pmatrix}} \leftrightarrows {\begin{pmatrix} a_2 \\ c_2  \\ \end{pmatrix}})$, i.e. they are given by pairs of deterministic maps, i.e. morphisms in $\CC_{det}$: $f: c_1 \rightarrow c_2$, $f^{\sharp}:  c_1 \otimes a_2 \rightarrow a_1$. Composition is composition of lenses in the cartesian category $\CC_{det}$: given $({\begin{pmatrix} a_1 \\ c_1  \\ \end{pmatrix}} \leftrightarrows {\begin{pmatrix} a_2 \\ c_2  \\ \end{pmatrix}})$ and $({\begin{pmatrix} a_2 \\ c_2  \\ \end{pmatrix}} \leftrightarrows {\begin{pmatrix} a_3 \\ c_3  \\ \end{pmatrix}})$, the composite $( {\begin{pmatrix} a_1 \\ c_1  \\ \end{pmatrix}} \leftrightarrows {\begin{pmatrix} a_3 \\ c_3  \\ \end{pmatrix}})$ is given by the maps $c_1 \rightarrow c_2 \rightarrow c_3$ and $  c_1 \otimes a_3 \rightarrow c_1 \otimes c_1 \otimes a_3 \rightarrow  c_1 \otimes   c_2 \otimes a_3 \rightarrow  c_1 \otimes a_2 \rightarrow a_1$.
		
		\item The $z$ morphisms ${\begin{pmatrix} a_1 \\ c_1  \\ \end{pmatrix}} \rightrightarrows {\begin{pmatrix} a_2 \\ c_2  \\ \end{pmatrix}}$, are pairs of \emph{deterministic} morphisms $f: c_1 \rightarrow c_2$, $g: a_1 \rightarrow a_2$, in $\CC$.


		\item Consider the boundary of an $xy$ square as in the diagram below:
		\begin{center}
\[\begin{tikzcd}
	\begin{array}{c} {\begin{pmatrix} a_1 \\ c_1  \\ \end{pmatrix}} \end{array} && \begin{array}{c} {\begin{pmatrix} a_2 \\ c_2  \\ \end{pmatrix}} \end{array} \\
	\begin{array}{c} {\begin{pmatrix} a_3 \\ c_3  \\ \end{pmatrix}} \end{array} && \begin{array}{c} {\begin{pmatrix} a_4 \\ c_4  \\ \end{pmatrix}} \end{array}
	\arrow["{(a_{12}, c_{12}, f_{12}, f^{\flat}_{12})}", shift left=2, from=1-1, to=1-3]
	\arrow[shift right=2, from=1-1, to=1-3]
	\arrow["{(f_{13}, f_{13}^{\flat})}"', shift right=2, from=1-1, to=2-1]
	\arrow[shift right=2, from=1-3, to=2-3]
	\arrow[shift right=2, from=2-1, to=1-1]
	\arrow[shift left=2, from=2-1, to=2-3]
	\arrow["{(a_{34}, c_{34}, f_{34}, f^{\flat}_{34})}"', shift right=2, from=2-1, to=2-3]
	\arrow["{(f_{24}, f_{24}^{\flat})}"', shift right=2, from=2-3, to=1-3]
\end{tikzcd}\]
		\end{center}
		
		Then, a \emph{square} with this boundary is given by a $y$-morphism $(f_{1234}, f_{1234}^{\sharp}) : {\begin{pmatrix} a_{12} \\ c_{12}  \\ \end{pmatrix}} \leftrightarrows {\begin{pmatrix} a_{34} \\ c_{34}  \\ \end{pmatrix}}$, along with a morphism $s_{}:  c_1 \otimes a_3 \rightarrow   c_{12} \otimes c_2 \otimes a_{34} \otimes a_4$ in $\CC$, making the following squares commute:
		
\[\begin{tikzcd}
	{{c_1 a_3}} & {{c_{12}c_2 a_{34}a_4}} & {c_1} & {c_{12}c_2} \\
	{c_3 a_3} & {c_{34}c_4 a_{34}a_4} & {c_3} & {c_{34}c_4} \\
	{{c_1 a_3}} &&& {{c_{12}c_2 a_{34}a_4}} \\
	{c_1 a_1} &&& {c_{12}c_2 a_{12}a_2}
	\arrow["s", from=1-1, to=1-2]
	\arrow["{ f_{13}}"{description}, from=1-1, to=2-1]
	\arrow["{f_{1234} \otimes f_{24}}"{description}, from=1-2, to=2-2]
	\arrow["{f_{12}}", from=1-3, to=1-4]
	\arrow["{f_{13}}"{description}, from=1-3, to=2-3]
	\arrow["{f_{1234} \otimes f_{24}}"{description}, from=1-4, to=2-4]
	\arrow["{f_{34}^{\flat}}", from=2-1, to=2-2]
	\arrow["{f_{34}}", from=2-3, to=2-4]
	\arrow["s", from=3-1, to=3-4]
	\arrow["{copy_{c_1}  ; f_{13}^{\sharp}}"{description}, from=3-1, to=4-1]
	\arrow["{copy_{c_{12}c_2} ; \sigma ;(f_{1234}^{\sharp}\otimes f_{24}^{\sharp})}"{description}, from=3-4, to=4-4]
	\arrow["{f_{12}^{\flat}}", from=4-1, to=4-4]
\end{tikzcd}\]

		\item Composition of $xy$ squares in the $x$ direction is given by copy-composition in $\CC$: let $s$ and $t$ be two $x$-composable squares

\[\begin{tikzcd}
	\begin{array}{c} {\begin{pmatrix} a_1 \\ c_1  \\ \end{pmatrix}} \end{array} && \begin{array}{c} {\begin{pmatrix} a_2 \\ c_2  \\ \end{pmatrix}} \end{array} && \begin{array}{c} {\begin{pmatrix} a_2 \\ c_2  \\ \end{pmatrix}} \end{array} && \begin{array}{c} {\begin{pmatrix} a_3 \\ c_3  \\ \end{pmatrix}} \end{array} \\
	& s &&&& t \\
	\begin{array}{c} {\begin{pmatrix} a_4 \\ c_4  \\ \end{pmatrix}} \end{array} && \begin{array}{c} {\begin{pmatrix} a_5 \\ c_5  \\ \end{pmatrix}} \end{array} && \begin{array}{c} {\begin{pmatrix} a_5 \\ c_5  \\ \end{pmatrix}} \end{array} && \begin{array}{c} {\begin{pmatrix} a_6 \\ c_6  \\ \end{pmatrix}} \end{array}
	\arrow["{(a_{12}, c_{12}, f_{12}, f^{\flat}_{12})}", shift left=2, from=1-1, to=1-3]
	\arrow[shift right=2, from=1-1, to=1-3]
	\arrow["{(f_{14}, f_{14}^{\sharp})}"{description}, shift right=2, from=1-1, to=3-1]
	\arrow[shift right=2, from=1-3, to=3-3]
	\arrow["{(a_{23}, c_{23}, f_{23}, f^{\flat}_{23})}", shift left=2, from=1-5, to=1-7]
	\arrow[shift right=2, from=1-5, to=1-7]
	\arrow["{(f_{25}, f_{25}^{\sharp})}"{description}, shift right=2, from=1-5, to=3-5]
	\arrow[shift right=2, from=1-7, to=3-7]
	\arrow[shift right=2, from=3-1, to=1-1]
	\arrow[shift left=2, from=3-1, to=3-3]
	\arrow["{(a_{45}, c_{45}, f_{45}, f^{\flat}_{45})}"', shift right=2, from=3-1, to=3-3]
	\arrow["{(f_{25}, f_{25}^{\sharp})}"{description}, shift right=2, from=3-3, to=1-3]
	\arrow[shift right=2, from=3-5, to=1-5]
	\arrow[shift left=2, from=3-5, to=3-7]
	\arrow["{(a_{56}, c_{56}, f_{56}, f^{\flat}_{56})}"', shift right=2, from=3-5, to=3-7]
	\arrow["{(f_{36}, f_{36}^{\sharp})}"{description}, shift right=2, from=3-7, to=1-7]
\end{tikzcd}\]

		with $s:  c_1 \otimes a_4 \rightarrow   c_{12} \otimes c_2 \otimes a_{45} \otimes a_5$ and $t:  c_2 \otimes a_5 \rightarrow    c_{23} \otimes c_3 \otimes a_{56} \otimes a_6$. Then, the $x$-composite $s | t $ is given by the tensor product of lenses 
		$(f_{1245} \otimes f_{25}\otimes f_{2356}, f_{1245}^{\sharp} \otimes f_{25}^{\sharp} \otimes f_{2356}^{\sharp}): {\begin{pmatrix} a_{12} \\ c_{12}  \\ \end{pmatrix}} \otimes {\begin{pmatrix} a_{2} \\ c_{2}  \\ \end{pmatrix}} \otimes {\begin{pmatrix} a_{23} \\ c_{23}  \\ \end{pmatrix}} \leftrightarrows {\begin{pmatrix} a_{45} \\ c_{45}  \\ \end{pmatrix}} \otimes {\begin{pmatrix} a_{5} \\ c_{5}  \\ \end{pmatrix}} \otimes {\begin{pmatrix} a_{56} \\ c_{56}  \\ \end{pmatrix}}$, along with the following composite Markov map:

\[\begin{tikzcd}
	{c_1 a_4} \\
	{c_{12}c_2 a_{45}a_5} \\
	{c_{12}c_2 a_{45}a_5 \,\,c_2a_5} \\
	{c_{12}c_2 a_{45}a_5 \, \, c_{23}c_3 a_{56}a_6} \\
	{c_{12}c_2 c_{23}c_3 a_{45}a_5 a_{56}a_6}
	\arrow["s"{description}, from=1-1, to=2-1]
	\arrow["{\sigma; copy_{c_2a_5}; \sigma}"{description}, from=2-1, to=3-1]
	\arrow["{c_{12}c_2 a_{45}a_5 \otimes t}"{description}, from=3-1, to=4-1]
	\arrow["\sigma"{description}, from=4-1, to=5-1]
\end{tikzcd}\]

\begin{claim}\label{claim_x_comp_well_def}
	This $x$-composition is well-defined.
\end{claim}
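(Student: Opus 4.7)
The goal is to verify that the triple consisting of the tensor lens $(f_{1245} \otimes f_{25} \otimes f_{2356}, f_{1245}^{\sharp} \otimes f_{25}^{\sharp} \otimes f_{2356}^{\sharp})$ together with the Markov map $s|t \colon c_1 \otimes a_4 \to c_{12} c_2 c_{23} c_3 \otimes a_{45} a_5 a_{56} a_6$ defined above satisfies the three defining conditions (a), (b), (c) of an $xy$-square whose boundary is the $x$-composite of the given boundaries. First I would recall that $s$ and $t$ each satisfy their own versions of (a), (b), (c), and that the boundary top and bottom $x$-morphisms of the composite are given by copy-composition in $\CC$ as defined in item 2 of Construction~\ref{constr_Arena_C}.

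Condition (a) is purely $c$-level and concerns commutation of the copy-composed top with the composed bottom through the tensored middle lens. The plan is to tensor condition (a) for $s$ with condition (a) for $t$, then insert a $copy_{c_2}$ on the top path and a $copy_{c_5}$ on the bottom path at the appropriate places. Because $f_{25}$ is deterministic, $copy_{c_2} ; (f_{25} \otimes f_{25}) = f_{25}; copy_{c_5}$, which lets the two copies align, and the equation for the composite follows by functoriality of $\otimes$ and naturality of $copy$.

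For condition (b), I would expand $s|t$ according to the defining formula, then post-compose with $(f_{1245} \otimes f_{25} \otimes f_{2356}) \otimes id$ on the right-hand side. Using determinism of the lens component $f_{25}$ one can pull it through the $copy_{c_2 a_5}$, and the equation splits as a stack of (b) for $s$ applied to the left half, followed by (b) for $t$ applied to the right half, reassembled by a symmetry $\sigma$. The equation for the bottom path follows from the definition of copy-composition of $x$-morphisms $f_{45}^{\flat}$ and $f_{56}^{\flat}$.

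Condition (c) is the main obstacle: it involves the $\sharp$-components of the lens and the decomposition of $f_{12}^{\flat} | f_{23}^{\flat}$. The plan is to start from the right-hand side $(s|t); copy_{c_{12} c_2 c_{23}}; (f_{1245}^{\sharp} \otimes f_{25}^{\sharp} \otimes f_{2356}^{\sharp})$, expand $s|t$, and use commutativity and associativity of $copy$ together with determinism of $f_{25}$ (so that the copy of $c_2$ created during the $s,t$ splicing agrees with the one needed to feed $f_{25}^{\sharp}$) to reduce to two nested applications of (c), one for $s$ and one for $t$. These then produce $(copy_{c_1}; \sigma; f_{14}^{\sharp}; f_{12}^{\flat}) | (copy_{c_2}; \sigma; f_{25}^{\sharp}; f_{23}^{\flat})$, which by definition is the copy-composed $x$-morphism on the top-left boundary of the composite square, as required. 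All remaining bookkeeping reduces to naturality of $\sigma$, $copy$, and $del$, and to the Markov axiom that deterministic maps commute with $copy$, which applies to the lens components $f_{14}$, $f_{25}$, $f_{36}$ on the boundary.
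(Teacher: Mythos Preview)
Your plan for conditions (a) and (b) is essentially the paper's argument: both reduce to concatenating the corresponding squares for $s$ and $t$, and the only nontrivial step is commuting the relevant copy map past $f_{25}$, which is exactly determinism of $f_{25}$.

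For condition (c), however, there is a genuine gap. You invoke determinism of $f_{25}$, but that map does not appear in condition (c) at all; the bottom square only involves the $\sharp$-components of the lenses. The crucial step is rather determinism of $f_{25}^{\sharp}$. Concretely, after applying $s$ you sit in $c_{12}c_2a_{45}a_5$; the definition of $s|t$ then performs $copy_{c_2 a_5}$ before feeding one copy into $t$, while the right-hand side of condition (c) requires applying $copy_{c_{12}c_2c_{23}c_3}$ \emph{after} the whole $s|t$ and then feeding the copies into $f_{1245}^{\sharp}\otimes f_{25}^{\sharp}\otimes f_{2356}^{\sharp}\otimes f_{36}^{\sharp}$. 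To reconcile the two you must compare ``first copy $c_2a_5$, then apply $f_{25}^{\sharp}$ to each copy'' with ``apply $f_{25}^{\sharp}$ once, then copy the result $a_2$ (and $c_2$)''. These agree precisely when $f_{25}^{\sharp}$ commutes with $copy$, i.e.\ is deterministic. The paper highlights this explicitly (``the middle part commutes thanks to the map $f_{25}^{\sharp}$ being deterministic''), and it is not a cosmetic point: in $ArenaSys^{\mathrm{Moore}}_{\CC}$ the update maps $f_{25}^{\sharp}$ are allowed to be nondeterministic, and the paper avoids exactly this obstruction there by replacing copy-composition on the top row with ordinary composition. Your argument for (c), as written, would not go through in that comparison, so you should replace the appeal to determinism of $f_{25}$ by determinism of $f_{25}^{\sharp}$ and trace through the $copy_{c_2a_5}$ versus $copy_{a_2}$ rewriting accordingly.
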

		
\begin{proof}
		This relies crucially on determinism of the maps $f_{25}$ and $f_{25}^{\sharp}$; let us now draw some commutative diagrams proving just that:

\[\begin{tikzcd}
	{c_1 a_4} &&&& {c_{12}c_2 a_{45}a_5} \\
	{c_1 a_1} &&&& {c_{12} a_{45}c_2a_5} \\
	{c_{12}c_2 a_{12}a_2} \\
	&&&& {c_{12}c_2 a_{45}a_5 \,\,c_2a_5} \\
	&&&& {c_{12}c_2 a_{45}a_5 \, \, c_{23}c_3 a_{56}a_6} \\
	{c_{12}c_2 a_{12}a_2 \, \, c_2 a_2} &&&& {c_{12}c_2 c_{23}c_3 a_{45}a_5 a_{56}a_6} \\
	&&&& {c_{12}c_2 c_{23}c_3 a_{12}a_2 a_{23}a_3}
	\arrow["s"{description}, from=1-1, to=1-5]
	\arrow["{copy_{c_1}; f_{14}^{\sharp}}"{description}, from=1-1, to=2-1]
	\arrow["\sigma"{description}, from=1-5, to=2-5]
	\arrow["{f_{12}^{\flat}}"{description}, from=2-1, to=3-1]
	\arrow["{\sigma; copy_{c_{12}c_2}; \sigma; (c_{12}c_2 \otimes f_{1245}^{\sharp} \otimes f_{25}^{\sharp})}"{description}, curve={height=6pt}, from=2-5, to=3-1]
	\arrow["{copy_{c_2a_5}; \sigma}"{description}, from=2-5, to=4-5]
	\arrow["{\sigma; copy_{c_2 a_2}; \sigma}"{description}, from=3-1, to=6-1]
	\arrow["{c_{12}c_2 a_{45}a_5 \otimes t}"{description}, from=4-5, to=5-5]
	\arrow["{\sigma; copy_{c_{12}c_2c_2}; \sigma;f_{1245}^{\sharp} \otimes f_{25}^{\sharp} \otimes f_{25}^{\sharp}}"{description}, curve={height=12pt}, from=4-5, to=6-1]
	\arrow["\sigma"{description}, from=5-5, to=6-5]
	\arrow["{c_{12}c_2 a_{12}a_2 \otimes f_{23}^{\flat}; \sigma}"', curve={height=12pt}, from=6-1, to=7-5]
	\arrow["{copy_{c_{12}c_2 c_{23}c_3}; \sigma; f_{1245}^{\sharp}\otimes f_{25}^{\sharp} \otimes f_{2356}^{\sharp} \otimes f_{36}^{\sharp}}"{description}, from=6-5, to=7-5]
\end{tikzcd}\]

In the diagram above, the top and bottom pentagons commute by hypothesis on $s$ and $t$, and the middle part commutes thanks to the map $f_{25}^{\sharp}$ being deterministic.

Similarly, in the diagram below, the top and bottom polygons commute thanks to $s$ and $t$ being squares, and the central part commutes thanks to the map $f_{25}$ being deterministic:

\[\begin{tikzcd}
	{c_1 a_4} &&& {c_{12}c_2 a_{45}a_5} \\
	{c_4a_4} &&& {c_{12} a_{45}c_2a_5} \\
	{c_{45}c_5a_{45}a_5} &&& {c_{12}c_2 a_{45}a_5 \,\,c_2a_5} \\
	&&& {c_{12}c_2 a_{45}a_5 \, \, c_{23}c_3 a_{56}a_6} \\
	&&& {c_{12}c_2 c_{23}c_3 a_{45}a_5 a_{56}a_6} \\
	{c_{45}c_5a_{45}a_5 \, \, c_5 a_5} &&& {c_{45}c_5 c_{56}c_6 a_{45}a_5 a_{56}a_6}
	\arrow["s"{description}, from=1-1, to=1-4]
	\arrow["{f_{14} \otimes a_4}"{description}, from=1-1, to=2-1]
	\arrow["\sigma"{description}, from=1-4, to=2-4]
	\arrow["{f_{1245} \otimes f_{25} \otimes a_{45}a_5}"{description}, from=1-4, to=3-1]
	\arrow["{f_{45}^{\flat}}"{description}, from=2-1, to=3-1]
	\arrow["{copy_{c_2a_5}; \sigma}"{description}, from=2-4, to=3-4]
	\arrow["{\sigma; copy_{c_5 a_5}}"{description}, from=3-1, to=6-1]
	\arrow["{c_{12}c_2 a_{45}a_5 \otimes t}"{description}, from=3-4, to=4-4]
	\arrow["{f_{1245} \otimes f_{25} \otimes a_{45}a_5 \otimes f_{25} \otimes a_5}"{description}, from=3-4, to=6-1]
	\arrow["\sigma"{description}, from=4-4, to=5-4]
	\arrow["{f_{1245} \otimes f_{25} \otimes f_{2356} \otimes f_{36}\otimes a_{45}a_5 a_{56}a_6}"{description}, from=5-4, to=6-4]
	\arrow["{c_{45}c_5a_{45}a_5 \otimes f_{56}^{\flat}; \sigma}"', from=6-1, to=6-4]
\end{tikzcd}\]

Commutativity of the third square (the topright one in the definition of $xy$-squares, which doesn't involve the map $s$) is proved similarly; we leave this to the reader.
\end{proof}

		Associativity of this $x$-composition for $xy$-squares follows from associativity of copy-composition, which is itself a consequence of associativity of copying, along with functoriality of the tensor product.

		Recall that $x$-composition of $x$-morphisms has no identities; thus, there are no "$x$-identity $xy$-squares" either.

		As above, the use of copy-composition shall be justified \emph{a posteriori}, through the proof of $xy$-interchange, see Proposition \ref{prop_interchange_lenses}.
		
		\item\label{enum_xy_squares_ArenaC} Composition of $xy$ squares in the $y$ direction is more involved. Let $s$, $t$ be $y$-composable squares as below:

\[\begin{tikzcd}
	\begin{array}{c} {\begin{pmatrix} a_1 \\ c_1  \\ \end{pmatrix}} \end{array} && \begin{array}{c} {\begin{pmatrix} a_2 \\ c_2  \\ \end{pmatrix}} \end{array} \\
	& s \\
	\begin{array}{c} {\begin{pmatrix} a_3 \\ c_3  \\ \end{pmatrix}} \end{array} && \begin{array}{c} {\begin{pmatrix} a_4 \\ c_4  \\ \end{pmatrix}} \end{array} \\
	\begin{array}{c} {\begin{pmatrix} a_3 \\ c_3  \\ \end{pmatrix}} \end{array} && \begin{array}{c} {\begin{pmatrix} a_4 \\ c_4  \\ \end{pmatrix}} \end{array} \\
	& t \\
	\begin{array}{c} {\begin{pmatrix} a_5 \\ c_5  \\ \end{pmatrix}} \end{array} && \begin{array}{c} {\begin{pmatrix} a_6 \\ c_6  \\ \end{pmatrix}} \end{array}
	\arrow["{(a_{12}, c_{12}, f_{12}, f^{\flat}_{12})}", shift left=2, from=1-1, to=1-3]
	\arrow[shift right=2, from=1-1, to=1-3]
	\arrow["{(f_{13}, f_{13}^{\sharp})}"', shift right=2, from=1-1, to=3-1]
	\arrow[shift right=2, from=1-3, to=3-3]
	\arrow[shift right=2, from=3-1, to=1-1]
	\arrow[shift left=2, from=3-1, to=3-3]
	\arrow["{(f_{34}, f^{\flat}_{34})}"', shift right=2, from=3-1, to=3-3]
	\arrow["{(f_{24}, f_{24}^{\sharp})}"', shift right=2, from=3-3, to=1-3]
	\arrow["{(a_{34}, c_{34}, f_{34}, f_{34}^{\flat})}", shift left=2, from=4-1, to=4-3]
	\arrow[shift right=2, from=4-1, to=4-3]
	\arrow["{(f_{35}, f_{35}^{\sharp})}"', shift right=2, from=4-1, to=6-1]
	\arrow[shift right=2, from=4-3, to=6-3]
	\arrow[shift right=2, from=6-1, to=4-1]
	\arrow[shift left=2, from=6-1, to=6-3]
	\arrow["{(a_{56}, c_{56}, f_{56}, f_{56}^{\flat})}"', shift right=2, from=6-1, to=6-3]
	\arrow["{(f_{46}, f_{46}^{\sharp})}"', shift right=2, from=6-3, to=4-3]
\end{tikzcd}\]

		The $y$-morphism $\begin{pmatrix} a_{12} \\ c_{12}  \\ \end{pmatrix} \leftrightarrows \begin{pmatrix} a_{56} \\ c_{56}  \\ \end{pmatrix}$ component of $\frac{s}{t}$ is simply the composite of the (deterministic parametric) lenses $\begin{pmatrix} a_{12} \\ c_{12}  \\ \end{pmatrix} \leftrightarrows \begin{pmatrix} a_{34} \\ c_{34}  \\ \end{pmatrix}$ and $\begin{pmatrix} a_{34} \\ c_{34}  \\ \end{pmatrix} \leftrightarrows \begin{pmatrix} a_{56} \\ c_{56}  \\ \end{pmatrix}$ that are part of the data of $s$ and $t$ respectively.

		Now, let us define the map $\frac{s}{t}: c_1 a_5 \rightarrow c_{12} c_2 a_{56} a_6$, using conditional products. We begin with the following

		\begin{claim}\label{claim_diag_comm_arena_y_comp}
			The diagram below commutes:
			
\[\begin{tikzcd}
	{ c_1 a_5} & {  c_3 a_5} \\
	{  c_1 c_3 a_5} & { c_{34}c_4 a_{56}a_6} \\
	{ c_1 a_3} & { c_{34}c_4 a_{56}a_6 \otimes   c_{34}c_4 a_{56}a_6} \\
	{ c_{12}c_2 a_{34}a_4} & {c_{34}c_4 a_{34}a_4 \otimes  a_{56}a_6} \\
	{ c_{12} c_2 \otimes c_{34}c_4 a_{34}a_4} & {c_{34}c_4 a_{34}a_4}
	\arrow["{  f_{13} \otimes a_5}", from=1-1, to=1-2]
	\arrow["{  (copy_{c_1} ; f_{13}) \otimes a_5}"{description}, from=1-1, to=2-1]
	\arrow["{t}", from=1-2, to=2-2]
	\arrow["{\sigma ; c_1 \otimes f_{35}^{\sharp} }", from=2-1, to=3-1]
	\arrow["{copy_{ c_{34}c_4 a_{56}a_6}}"{description}, from=2-2, to=3-2]
	\arrow["{s_{}}", from=3-1, to=4-1]
	\arrow["{ c_{34}c_4 a_{56}a_6 \otimes (\sigma; f_{3456}^{\sharp} \otimes f_{46}^{\sharp}); \sigma}"{description}, from=3-2, to=4-2]
	\arrow["{ (copy_{c_{12}c_2}; c_{12}c_2 \otimes f_{1234} \otimes f_{24}) \otimes a_{34} a_4}"{description}, from=4-1, to=5-1]
	\arrow["\pi", from=4-2, to=5-2]
	\arrow["\pi"{description}, from=5-1, to=5-2]
\end{tikzcd}\]
			
		\end{claim}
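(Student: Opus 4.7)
The plan is to compute both sides of the claimed commutative diagram and reduce them to the same expression, using the compatibility conditions of the $xy$-squares $s$ and $t$ together with determinism of the morphism $f_{13}$.

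First, I would simplify the left column. The last two steps---copying $c_{12} c_2$, applying $f_{1234} \otimes f_{24}$ on one copy, and then projecting away the other copy---collapse to simply $f_{1234} \otimes f_{24} \otimes a_{34} \otimes a_4$, by counitality of the copy map with respect to $del$. Thus the left path equals $\phi \,;\, s \,;\, (f_{1234} \otimes f_{24} \otimes a_{34} \otimes a_4)$, where $\phi \colon c_1 a_5 \to c_1 a_3$ denotes the composite of the first two arrows of the left column. Now apply condition (b) of the $xy$-square $s$: this rewrites $s \,;\, (f_{1234} \otimes f_{24} \otimes a_{34} \otimes a_4)$ as $(f_{13} \otimes a_3) \,;\, f_{34}^{\flat}$. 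Hence the left path reduces to $\phi \,;\, (f_{13} \otimes a_3) \,;\, f_{34}^{\flat}$.

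Next, observe that $\phi$ is built from $copy_{c_1}$, $f_{13}$, and $f_{35}^{\sharp}$, and that $f_{13}$ is deterministic. Using the standard identity $copy_{c_1};\, (c_1 \otimes f_{13});\, (f_{13} \otimes c_3) = f_{13};\, copy_{c_3}$, which is exactly the statement that $f_{13}$ is a comonoid morphism, I can rewrite $\phi \,;\, (f_{13} \otimes a_3)$ as $(f_{13} \otimes a_5) \,;\, (copy_{c_3} \otimes a_5) \,;\, (c_3 \otimes f_{35}^{\sharp})$. Therefore the left path equals $(f_{13} \otimes a_5) \,;\, (copy_{c_3} \otimes a_5) \,;\, (c_3 \otimes f_{35}^{\sharp}) \,;\, f_{34}^{\flat}$.

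For the right column, I would factor out the initial $(f_{13} \otimes a_5)$ and focus on the remaining composite from $c_3 a_5$ down to $c_{34} c_4 a_{34} a_4$, namely $t$ followed by the map $\alpha := copy_{c_{34} c_4 a_{56} a_6};\,\bigl(c_{34} c_4 a_{56} a_6 \otimes (\sigma; f_{3456}^{\sharp} \otimes f_{46}^{\sharp})\bigr);\,\sigma;\,\pi$. Because the copy of $a_{56} a_6$ belonging to the first factor is discarded by the final projection $\pi$, counitality of $del$ reduces $\alpha$ to $copy_{c_{34} c_4}$ followed by applying $f_{3456}^{\sharp} \otimes f_{46}^{\sharp}$ to the duplicated $c_{34} c_4$ together with the single $a_{56} a_6$. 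This is precisely the right-hand side of condition (c) applied to the $xy$-square $t$, whose left-hand side is $(copy_{c_3} \otimes a_5);\,(c_3 \otimes f_{35}^{\sharp});\,f_{34}^{\flat}$. Hence $t;\alpha = (copy_{c_3} \otimes a_5);\,(c_3 \otimes f_{35}^{\sharp});\,f_{34}^{\flat}$, and the right path equals $(f_{13} \otimes a_5) \,;\, (copy_{c_3} \otimes a_5) \,;\, (c_3 \otimes f_{35}^{\sharp}) \,;\, f_{34}^{\flat}$, matching the left.

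The main obstacle is bookkeeping: carefully matching the various symmetries $\sigma$, copy placements, and projections in the statement with the cleaner rearranged expressions above. The substantive content, however, is just two applications of the $xy$-square axioms---condition (b) for $s$ and condition (c) for $t$---bridged by the interaction of the deterministic $y$-morphism $f_{13}$ with copying. No use of conditional products or uniqueness of conditionals is needed at this stage; that will enter only in the subsequent construction of the map $c_1 a_5 \to c_{12} c_2 a_{56} a_6$ as a conditional product of the two marginals whose agreement on $c_{34} c_4 a_{34} a_4$ this claim establishes.
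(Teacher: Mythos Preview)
Your proof is correct and follows exactly the route the paper's terse proof notes gesture at: the paper only says ``this relies on the fact that $s$ and $t$ are squares, with the bottom $x$-morphism in $s$ being the same as the top $x$-morphism in $t$,'' and you have unpacked this into the two concrete ingredients---the top-left square axiom for $s$ and the bottom square axiom for $t$---bridged by determinism of $f_{13}$. Your observation that the simplification of $\alpha$ via counitality recovers precisely the right vertical of $t$'s condition (c) is the key identification, and your final remark that no conditionals are used here (only later) is accurate.
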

		\begin{proof}[Proof notes.]
			This relies on the fact that $s$ and $t$ are squares, with the bottom $x$-morphism in $s$ being the same as the top $x$-morphism in $t$.
		\end{proof}

		Then, let $\alpha :   c_1 a_5 \rightarrow  c_{12} c_2 \otimes c_{34}c_4 a_{34}a_4 \otimes  a_{56}a_6$ be the morphism defined as the conditional product, over $c_{34} c_4 a_{34} a_4$, of the composites $  c_1 a_5\rightarrow c_{12} c_2\otimes c_{34}c_4 a_{34}a_4$ and $  c_1 a_5 \rightarrow c_{34}c_4 a_{34}a_4 \otimes  a_{56}a_6$ in the diagram above.

		Finally, let $\frac{s}{t} :   c_1 a_5 \rightarrow     c_{12} c_2 a_{56} a_6$ be defined as the composite $\alpha; del_{c_{34} c_4 a_{34} a_4}$, i.e. the marginal of the conditional product above.

		\begin{claim}\label{claim_regen_for_y_comp}
			The map $\alpha: c_1 a_5 \rightarrow  c_{12} c_2 \otimes c_{34}c_4 a_{34}a_4 \otimes  a_{56}a_6$ is equal to the composite $\frac{s}{t}; (copy_{c_{12} c_2}; c_{12} c_2 \otimes f_{1234} \otimes f_{24}) \otimes a_{56} a_6 \, ; c_{12} c_2 \otimes (copy_{c_{34} c_4 a_{56} a_6} ; f_{3456}^{\sharp} f_{46}^{\sharp})$.
		\end{claim}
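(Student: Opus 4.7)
The plan is to apply Lemma \ref{lemma_CP_and_regen} twice to the conditional product $\alpha = \beta_1 \otimes_Y \beta_2$, where $\beta_1 : c_1 a_5 \rightarrow c_{12}c_2 \otimes Y$ and $\beta_2 : c_1 a_5 \rightarrow Y \otimes a_{56}a_6$ are the left and right columns of the diagram in Claim \ref{claim_diag_comm_arena_y_comp}, with $Y = c_{34}c_4 a_{34}a_4$. The two applications successively rewrite the $c_{34}c_4$ component (which, under $\beta_1$, is a deterministic function of $c_{12}c_2$ via $f_{1234}, f_{24}$) and the $a_{34}a_4$ component (which, under $\beta_2$, is a deterministic function of $c_{34}c_4 a_{56}a_6$ via $f_{3456}^{\sharp}, f_{46}^{\sharp}$) as their deterministic reconstructions. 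The composition of these rewrites realises the full regeneration appearing in the statement, applied after projection to $c_{12}c_2 \otimes a_{56}a_6$, which is precisely $\frac{s}{t}$ by definition.

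For the first rewrite, the left column of the diagram presents $\beta_1 = \chi_1; ((copy_{c_{12}c_2}; c_{12}c_2 \otimes f_{1234} \otimes f_{24}) \otimes a_{34}a_4)$ for some Markov morphism $\chi_1: c_1 a_5 \rightarrow c_{12}c_2 a_{34}a_4$. After reordering tensor factors, this fits the shape of Lemma \ref{lemma_det_almost_sure_equality} with deterministic map $d = del_{a_{34}a_4}; f_{1234} \otimes f_{24}$, so the endomorphism $f_A$ of $c_{12}c_2 \otimes c_{34}c_4 a_{34}a_4$ that discards $c_{34}c_4$ and regenerates it from a copy of $c_{12}c_2$ satisfies $f_A = \mathrm{id}$ $\beta_1$-almost surely. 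Lemma \ref{lemma_CP_and_regen} then yields $\alpha = \alpha; (f_A \otimes a_{56}a_6)$. Symmetrically, the right column of the diagram writes $\beta_2$ as a Markov morphism $\chi_2: c_1 a_5 \rightarrow c_{34}c_4 a_{56}a_6$ followed by the deterministic regeneration $copy_{c_{34}c_4 a_{56}a_6}; c_{34}c_4 a_{56}a_6 \otimes (f_{3456}^{\sharp} \otimes f_{46}^{\sharp})$ extracting $a_{34}a_4$. By the same lemma, the analogous endomorphism $f_B$ of $c_{34}c_4 a_{34}a_4 \otimes a_{56}a_6$ is identity $\beta_2$-almost surely, and the symmetric version of Lemma \ref{lemma_CP_and_regen} (stated in the remark following it) gives $\alpha = \alpha; (c_{12}c_2 \otimes f_B)$.

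Composing both identities yields $\alpha = \alpha; (f_A \otimes a_{56}a_6); (c_{12}c_2 \otimes f_B)$. A direct unfolding in the graphical calculus shows that $(f_A \otimes a_{56}a_6); (c_{12}c_2 \otimes f_B)$ factors as ``discard $c_{34}c_4 a_{34}a_4$, then regenerate it by the deterministic tail appearing in the statement''. Combined with $\alpha; \pi_{c_{12}c_2 \otimes a_{56}a_6} = \frac{s}{t}$, which holds by definition of $\frac{s}{t}$ as the corresponding marginal of $\alpha$, this completes the proof. The main obstacle is bookkeeping: the tensor reorderings that cast $\beta_1, \beta_2$ into the exact format of Lemma \ref{lemma_det_almost_sure_equality}, and the verification that the final composite collapses as stated; both are routine string-diagram manipulations using counitality and coassociativity of $copy$ together with determinism of $f_{1234}, f_{24}, f_{3456}^{\sharp}$, and $f_{46}^{\sharp}$.
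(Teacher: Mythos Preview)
Your proposal is correct and follows essentially the same approach as the paper: two applications of Lemma \ref{lemma_CP_and_regen} (the second via its symmetric form), each justified by Lemma \ref{lemma_det_almost_sure_equality}, with the first regenerating $c_{34}c_4$ from $c_{12}c_2$ via $f_{1234}\otimes f_{24}$ and the second regenerating $a_{34}a_4$ from $c_{34}c_4 a_{56}a_6$ via $f_{3456}^{\sharp}\otimes f_{46}^{\sharp}$, followed by a sliding/functoriality step. Your explicit factorisation through the marginal $\frac{s}{t}$ is a nice touch the paper leaves implicit.
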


		\begin{proof}
			We use Lemma \ref{lemma_CP_and_regen} twice; let us first apply it with $X= c_1 a_5$, $A = c_{12} c_2$, $B= c_{34}c_4 a_{34}a_4$, $C = a_{56}a_6$, with the maps $\phi: X \rightarrow A \otimes B$ and $\psi: X \rightarrow B \otimes C$ defined as the composites $  c_1 a_5\rightarrow c_{12} c_2\otimes c_{34}c_4 a_{34}a_4$ and $  c_1 a_5 \rightarrow c_{34}c_4 a_{34}a_4 \otimes  a_{56}a_6$ in the diagram of Claim \ref{claim_diag_comm_arena_y_comp}. The map $f: A \otimes B \rightarrow A \otimes B$ is the composite $c_{12} c_2 \,\, c_{34}c_4 a_{34}a_4 \xrightarrow{del_{c_{34}c_4}} c_{12} c_2 \,\, a_{34}a_4 \xrightarrow{(copy_{c_{12}c_2}; f_{1234}\otimes f_{24}) \otimes a_{34} a_4} c_{12} c_2 \,\, c_{34}c_4 a_{34}a_4  $. By Lemma \ref{lemma_det_almost_sure_equality}, the map $f$ satisfies the "almost sure equality" hypothesis of Lemma \ref{lemma_CP_and_regen}.

			We then use the symmetric statement of Lemma \ref{lemma_CP_and_regen}, with the same setting, except that we consider $g: B \otimes C \rightarrow B \otimes C$, defined as the composite $c_{34}c_4 a_{34}a_4 \,\,  a_{56}a_6 \xrightarrow{del_{a_{34}a_4}} c_{34}c_4  \,\,  a_{56}a_6 \xrightarrow{copy_{c_{34}c_4  a_{56}a_6}} c_{34}c_4  \, a_{56}a_6 \, c_{34}c_4  \, a_{56}a_6 \xrightarrow{\sigma; c_{34}c_4 \otimes f_{3456}^{\sharp} \otimes f_{46}^{\sharp} \otimes a_{56} a_6}c_{34}c_4 a_{34}a_4 \,\,  a_{56}a_6$.

			We conclude by sliding/functoriality of the tensor product. \end{proof}

		\begin{claim}\label{claim_y_comp_well_def_arena_moore}
			The composite $\frac{s}{t}$ makes the required squares commute.
		\end{claim}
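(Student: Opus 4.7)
The three commutativity conditions required of the $xy$-square $\frac{s}{t}$ will be verified separately. The first condition is purely deterministic and expresses compatibility in $\CC_{det}$ between the composite lens component and the top/bottom $x$-morphisms; it follows immediately by pasting the corresponding deterministic squares for $s$ and $t$, using functoriality of composition in $\CC_{det}$, and requires no Markov-category machinery.

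For the second condition (compatibility with $f_{56}^{\flat}$), the plan is to exploit the marginal property of the conditional product: $\alpha$'s marginal on $c_{34}c_4 a_{34}a_4 \otimes a_{56}a_6$ is $\psi$ from the diagram of Claim \ref{claim_diag_comm_arena_y_comp}. Claim \ref{claim_regen_for_y_comp} implies that the $c_{34}c_4$-component of $\alpha$ is almost surely $(f_{1234}\otimes f_{24})$ applied to its $c_{12}c_2$-component; combining Lemma \ref{lemma_CP_and_regen} with Lemma \ref{lemma_det_almost_sure_equality}, we will replace $(f_{1234};f_{3456})\otimes(f_{24};f_{46})$ acting on the $c_{12}c_2$-component of $\alpha$ by $(f_{3456}\otimes f_{46})$ acting on its $c_{34}c_4$-component. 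Marginalizing over $c_{12}c_2 \otimes a_{34}a_4$, the copy-then-$\sharp$ portion used to build $\psi$ gets collapsed by $del_{a_{34}a_4}$, leaving $(f_{13}\otimes a_5); t; (f_{3456}\otimes f_{46}\otimes a_{56}a_6)$. Condition 2 of $t$ will then rewrite this as $(f_{15}\otimes a_5); f_{56}^{\flat}$.

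For the third condition (compatibility with $f_{12}^{\flat}$), the plan is to use Claim \ref{claim_regen_for_y_comp} in the opposite direction. Unfolding the lens composition formula for the $\sharp$-parts $f_{1256}^{\sharp}$ and $f_{26}^{\sharp}$, the deterministic intermediate steps $(c_{12}\otimes f_{1234})$ and $(c_{2}\otimes f_{24})$ appearing inside them will coincide exactly with the regeneration maps that pass from $\frac{s}{t}$ to $\alpha$. Hence $\frac{s}{t}; copy_{c_{12}c_2}; \sigma;(f_{1256}^{\sharp}\otimes f_{26}^{\sharp})$ will equal $\alpha; copy_{c_{12}c_2}; \sigma;(f_{1234}^{\sharp}\otimes f_{24}^{\sharp})$ together with the appropriate $del$ maps. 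Marginalizing on $c_{12}c_2 \otimes c_{34}c_4 a_{34}a_4$ replaces $\alpha$ by $\phi$; the redundant $(copy_{c_{12}c_2}; c_{12}c_2\otimes f_{1234}\otimes f_{24})$ tail of $\phi$ then gets cancelled by $del_{c_{34}c_4}$. What remains is $(f_{13}\otimes a_5)$ followed by $(c_1\otimes f_{35}^{\sharp})$, followed by $s; copy_{c_{12}c_2}; \sigma;(f_{1234}^{\sharp}\otimes f_{24}^{\sharp})$. Condition 3 of $s$ will rewrite the $s$-tail as $copy_{c_1};(c_1\otimes f_{13}^{\sharp}); f_{12}^{\flat}$; pushing copies of $c_1$ through the deterministic maps $f_{13}$ and $f_{35}^{\sharp}$ and using coassociativity of $copy$ will collapse the preceding expression into $copy_{c_1}\otimes a_5;(c_1\otimes f_{15}^{\sharp}); f_{12}^{\flat}$, the desired right-hand side.

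The main obstacle will be the bookkeeping in the third condition: expanding the composite $\sharp$-map from the lens composition formula, transferring everything through $\alpha$ and $\phi$ via Claim \ref{claim_regen_for_y_comp}, identifying cancellations between the regeneration maps and the subsequent $del$ maps, and finally applying condition 3 of $s$ combined with coassociativity of copying. The key simplification is that the \emph{regenerate-then-discard} pattern (central for Claim \ref{claim_regen_for_y_comp}) makes the intermediate $c_{34}c_4 a_{34}a_4$ data collapse whenever we marginalize; once this is recognized, each of the three conditions reduces to applying the analogous condition of $s$ or $t$ to a simplified expression.
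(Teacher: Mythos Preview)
Your proposal is correct and follows essentially the same approach as the paper: for each of the three conditions you invoke exactly the same ingredients—pasting the deterministic squares for condition~(a), using the marginal property of the conditional product together with Claim~\ref{claim_regen_for_y_comp}/Lemma~\ref{lemma_CP_and_regen} (via determinism of $f_{1234},f_{24}$ and $f_{3456}^{\sharp},f_{46}^{\sharp}$) to pass between $\frac{s}{t}$, $\alpha$, and $\phi$, and then applying the corresponding $xy$-square condition of $s$ or $t$. The paper organizes this as a diagram chase whereas you phrase it as a chain of rewrites, but the logical content is the same.
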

		\begin{proof}
			This follows from the fact that the marginals of a conditional product are, by construction, equal to its factors; of course, we also use the hypothesis that $s$ and $t$ are $xy$-squares.

			Let us start with the top-left commuting square in the definition of $xy$-squares:

			\begin{center}
\[\begin{tikzcd}
	{c_1 a_5} &&&& {c_{12}c_2 \,\, a_{56} a_6} \\
	&&& {c_{12}c_2 \, c_{34} c_4 a_{34} a_4 \,  a_{56} a_6} \\
	&&& {c_{12}c_2 \, a_{34} a_4 \,  a_{56} a_6} \\
	&& {c_{12}c_2 \, c_{34} c_4 a_{34} a_4 \,  a_{56} a_6} \\
	\\
	{c_3 a_5} &&&& {c_{34}c_4 \,\, a_{56} a_6} \\
	\\
	{c_5 a_5} &&&& {c_{56}c_6 \,\, a_{56} a_6}
	\arrow["{\frac{s}{t}}", from=1-1, to=1-5]
	\arrow["\alpha", from=1-1, to=2-4]
	\arrow["\alpha", from=1-1, to=4-3]
	\arrow["{f_{13} \otimes a_5}"{description}, from=1-1, to=6-1]
	\arrow["{f_{1234} \otimes f_{24} \otimes a_{56} a_6}"{description}, from=1-5, to=6-5]
	\arrow["\pi", from=2-4, to=1-5]
	\arrow["{del_{c_{34} c_4}}"{description}, from=2-4, to=3-4]
	\arrow["{copy_{c_{12}c_2}; c_{12} c_2 \otimes f_{1234} \otimes f_{24} \otimes a_{56} a_6}"{description}, from=3-4, to=4-3]
	\arrow["\pi", from=4-3, to=6-5]
	\arrow["t"', from=6-1, to=6-5]
	\arrow["{f_{35} \otimes a_5}"{description}, from=6-1, to=8-1]
	\arrow["{f_{3456} \otimes f_{46} \otimes a_{56} a_6}"{description}, from=6-5, to=8-5]
	\arrow["{f_{56}^{\flat}}"', from=8-1, to=8-5]
\end{tikzcd}\]
			\end{center}

In the diagram above, the bottom rectangle commutes because $t$ is a square. Let us now focus on the inside of the top outer rectangle. The top triangle commutes by definition of $\frac{s}{t}$ as the marginal of the map $\alpha$. The bottom-left irregular quadrilateral commutes because the marginal of $\alpha$ on $c_{34} c_4 a_{56} a_6$ is prescribed (recall that $\alpha$ is a conditional product). The irregular pentagon on the left commutes thanks to basic properties of $copy$ and $del$. Finally, the irregular quadrilateral in the middle commutes thanks to Claim \ref{claim_regen_for_y_comp}, i.e. thanks to Lemma \ref{lemma_CP_and_regen}.

Then, for the top-right square in the definition of $xy$-squares, it suffices to concatenate, and invoke functoriality of the tensor product:
\begin{center}
	
\[\begin{tikzcd}
	{c_1} && {c_{12} c_2} \\
	{c_3} && {c_{34} c_4} \\
	{c_5} && {c_{56} c_6}
	\arrow["{f_{12}}", from=1-1, to=1-3]
	\arrow["{f_{13}}"{description}, from=1-1, to=2-1]
	\arrow["{f_{1234} \otimes f_{24}}"{description}, from=1-3, to=2-3]
	\arrow["{f_{34}}", from=2-1, to=2-3]
	\arrow["{f_{35}}"{description}, from=2-1, to=3-1]
	\arrow["{f_{3456} \otimes f_{46}}"{description}, from=2-3, to=3-3]
	\arrow["{f_{56}}", from=3-1, to=3-3]
\end{tikzcd}\]
\end{center}

Finally, let us consider the bottom commuting square in the definition of $xy$-squares.
\begin{center}
	
\[\begin{tikzcd}
	{c_1 a_5} &&&& {c_{12}c_2 \,\, a_{56} a_6} \\
	&& {c_{12}c_2 \,\, c_{34} c_4 a_{34} a_4\,\, a_{56} a_6} \\
	{c_1 c_1 a_5} &&&& {c_{12} c_2 \,\, c_{12}c_2 \,\, a_{56} a_6} \\
	{c_1 c_3 a_5} & {c_{12}c_2 \,\, c_{34} c_4 a_{34} a_4\,\, a_{56} a_6} \\
	&&&& {c_{12}c_2 \,\, c_{34} c_4 \,\, a_{56} a_6} \\
	{c_1 a_3} &&&& {c_{12}c_2 \,\, a_{34} a_4} \\
	\\
	{c_1 a_1} &&&& {c_{12}c_2 \,\, a_{12} a_2}
	\arrow["{\frac{s}{t}}", from=1-1, to=1-5]
	\arrow["\alpha"{description}, from=1-1, to=2-3]
	\arrow["{copy_{c_1}}"{description}, from=1-1, to=3-1]
	\arrow["\alpha"{description}, from=1-1, to=4-2]
	\arrow["{copy_{c_{12} c_2}}"{description}, from=1-5, to=3-5]
	\arrow["\pi"', shift right=2, from=2-3, to=5-5]
	\arrow["{c_1 \otimes f_{13} \otimes a_5}"{description}, from=3-1, to=4-1]
	\arrow["{c_{12} c_2 \otimes f_{1234} \otimes f_{24} \otimes a_{56} a_6}"{description}, from=3-5, to=5-5]
	\arrow["{c_1 \otimes f_{35}^{\sharp}}"{description}, from=4-1, to=6-1]
	\arrow["\pi"', from=4-2, to=6-5]
	\arrow["{c_{12} c_2 \otimes (\sigma; f_{3456}^{\sharp} \otimes f_{46}^{\sharp})}"{description}, from=5-5, to=6-5]
	\arrow["s"', from=6-1, to=6-5]
	\arrow["{copy_{c_1}; c_1 \otimes f_{13}^{\sharp}}"{description}, from=6-1, to=8-1]
	\arrow["{copy_{c_{12} c_1}; \sigma; c_{12} c_2 \otimes f_{1234}^{\sharp} \otimes f_{24}^{\sharp}}"{description}, from=6-5, to=8-5]
	\arrow["{f_{12}^{\flat}}"', from=8-1, to=8-5]
\end{tikzcd}\]

\end{center}

In the diagram above, the bottom rectangle commutes thanks to $s$ being an $xy$-square. Let us then focus on the top part of this diagram. The bottom-left polygon commutes thanks to $\alpha$ being a conditional product (so its marginals are known). The top-right polygon commutes thanks to Lemma \ref{lemma_CP_and_regen}, which can be applied because the maps $f_{1234}$ and $f_{24}$ are deterministic. Finally, the middle pentagon commutes thanks to Lemma \ref{lemma_CP_and_regen} again, relying on determinism of the maps $f_{3456}^{\sharp}$ and $f_{46}^{\sharp}$.
		\end{proof}

		\begin{remark}
			Here, we did not need the maps $f_{1234}^{\sharp}$ and $f_{24}^{\sharp}$ to be deterministic. This observation will be crucial when constructing the triple category $ArenaSys^{\mathrm{Moore}}_{\CC}$. See Claim \ref{claim_y_comp_well_def_arenasys}.
		\end{remark}
		
		\begin{claim}\label{claim_y_comp_assoc_arena_moore}
			This $y$-composition of $xy$ squares is associative, and there are identity squares.
		\end{claim}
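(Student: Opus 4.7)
The plan. I would split the claim into the existence of identity $xy$-squares and the associativity of $y$-composition, then handle the latter using the uniqueness clause of the conditional product.

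\textbf{Identity squares.} For each $x$-morphism $\xi = (a_{12}, c_{12}, f_{12}, f_{12}^{\flat}): \binom{a_1}{c_1} \rightrightarrows \binom{a_2}{c_2}$, I propose the identity $xy$-square $\mathrm{id}_{\xi}$ whose left and right $y$-boundaries are identity deterministic lenses, whose top and bottom $x$-boundaries are both $\xi$, whose diagonal lens $(f_{1234}, f_{1234}^{\sharp})$ is the identity on $\binom{a_{12}}{c_{12}}$, and whose map component is simply $s := f_{12}^{\flat}$. The three commuting squares of point 5 of Theorem \ref{theo_Arena_C} then reduce to trivial identities after using counitality of $copy_{c_1}$ against the projection $\pi_{a_1} = f_{13}^{\sharp}$. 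To show that $\mathrm{id}_{\xi}$ is a two-sided identity, place a $y$-composable square $t$ below $\mathrm{id}_{\xi}$: the diagonal part of $\frac{\mathrm{id}_{\xi}}{t}$ is the diagonal of $t$ by unitality of lens composition in $\CC_{det}$, and for the map part I would unwind the conditional product $\alpha$ and apply Lemma \ref{lemma_CP_and_regen} with Lemma \ref{lemma_det_almost_sure_equality} to absorb the ``deterministic regenerations'' contributed by $\mathrm{id}_{\xi}$, exhibiting $\alpha$ as a map whose marginal on $c_{12}c_2 a_{56}a_6$ is exactly the map component of $t$. The case where $\mathrm{id}_{\xi}$ sits below is symmetric.

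\textbf{Associativity.} Given three $y$-composable $xy$-squares $s, t, u$, the diagonal lenses of $\frac{s}{\frac{t}{u}}$ and $\frac{\frac{s}{t}}{u}$ agree because composition of deterministic lenses in $\CC_{det}$ is associative. For the map components, my strategy is to realize both bracketings as marginals of a single \emph{triple conditional product} over the two shared interfaces $c_{34}c_4 a_{34}a_4$ and $c_{56}c_6 a_{56}a_6$. The uniqueness clause of the conditional product implies that any map into $c_{12}c_2 \otimes (c_{34}c_4 a_{34}a_4) \otimes (c_{56}c_6 a_{56}a_6) \otimes a_{78}a_8$ whose three pairwise marginals are the prescribed ones and which displays the correct pairwise conditional independencies is unique. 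I would verify this compatibility by expanding each intermediate $\alpha$ using Claim \ref{claim_regen_for_y_comp} (which says that $\alpha$ reconstructs its marginal on the shared interface via the deterministic maps $f_{1234}, f_{24}$ on one side and $f_{3456}^{\sharp}, f_{46}^{\sharp}$ on the other), so that subsequent conditional-product formations on either bracketing see the same shared-interface marginal. Applying Lemma \ref{lemma_CP_and_regen} in both directions lets me rewrite each bracketing as the canonical triple conditional product, followed by marginalization of both shared interfaces, giving equality.

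\textbf{Main obstacle.} There is no new conceptual ingredient beyond (i) the uniqueness of conditional products and (ii) the deterministic-regeneration lemma. The hard part is purely combinatorial: keeping track of the many wires, inserting $copy$/$del$ pairs in the right places, and choosing an ordering of the rewrites so that the two bracketings simplify to visibly the same triple conditional product. I expect the proof to be a long diagram chase rather than a conceptually surprising argument, and the construction of identity squares to be comparatively straightforward once the formula $s = f_{12}^{\flat}$ is identified.
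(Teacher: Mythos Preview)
Your proposal is correct and follows essentially the same route as the paper: identity $xy$-squares are built from identity lenses together with the map component $s = f_{12}^{\flat}$, and associativity is obtained by exhibiting both bracketings as marginals of a single threefold conditional product over the two intermediate interfaces $c_{34}c_4 a_{34}a_4$ and $c_{56}c_6 a_{56}a_6$. The one ingredient the paper makes explicit that you leave buried in the ``combinatorial'' step is \emph{compatibility of conditional products with precomposition by deterministic maps}: when you pass from $\frac{s}{t}$ (whose defining conditional product has domain $c_1 a_5$) to $\frac{\frac{s}{t}}{u}$, you must first precompose with deterministic maps built from $copy$, $f_{35}$, $f_{57}^{\sharp}$, etc., landing in domain $c_1 a_7$, and you need to know that this precomposition preserves the conditional-product structure so that your rewriting via Claim~\ref{claim_regen_for_y_comp} and Lemma~\ref{lemma_CP_and_regen} still applies. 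This is exactly what lets the two nested conditional products collapse to the same threefold one; make sure to invoke it explicitly in your diagram chase.
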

		\begin{proof}[Proof notes.]
			Identity squares (for $y$-composition) are defined using identities of $\CC$. Associativity follows from associativity of lens composition, associativity of conditional products \cite[Proposition 4.3.4]{gleaves} and compatibility of conditional products with precomposing by deterministic maps. What we use is the following: given a composition setting such as depicted below

\[\begin{tikzcd}
	\begin{array}{c} {\begin{pmatrix} a_1 \\ c_1  \\ \end{pmatrix}} \end{array} && \begin{array}{c} {\begin{pmatrix} a_2 \\ c_2  \\ \end{pmatrix}} \end{array} \\
	& s \\
	\begin{array}{c} {\begin{pmatrix} a_3 \\ c_3  \\ \end{pmatrix}} \end{array} && \begin{array}{c} {\begin{pmatrix} a_4 \\ c_4  \\ \end{pmatrix}} \end{array} \\
	\begin{array}{c} {\begin{pmatrix} a_3 \\ c_3  \\ \end{pmatrix}} \end{array} && \begin{array}{c} {\begin{pmatrix} a_4 \\ c_4  \\ \end{pmatrix}} \end{array} \\
	& t \\
	\begin{array}{c} {\begin{pmatrix} a_5 \\ c_5  \\ \end{pmatrix}} \end{array} && \begin{array}{c} {\begin{pmatrix} a_6 \\ c_6  \\ \end{pmatrix}} \end{array} \\
	\begin{array}{c} {\begin{pmatrix} a_5 \\ c_5  \\ \end{pmatrix}} \end{array} && \begin{array}{c} {\begin{pmatrix} a_6 \\ c_6  \\ \end{pmatrix}} \end{array} \\
	& u \\
	\begin{array}{c} {\begin{pmatrix} a_7 \\ c_7  \\ \end{pmatrix}} \end{array} && \begin{array}{c} {\begin{pmatrix} a_8 \\ c_8  \\ \end{pmatrix}} \end{array}
	\arrow["{(a_{12}, c_{12}, f_{12}, f^{\flat}_{12})}", shift left=2, from=1-1, to=1-3]
	\arrow[shift right=2, from=1-1, to=1-3]
	\arrow["{(f_{13}, f_{13}^{\sharp})}"', shift right=2, from=1-1, to=3-1]
	\arrow[shift right=2, from=1-3, to=3-3]
	\arrow[shift right=2, from=3-1, to=1-1]
	\arrow[shift left=2, from=3-1, to=3-3]
	\arrow["{(a_{34}, c_{34}, f_{34}, f^{\flat}_{34})}"', shift right=2, from=3-1, to=3-3]
	\arrow["{(f_{24}, f_{24}^{\sharp})}"', shift right=2, from=3-3, to=1-3]
	\arrow["{(a_{34}, c_{34}, f_{34}, f_{34}^{\flat})}", shift left=2, from=4-1, to=4-3]
	\arrow[shift right=2, from=4-1, to=4-3]
	\arrow["{(f_{35}, f_{35}^{\sharp})}"', shift right=2, from=4-1, to=6-1]
	\arrow[shift right=2, from=4-3, to=6-3]
	\arrow[shift right=2, from=6-1, to=4-1]
	\arrow[shift left=2, from=6-1, to=6-3]
	\arrow["{(a_{56}, c_{56}, f_{56}, f_{56}^{\flat})}"', shift right=2, from=6-1, to=6-3]
	\arrow["{(f_{46}, f_{46}^{\sharp})}"', shift right=2, from=6-3, to=4-3]
	\arrow["{(a_{56}, c_{56}, f_{56}, f_{56}^{\flat})}"', shift right=2, from=7-1, to=7-3]
	\arrow[shift left=2, from=7-1, to=7-3]
	\arrow["{(f_{57}, f_{57}^{\sharp})}"', shift right=2, from=7-1, to=9-1]
	\arrow[shift right=2, from=7-3, to=9-3]
	\arrow[shift right=2, from=9-1, to=7-1]
	\arrow["{(a_{78}, c_{78}, f_{78}, f_{78}^{\flat})}"', shift right=2, from=9-1, to=9-3]
	\arrow[shift left=2, from=9-1, to=9-3]
	\arrow["{(f_{68}, f_{68}^{\sharp})}"', shift right=2, from=9-3, to=7-3]
\end{tikzcd}\]

the maps $f_{13}$, $f_{35}$, $f_{57}$, $f_{35}^{\sharp}$, $f_{57}^{\sharp}$ are, by definition, deterministic. Thus, the conditional products used to define $\frac{s}{t}$, $\frac{t}{u}$, when precomposed with those maps and with copy maps, remain conditional products. From this observation, one can then show that both $\frac{\frac{s}{t}}{u}$ and $\frac{s}{\frac{t}{u}}$ are marginals of  the following threefold conditional product: $$(f_s; s') \otimes_{c_{34}c_4 a_{34} a_4} (f_t; t') \otimes_{c_{56} c_6 a_{56} a_6} (f_u ; u'),$$
where:

\begin{enumerate}
	\item The maps $f_s: c_1 a_7 \rightarrow c_1 a_3$, $f_t: c_1 a_7 \rightarrow c_3 a_5$, $f_u: c_1 a_7 \rightarrow c_5 a_7$, are deterministic.
	
	\item We have $f_s = copy_{c_1} \otimes a_7; c_1 \otimes f_{13}\otimes a_7; c_1 \otimes copy_{c_3} \otimes a_7; c_1 \otimes c_3 \otimes f_{35} \otimes a_7;  c_1 \otimes c_3 \otimes f_{57}^{\sharp} ; c_1 \otimes f_{35}^{\sharp};$
	\item We have $f_t = f_{13} \otimes a_7; copy_{c_3} \otimes a_7; c_3 \otimes f_{35} \otimes a_7; c_3 \otimes f_{57}^{\sharp};$
	\item We have $f_u = (f_{13}; f_{35}) \otimes a_7$

	\item The maps $s':c_1 a_3 \rightarrow c_{12}c_2 \otimes c_{34}c_4 a_{34} a_4$, $t': c_3 a_5 \rightarrow c_{34}c_4 a_{34} a_4 \otimes c_{56}c_6 a_{56} a_6$ and $u': c_5 a_7 \rightarrow c_{56}c_6 a_{56} a_6 \otimes a_{78} a_8$, are defined by postcomposing $s$, $t$, and $u$, respectively, with deterministic maps built from $copy$, $f_{1234}$, $f_{24}$, $f_{3456}$, $f_{56}$, $f_{3456}^{\sharp}$, $f_{46}^{\sharp}$, $f_{5678}^{\sharp}$, and $f_{68}^{\sharp}$. 
	
	For instance, since $s: c_1 a_3 \rightarrow c_{12} c_2 a_{34} a_4$ and $s': c_1 a_3 \rightarrow c_{12}c_2 \otimes c_{34}c_4 a_{34} a_4$, one is looking for a deterministic map $c_{12} c_2 a_{34} a_4 \rightarrow c_{12}c_2 \otimes c_{34}c_4 a_{34} a_4$; such a map can be defined using $copy$, $f_{1234}$ and $f_{24}$. We leave the details to the reader.

\end{enumerate}

This, along with associativity of composition for $y$-morphisms/lenses, shows associativity of $y$-composition for $xy$-squares. \end{proof}

		\begin{prop}\label{prop_interchange_lenses}
			The $xy$ interchange law holds. 
		\end{prop}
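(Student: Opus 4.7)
The plan is to split the verification according to the two components of an $xy$-square: the deterministic parametric lens $(f_{1234}, f_{1234}^{\sharp})$ and the Markov map $s : c_1 \otimes a_3 \rightarrow c_{12} \otimes c_2 \otimes a_{34} \otimes a_4$. On the lens component, $x$-composition is tensor product of deterministic lenses and $y$-composition is sequential composition of deterministic lenses in $\CC_{det}$; since lenses in $\CC_{det}$ form a symmetric monoidal category, $xy$-interchange here is just the standard interchange law in a symmetric monoidal $1$-category, which I would invoke directly.

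The substantive content lies in the Markov map component. Given four squares arranged in a $2 \times 2$ grid and $xy$-composable, with Markov data $s_1, s_2$ on the top row (sharing an $x$-interface) and $t_1, t_2$ on the bottom row, both $(s_1 \mid s_2) ;_y (t_1 \mid t_2)$ and $(s_1 ;_y t_1) \mid (s_2 ;_y t_2)$ are morphisms with the same source and target in $\CC$. I would unwind each side into an explicit composite: the ``first $x$, then $y$'' side is a single conditional product over the wide shared $y$-interface of $s_1 \mid s_2$ and $t_1 \mid t_2$, each of which is itself a copy-composite; the ``first $y$, then $x$'' side is a copy-composite of two separate conditional products.

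To identify the two composites, the strategy is to exhibit them both as marginals of a common ``fourfold'' distribution, obtained by iteratively taking conditional products of $s_1, s_2, t_1, t_2$ over their respective shared interfaces. This parallels what was done for associativity of $y$-composition in Claim \ref{claim_y_comp_assoc_arena_moore}. The ingredients needed are: (i) associativity of conditional products over disjoint conditioning variables (Gleaves, and \cite[Proposition 12.9]{FRITZ-MarkovCats}); (ii) the fact that precomposing a conditional product with a deterministic map yields again a conditional product, which is what makes it possible to slide the copy-composition structure through the conditional products at the shared $x$-interface; and (iii) Lemma \ref{lemma_CP_and_regen} together with Lemma \ref{lemma_det_almost_sure_equality}, used to ``regenerate'' the deterministic data $f_{1234}, f_{24}$, $f_{3456}^{\sharp}, f_{46}^{\sharp}$, etc. at the $y$-interfaces after marginalizing.

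The main obstacle will not be any conceptual difficulty but the bookkeeping of copies, deletions, and symmetries: both composites sit inside a Markov map with many tensor factors, and one must keep careful track of which marginalization and which symmetry realigns them. I expect the cleanest presentation is to first identify the ``universal'' fourfold conditional product, then show each of the two composites is the image of this universal object under a specific combination of marginalizations and deterministic lens maps, and conclude by functoriality of conditional products with respect to deterministic postcomposition.
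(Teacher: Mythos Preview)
Your proposal is in the right spirit and invokes the correct tools (Lemma~\ref{lemma_CP_and_regen}, Claim~\ref{claim_regen_for_y_comp}, determinism of the lens maps), and the lens-component argument is exactly right. However, the framing of a ``universal fourfold conditional product'' obscures where the actual work lies, and if taken literally it does not quite go through: the $x$-direction is \emph{copy-composition}, not a conditional product, so there is no symmetric fourfold conditional product of $s_1, s_2, t_1, t_2$ waiting to be written down. The analogy with Claim~\ref{claim_y_comp_assoc_arena_moore} is misleading here, because in that proof all three squares are $y$-composed and hence all glued by conditional products.

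The paper's proof is asymmetric by design. It defines $\Phi$ as the (single) conditional product underlying $(s \mid u);_y (t \mid v)$ over the combined middle object $c_{45}c_5c_{56}c_6\, a_{45}a_5a_{56}a_6$, and $\Psi$ as the copy-composite of the two conditional products $\alpha$ (underlying $s;_y t$) and $\beta$ (underlying $u;_y v$). After checking that $\Phi$ and $\Psi$ have the same marginals, the crux is a string-diagram manipulation showing that $\Psi$ is \emph{also} a conditional product over that same middle object, whence $\Phi = \Psi$ by uniqueness. This rewriting---turning a copy-composite of two conditional products into one conditional product---is the step your outline does not isolate; it uses Claim~\ref{claim_regen_for_y_comp} (hence determinism of $f_{25}$ and $f_{58}^{\sharp}$) to expose extra copy maps inside $\Psi$, then coassociativity and counitality to reorganise them into the shape of a single conditional product. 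Your ingredient (ii), ``sliding copy-composition through conditional products'', is gesturing at exactly this, but it is the substantive computation rather than a citable fact, and you should plan to carry it out explicitly.
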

		\begin{proof}
			Let $s,t,u,v$ be composable squares as in the diagram below:

\[\begin{tikzcd}
	\begin{array}{c} {\begin{pmatrix} a_1 \\ c_1  \\ \end{pmatrix}} \end{array} && \begin{array}{c} {\begin{pmatrix} a_2 \\ c_2  \\ \end{pmatrix}} \end{array} && \begin{array}{c} {\begin{pmatrix} a_3 \\ c_3  \\ \end{pmatrix}} \end{array} \\
	& s && u \\
	\begin{array}{c} {\begin{pmatrix} a_4 \\ c_4  \\ \end{pmatrix}} \end{array} && \begin{array}{c} {\begin{pmatrix} a_5 \\ c_5 \\ \end{pmatrix}} \end{array} && \begin{array}{c} {\begin{pmatrix} a_6 \\ c_6  \\ \end{pmatrix}} \end{array} \\
	& t && v \\
	\begin{array}{c} {\begin{pmatrix} a_7 \\ c_7  \\ \end{pmatrix}} \end{array} && \begin{array}{c} {\begin{pmatrix} a_8 \\ c_8  \\ \end{pmatrix}} \end{array} && \begin{array}{c} {\begin{pmatrix} a_9 \\ c_9  \\ \end{pmatrix}} \end{array}
	\arrow["{(a_{12}, c_{12},f_{12}, f_{12}^{\flat})}", shift left=2, from=1-1, to=1-3]
	\arrow[shift right=2, from=1-1, to=1-3]
	\arrow["{(f_{14}, f_{14}^{\sharp})}"', shift right=2, from=1-1, to=3-1]
	\arrow["{(a_{23}, c_{23},f_{23}, f_{23}^{\flat})}", shift left=2, from=1-3, to=1-5]
	\arrow[shift right=2, from=1-3, to=1-5]
	\arrow["{(f_{25}, f_{25}^{\sharp})}"{description}, shift right=2, from=1-3, to=3-3]
	\arrow[shift right=2, from=1-5, to=3-5]
	\arrow[shift right=2, from=3-1, to=1-1]
	\arrow["{(a_{45}, c_{45},f_{45}, f_{45}^{\flat})}", shift left=2, from=3-1, to=3-3]
	\arrow[shift right=2, from=3-1, to=3-3]
	\arrow["{(f_{47}, f_{47}^{\sharp})}"', shift right=2, from=3-1, to=5-1]
	\arrow[shift right=2, from=3-3, to=1-3]
	\arrow["{(a_{56}, c_{56},f_{56}, f_{56}^{\flat})}", shift left=2, from=3-3, to=3-5]
	\arrow[shift right=2, from=3-3, to=3-5]
	\arrow["{(f_{58}, f_{58}^{\sharp})}"{description}, shift right=2, from=3-3, to=5-3]
	\arrow["{(f_{36}, f_{36}^{\sharp})}"', shift right=2, from=3-5, to=1-5]
	\arrow[shift right=2, from=3-5, to=5-5]
	\arrow[shift right=2, from=5-1, to=3-1]
	\arrow["{(a_{78}, c_{78},f_{78}, f_{78}^{\flat})}", shift left=2, from=5-1, to=5-3]
	\arrow[shift right=2, from=5-1, to=5-3]
	\arrow[shift right=2, from=5-3, to=3-3]
	\arrow["{(a_{89}, c_{89},f_{89}, f_{89}^{\flat})}", shift left=2, from=5-3, to=5-5]
	\arrow[shift right=2, from=5-3, to=5-5]
	\arrow["{(f_{69}, f_{69}^{\sharp})}"', shift right=2, from=5-5, to=3-5]
\end{tikzcd}\]

			We wish to show that the $xy$-squares $\frac{s | u}{t | v}$ and $\frac{s}{t} | \frac{u}{v}$ are equal. First note that, by functoriality of the tensor product, the $y$-morphism components of $\frac{s | u}{t | v}$ and $\frac{s}{t} | \frac{u}{v}$, of signature ${\begin{pmatrix} a_{12}a_2 a_{23} \\ c_{12} c_2 c_{23}  \\ \end{pmatrix}} \leftrightarrows {\begin{pmatrix} a_{78}a_8 a_{89} \\ c_{78} c_8 c_{89}  \\ \end{pmatrix}} $, are equal.

			 Now, the definitions yield two Markov morphisms, namely $\frac{s | u}{t | v}$ and $\frac{s}{t} | \frac{u}{v}$, both of which are of signature $  c_1 a_7 \rightarrow     c_{12} c_2 c_{23} c_3 \,\, a_{78} a_8 a_{89} a_9$. To simplify notations, let $\star$ denote copy-composition, used, among other things, for $x$-composition of $x$-morphisms and $xy$-squares. Abusing notations, we omit the object over which copy-composition is performed; we assume it is clear from the context.
			 Let $f_{s}$ denote the \emph{deterministic} map $(copy_{c_1}; c_1 \otimes f_{14} \otimes a_7; c_1 \otimes f_{47}^{\sharp})$, whose signature is $c_1 a_7 \rightarrow c_1 a_4$. Let $c_{46} = c_{45} c_5 c_{56}$ and $a_{46} = a_{45} a_5 a_{56}$.

			 \begin{claim}\label{claim_interchange_Phi}
				With the notations above, the map $\frac{s | u}{t | v} = (s;_x u);_y (t;_x v)$ is the marginal on $c_{12} c_2 c_{23} c_3 \,\, a_{78} a_8 a_{89} a_9$ of the map $\Phi: c_1 a_7 \rightarrow c_{12} c_2 c_{23} c_3 \otimes c_{46} c_6 a_{46} a_6 \otimes a_{78} a_8 a_{89} a_9$, defined as  the following conditional product: $$(f_s; s \star u \star (f_{1245} \otimes f_{25} \otimes f_{2356} \otimes f_{36})) \otimes_{c_{46}c_6 a_{46}a_6} (f_{14} \otimes a_7; t \star v \star (f_{4578}^{\sharp} \otimes f_{58}^{\sharp} \otimes f_{5689}^{\sharp} \otimes f_{69}^{\sharp})).$$
			 \end{claim}

			 \begin{proof}
				This follows from the definitions of $x$- and $y$- compositions for $xy$-squares, along with Claim \ref{claim_regen_for_y_comp}.
			 \end{proof}

			 Note that all the maps involved in the definition of the conditional product in the Claim above are deterministic, except possibly $s$, $t$, $u$, $v$, and the common marginal $c_1 a_7 \xrightarrow{(f_{14}; copy_{c_4})\otimes a_7 ; c_4 \otimes f_{47}^{\sharp}} c_4 a_4 \xrightarrow{f_{45}^{\flat} \star f_{56}^{\flat}} c_{46} c_6 a_{46} a_6$. \emph{Also note that the maps $f_{1245}^{\sharp}$, $f_{25}^{\sharp}$, $f_{2356}^{\sharp}$ and $f_{36}^{\sharp}$ do not appear in the definition of $\Phi$.}

			 Now, let us consider $\frac{s}{t} | \frac{u}{v}$. Let $\alpha: c_1 a_7 \rightarrow c_{12} c_2 \otimes c_{45} c_5 a_{45} a_5 \otimes a_{78} a_8$ be the conditional product  whose marginal is, by definition, the map $\frac{s}{t}$. By Claim \ref{claim_regen_for_y_comp}, we know that $\alpha = \frac{s}{t}; f_{s,t}$, where $f_{s,t}: c_{12} c_2 a_{78} a_8 \rightarrow c_{12} c_2 \otimes c_{45}c_5 a_{45} a_5 \otimes a_{78} a_8$ is "the" deterministic map built using $copy$, $f_{1245}$, $f_{25}$, $f_{4578}^{\sharp}$ and $f_{58}^{\sharp}$.

			 \noindent Similarly, let $\beta: c_2 a_8 \rightarrow c_{23} c_3 \otimes c_{56} c_6 a_{56} a_6 \otimes a_{89} a_9$ be equal to $\frac{u}{v}; f_{u, v}$, where $f_{u,v}: c_{23} c_3 \otimes a_{89} a_9 \rightarrow c_{23} c_3 \otimes c_{56} c_6 a_{56} a_6 \otimes a_{89} a_9$ is "the" deterministic map built using $copy$, $f_{2356}$, $f_{36}$, $f_{5689}^{\sharp}$ and $f_{69}^{\sharp}$.

			 \begin{claim}\label{claim_interchange_Psi}

				The map $\frac{s}{t} | \frac{u}{v} = (s;_y t) ;_x (u;_y v)$ is the marginal on $c_{12} c_2 c_{23} c_3 \,\, a_{78} a_8 a_{89} a_9$ of the map $\Psi: c_1 a_7 \rightarrow c_{12} c_2 c_{23} c_3 \otimes c_{46} c_6 a_{46} a_6 \otimes a_{78} a_8 a_{89} a_9$ defined as the following composite: $c_1 a_7 \xrightarrow{\alpha} c_{12} c_2 \otimes c_{45} c_5 a_{45} a_5 \otimes a_{78} a_8  \xrightarrow{\sigma; copy_{c_2 a_8}} c_{12} c_2 \otimes c_{45} c_5 a_{45} a_5 \otimes c_2 a_8 \otimes a_{78} a_8 \xrightarrow{c_{12} c_2 \otimes c_{45} c_5 a_{45} a_5 \otimes \beta \otimes a_{78} a_8} c_{12} c_2 \otimes c_{45} c_5 a_{45} a_5 \otimes (c_{23} c_3 c_{56} c_6 a_{56} a_6  a_{89} a_9) \otimes a_{78} a_8 \xrightarrow{\sigma} c_{12} c_2 c_{23} c_3  c_{46} c_6 a_{46} a_6 a_{78} a_8 a_{89} a_9$.

				(Recall that $c_{46} = c_{45} c_5 c_{56}$ and $a_{46} = a_{45} a_5 a_{56}$.)

			 \end{claim}

			 \begin{proof}
				This follows from the definition of $x$-composition of $xy$-squares as copy-composition.
			 \end{proof}

			 \begin{claim}\label{claim_interchange_same_marginals}
				The maps $\Phi$ and $\Psi$ have the same marginals on $c_{12} c_2 c_{23} c_3 \otimes c_{46} c_6 a_{46} a_6$ and $c_{46} c_6 a_{46} a_6 \otimes a_{78} a_8 a_{89} a_9$.
			 \end{claim}

			 \begin{proof}
				This relies on the fact that the marginals of a conditional product are the maps comprising of it, along with the "postcomposition" property of Claim \ref{claim_regen_for_y_comp}.
			 \end{proof}

			 To conclude this proof, let us show that the maps $\Phi$ and $\Psi$ are equal. We use properties of conditional products: 	since, by Claim \ref{claim_interchange_same_marginals}, the maps $\Phi$ and $\Psi$ have the same marginals, and $\Phi$ is a conditional product over $c_{45} c_5 c_{56} c_6 a_{45} a_5 a_{56} a_6$, it suffices to show that $\Psi$ is also a conditional product over $c_{45} c_5 c_{56} c_6 a_{45} a_5 a_{56} a_6$. Let us now prove that.

 			Let $f_{46}^{\flat}: c_4 a_4 \rightarrow c_{46} c_6 a_{46} a_6$ denote the copy-composite $f_{45}^{\flat} \star f_{56}^{\flat}$. 
			 Let $s': c_1 a_4 \rightarrow c_{12} c_2 c_{45} c_5 a_{45} a_5$ denote the copy-composite $s \star (f_{1245} \otimes f_{25})$. Similarly, let $t': c_4 a_7 \rightarrow c_{45} c_5 a_{45} a_5 a_{78} a_8$, $u': c_2 a_5 \rightarrow c_{23} c_3 c_{56} c_6 a_{56} a_6$, and $v': c_5 a_8 \rightarrow c_{56} c_6 a_{56} a_6 a_{89} a_9$, denote the copy-composites of $t$, $u$, $v$, respectively, with the appropriate \emph{deterministic} maps.

			 By definition of the maps $\alpha$ and $\beta$ as conditional products, the map $\Psi$ is, up to postcomposing with a suitable symmetry, equal to the following:

			\ctikzfig{interchange_0}

			 Here, we wrote $45$ instead of $c_{45} c_5 a_{45} a_5$, and $56$ instead of $c_{56} c_6 a_{56} a_6$; thus we wrote $s'|45$ instead of $s'|c_{45} c_5 a_{45} a_5$, similarly for $t'|45$, $u'|56$ and $v'|56$. 
			 The map $f_4$ denotes $(f_{14} \otimes a_7)\star f_{47}^{\sharp}: c_1 a_7 \rightarrow c_4 a_4$. Similarly, the map $f_5$ denotes $(f_{25} \otimes a_8)\star f_{58}^{\sharp}: c_2 a_8 \rightarrow c_5 a_5$.
			 Now, thanks to Claim \ref{claim_regen_for_y_comp} applied to $s$, $t$ and $\alpha$, we can rewrite the middle of the diagram above, and get that $\Psi$ is, up to permutation, equal to:

			 \ctikzfig{interchange_1}

			 For the same reason, which ultimately follows from determinism of the maps $f_{25}$ and $f_{58}^{\sharp}$ and Lemma \ref{lemma_CP_and_regen}, this morphism is also equal to:

			 \ctikzfig{interchange_2}

			 Then, by sliding, it equals 

			\ctikzfig{interchange_3}

			By associativity of copying, along with counitality of deleting, we can rearrange the center part of the diagram as follows:

			\ctikzfig{interchange_4}

			We now recognize this morphism as a conditional product over $45 \otimes 56$, i.e. over $c_{45} c_5 a_{45} a_5 c_{56} c_6 a_{56} a_6$. This concludes the proof of interchange.	\end{proof}

		\item Consider the boundary of a $yz$ square as below, where the vertical maps are $y$-morphisms, and the horizontal ones are $z$-morphisms:
		
		\begin{center}
			\begin{tikzcd}
				{\begin{pmatrix} a_1 \\ c_1  \\ \end{pmatrix}}  \arrow[d, "{(f_{13}, f_{13}^{\sharp})}", swap, shift right] \arrow[r, "{(f_{12}, g_{12})}", shift left]\arrow[r, shift right] & {\begin{pmatrix} a_2 \\ c_2  \\ \end{pmatrix}}   \arrow[d, shift right]
				\\
				{\begin{pmatrix} a_3 \\ c_3  \\ \end{pmatrix}} \arrow[u, shift right]\arrow[r, "{(f_{34}, g_{34})}", shift left] \arrow[r, shift right] & {\begin{pmatrix} a_4 \\ c_4  \\ \end{pmatrix}}\arrow[u, "{(f_{24}, f_{24}^{\sharp})}", shift right, swap]
			\end{tikzcd}
		\end{center} 
		
		Then, a square with this boundary is given by commutation of the following square in $\CC_{det}$:
		\[\begin{tikzcd}
			{c_1} & {c_2} \\
			{c_3} & {c_4}
			\arrow["{f_{12}}", from=1-1, to=1-2]
			\arrow["{f_{13}}"', from=1-1, to=2-1]
			\arrow["{f_{24}}", from=1-2, to=2-2]
			\arrow["{f_{34}}", from=2-1, to=2-2]
		\end{tikzcd}\]
		
		Composition of $yz$-squares, in the directions $y$ and $z$, is simply concatenation of commuting squares in $\CC_{det}$. Associativity and unitality are straightforward. The $yz$-interchange law holds automatically, because any given boundary has at most one square filling it.

		\item The $xz$-morphisms with boundary as below

		\begin{center}

\[\begin{tikzcd}
	\begin{array}{c} {\begin{pmatrix} a_1 \\ c_1  \\ \end{pmatrix}} \end{array} && \begin{array}{c} {\begin{pmatrix} a_2 \\ c_2  \\ \end{pmatrix}} \end{array} \\
	\begin{array}{c} {\begin{pmatrix} a_3 \\ c_3  \\ \end{pmatrix}} \end{array} && \begin{array}{c} {\begin{pmatrix} a_4 \\ c_4  \\ \end{pmatrix}} \end{array}
	\arrow["{(a_{12}, c_{12}, f_{12}, f^{\flat}_{12})}", shift left=2, from=1-1, to=1-3]
	\arrow[shift right=2, from=1-1, to=1-3]
	\arrow["{(f_{13}, g_{13})}"', shift right=2, from=1-1, to=2-1]
	\arrow[shift left=2, from=1-1, to=2-1]
	\arrow[shift right=2, from=1-3, to=2-3]
	\arrow["{(f_{24}, g_{24})}", shift left=2, from=1-3, to=2-3]
	\arrow[shift left=2, from=2-1, to=2-3]
	\arrow["{(a_{34}, c_{34}, f_{34}, f^{\flat}_{34})}"', shift right=2, from=2-1, to=2-3]
\end{tikzcd}\]
\end{center}

are given by pairs of deterministic maps $f_{1234}: c_{12} \rightarrow c_{34}$, $g_{1234}: a_{12} \rightarrow a_{34}$ such that the following equations hold: 

\begin{enumerate}
	\item $(f_{12}; f_{1234} \otimes f_{24}) = f_{13}; f_{34}$
	\item $(f_{12}^{\flat}; f_{1234} \otimes f_{24} \otimes g_{1234} \otimes g_{24}) = (f_{13} \otimes g_{13}); f_{34}^{\flat}$

\end{enumerate}

In other words, the condition is commutativity of the following squares:

\begin{center}
\[\begin{tikzcd}
	{c_1a_1} && {c_{12}c_2 a_{12}a_2} && {c_1} & {c_{12}c_2} \\
	{c_3a_3} && {c_{34}c_4 a_{34} a_4} && {c_3} & {c_{34}c_4}
	\arrow["{f_{12}^{\flat}}", from=1-1, to=1-3]
	\arrow["{f_{13} \otimes g_{13}}"{description}, from=1-1, to=2-1]
	\arrow["{f_{1234}\otimes f_{24} \otimes g_{1234} \otimes g_{24}}"{description}, from=1-3, to=2-3]
	\arrow["{f_{12}}", from=1-5, to=1-6]
	\arrow["{f_{13}}"{description}, from=1-5, to=2-5]
	\arrow["{f_{1234} \otimes f_{24}}"{description}, from=1-6, to=2-6]
	\arrow["{f_{34}^{\flat}}"', from=2-1, to=2-3]
	\arrow["{f_{34}}"', from=2-5, to=2-6]
\end{tikzcd}\]
\end{center}

Composition of $xz$-squares in the $z$ direction corresponds to composition of maps in $\CC_{det}$ and concatenation of commutative squares. On the other hand, composition in the $x$ direction relies on taking tensor products of (deterministic) maps, and using properties of copy-composition; More precisely, consider $x$-composable $xz$-squares as below:

\begin{center}
\[\begin{tikzcd}
	\begin{array}{c} {\begin{pmatrix} a_1 \\ c_1  \\ \end{pmatrix}} \end{array} && \begin{array}{c} {\begin{pmatrix} a_2 \\ c_2  \\ \end{pmatrix}} \end{array} && \begin{array}{c} {\begin{pmatrix} a_2 \\ c_2  \\ \end{pmatrix}} \end{array} && \begin{array}{c} {\begin{pmatrix} a_3 \\ c_3  \\ \end{pmatrix}} \end{array} \\
	& s &&&& t \\
	\begin{array}{c} {\begin{pmatrix} a_4 \\ c_4  \\ \end{pmatrix}} \end{array} && \begin{array}{c} {\begin{pmatrix} a_5 \\ c_5  \\ \end{pmatrix}} \end{array} && \begin{array}{c} {\begin{pmatrix} a_5 \\ c_5  \\ \end{pmatrix}} \end{array} && \begin{array}{c} {\begin{pmatrix} a_6 \\ c_6  \\ \end{pmatrix}} \end{array}
	\arrow["{(a_{12}, c_{12}, f_{12}, f^{\flat}_{12})}", shift left=2, from=1-1, to=1-3]
	\arrow[shift right=2, from=1-1, to=1-3]
	\arrow["{(f_{14}, g_{14})}"', shift right=2, from=1-1, to=3-1]
	\arrow[shift left=2, from=1-1, to=3-1]
	\arrow[shift right=2, from=1-3, to=3-3]
	\arrow["{(f_{25}, g_{25})}", shift left=2, from=1-3, to=3-3]
	\arrow["{(a_{23}, c_{23}, f_{23}, f^{\flat}_{23})}", shift left=2, from=1-5, to=1-7]
	\arrow[shift right=2, from=1-5, to=1-7]
	\arrow["{(f_{25}, g_{25})}"', shift right=2, from=1-5, to=3-5]
	\arrow[shift left=2, from=1-5, to=3-5]
	\arrow[shift right=2, from=1-7, to=3-7]
	\arrow["{(f_{36}, g_{36})}", shift left=2, from=1-7, to=3-7]
	\arrow[shift left=2, from=3-1, to=3-3]
	\arrow["{(a_{45}, c_{45}, f_{45}, f^{\flat}_{45})}"', shift right=2, from=3-1, to=3-3]
	\arrow[shift left=2, from=3-5, to=3-7]
	\arrow["{(a_{56}, c_{56}, f_{56}, f^{\flat}_{56})}"', shift right=2, from=3-5, to=3-7]
\end{tikzcd}\]
\end{center}

Recall that $x$-morphisms are composed using copy-composition, so that the $x$-morphisms comprising the boundary of the composite $xz$ square $s;_x t$, are given by maps $f_{13}: c_1 \rightarrow c_{12} c_2 c_{23} c_3$, $f_{13}^{\flat}: c_1 a_1 \rightarrow c_{12} c_2 c_{23} c_3 a_{12} a_2 a_{23} a_3$, and $f_{46}: c_4 \rightarrow c_{45} c_5 c_{56} c_6$, $f_{46}^{\flat}: c_4 a_4 \rightarrow c_{45} c_5 c_{56} c_6 a_{45} a_5 a_{56} a_6$.

Then, the deterministic maps $f_{1346}: c_{12} c_2 c_{23} \rightarrow c_{45} c_5 c_{56}$, $g_{1346}: a_{12} a_2 a_{23} \rightarrow a_{45} a_5 a_{56}$ we are looking for can simply defined as $f_{1346} = f_{1245} \otimes f_{25} \otimes f_{2356}$ and $g_{1346} = g_{1245} \otimes g_{25} \otimes g_{2356}$.

Now, note that associativity of such $x$-composition follows from associativity of the tensor product. Also, $xz$-interchange follows from functoriality of the tensor product. Thus, the only thing left to check is that this $x$-composition is actually well-defined. As one might expect, this relies on compatibility of deterministic maps with copying. Let us show that one of the required squares commutes; we leave the other one to the reader:

\begin{center}
\[\begin{tikzcd}
	{c_1a_1} && {c_{12}c_2 a_{12}a_2} && {c_{12}c_2 a_{12}a_2 \, \, c_2 a_2} && {c_{12}c_2 a_{12}a_2 \, \, c_{23} c_3 a_{23} a_3} \\
	&&&&&& {c_{12}c_2 c_{23} c_3  a_{12}a_2 a_{23} a_3} \\
	{c_4a_4} && {c_{45}c_5 a_{45} a_5} && {c_{45}c_5 a_{45}a_5 \, \, c_5 a_5} && {c_{45}c_5 c_{56} c_6  a_{45}a_5 a_{56} a_6}
	\arrow["{f_{12}^{\flat}}", from=1-1, to=1-3]
	\arrow["{f_{14} \otimes g_{14}}"{description}, from=1-1, to=3-1]
	\arrow["{\sigma;copy_{c_{2}a_{2}}; \sigma}", from=1-3, to=1-5]
	\arrow["{f_{1245}\otimes f_{25} \otimes g_{1245} \otimes g_{25}}"{description}, from=1-3, to=3-3]
	\arrow["{c_{12}c_2 a_{12}a_2 \otimes f_{23}^{\flat}}", from=1-5, to=1-7]
	\arrow["{(f_{1245}\otimes f_{25} \otimes g_{1245} \otimes g_{25}) \otimes (f_{25} \otimes g_{25})}"{description}, from=1-5, to=3-5]
	\arrow["\sigma", from=1-7, to=2-7]
	\arrow["{f_{1245}\otimes f_{25}\otimes f_{2356} \otimes f_{36} \otimes g_{1245} \otimes g_{25} \otimes g_{2356} \otimes g_{36}}"{description}, from=2-7, to=3-7]
	\arrow["{f_{45}^{\flat}}"', from=3-1, to=3-3]
	\arrow["{\sigma;copy_{c_{5}a_{5}}; \sigma}"', from=3-3, to=3-5]
	\arrow["{c_{45}c_5 a_{45}a_5 \otimes f_{56}^{\flat}}"', from=3-5, to=3-7]
\end{tikzcd}\]
	
\end{center}

The leftmost and rightmost squares in the diagram above commute because $s$ and $t$ are $xz$-squares, respectively. The center square commutes thanks to naturality of $\sigma$ and determinism of the maps $f_{25}$ and $g_{25}$.
		
		\item There is exactly one cube, i.e. $xyz$-morphism, for each boundary. All properties of cubes are then automatically satisfied.

		\item The symmetric monoidal structure is given by the tensor product in $\CC$, which restricts into the cartesian product in $\CC_{det}$.

	\end{enumerate}

\end{construction}

\subsection{The triple category $ArenaSys^{\mathrm{Moore}}_{\CC} \supseteq Arena^{\mathrm{Moore}}_{\CC}$}\label{appendix_ArenaSys_Moore}

Here, we give details for the proof of Theorem \ref{theo_ArenaSys_C}. Recall that $2$ denotes the $1$-category freely generated by the graph $0 \rightarrow 1$, and that $y(2)$ denotes the triple category built from that, concentrated in the $y$-direction.

\begin{construction}\label{constr_ArenaSys_C} We define the $x$-semi triple category $ArenaSys^{\mathrm{Moore}}_{\CC}$, along with the triple functor $F: ArenaSys^{\mathrm{Moore}}_{\CC} \rightarrow y(2)$ such that $F^{-1}(1) \simeq Arena^{\mathrm{Moore}}_{\CC}$, as follows:
	
	\begin{enumerate}
		\item The class of objects is $Ob(Arena^{\mathrm{Moore}}_{\CC}) \sqcup Mor(\CC_{det})$. In fact, we shall have $Ob(F^{-1}(1)) = Ob(Arena^{\mathrm{Moore}}_{\CC}) = {Ob(\CC)}^{2}$ and $Ob(F^{-1}(0)) = Mor(\CC_{det})$. Abusing notations, an element $r: \widetilde{S} \rightarrow S$ of $Mor(\CC_{det})$ may be denoted ${\begin{pmatrix} \widetilde{S} \\ S  \\ \end{pmatrix}}$.
		\item If $r_1: \widetilde{S}_1 \rightarrow S_1$, $r_2: \widetilde{S}_2 \rightarrow S_2$ are in $Mor(\CC_{det})$, an $x$-morphism ${\begin{pmatrix} \widetilde{S}_1 \\ S_1  \\ \end{pmatrix}} \rightrightarrows {\begin{pmatrix} \widetilde{S}_2 \\ S_2  \\ \end{pmatrix}}$ is a pair of morphisms $f^{\flat}: \widetilde{S}_1 \rightarrow \widetilde{S}_2$, $f: S_1 \rightarrow S_2$ in $\CC$, such that $f^{\flat} ; r_2 = r_1 ; f$. Composition is given by composition in $\CC$. Checking that it is well-defined amounts to concatenating commutative squares in $\CC$.
		
		Note that we do not use copy-composition here, contrary to the case of $Arena^{\mathrm{Moore}}_{\CC}$. 
		
		%

		\item A $z$-morphism ${\begin{pmatrix} \widetilde{S}_1 \\ S_1  \\ \end{pmatrix}} \rightrightarrows {\begin{pmatrix} \widetilde{S}_2 \\ S_2  \\ \end{pmatrix}}$ is a pair of \emph{deterministic} morphisms $g: \widetilde{S}_1 \rightarrow \widetilde{S}_2$, $f: S_1 \rightarrow S_2$ in $\CC$, making the following square commute: 
		
		\begin{center}
			\begin{tikzcd}
				\widetilde{S}_1 \arrow[r, "g^{}"] \arrow[d, "r_1"]& \widetilde{S}_2  \arrow[d, "r_2"]
				\\
				S_1 \arrow[r, "f"]& S_2
			\end{tikzcd}
		\end{center}
		
		\item If $r: \widetilde{S} \rightarrow S \in Mor(\CC_{det})$ and $	{\begin{pmatrix} a_1 \\ c_1  \\ \end{pmatrix}} \in Ob(Arena^{\mathrm{Moore}}_{\CC})$, a $y$-morphism ${\begin{pmatrix} \widetilde{S} \\ S  \\ \end{pmatrix}} \rightarrow {\begin{pmatrix} a_1 \\ c_1  \\ \end{pmatrix}}$ is a \emph{nondeterministic lens} $({\begin{pmatrix} \widetilde{S} \\ S  \\ \end{pmatrix}} \leftrightarrows {\begin{pmatrix} a_1 \\ c_1  \\ \end{pmatrix}})$ \emph{with deterministic output map}, given by an object $ \in \CC$, a \emph{deterministic} map $f: S \rightarrow c_1$ in $\CC_{det}$, and a map $f^{\sharp} :  S \otimes a_1 \rightarrow \widetilde{S}$ in $\CC$, such that the following triangle commutes:
		
		\begin{center}
			\begin{tikzcd}
				 S \otimes a_1 \arrow[r, "f^{\sharp}"] \arrow[rd, "\pi"]& \widetilde{S}  \arrow[d, "r"]
				\\
				& S
			\end{tikzcd}
		\end{center}

		The only $y$ morphisms between objects in $F^{-1}(0)$ are identities. Composition is given by composition of lenses, and composition in $\CC_{}$. Checking that it is well-defined amounts to basic computations in the Markov category $\CC_{}$.

		\item The $xz$-double category of the fiber $F^{-1}(0)$ is thin. Consider a boundary as below, where the vertical maps are $z$ morphisms, and the horizontal maps are $x$ morphisms: 
		\begin{center}
			\begin{tikzcd}
				{\begin{pmatrix} \widetilde{S}_1 \\ S_1  \\ \end{pmatrix}}  \arrow[d, shift left]\arrow[d, "{(f_{12}, g_{12})}", swap, shift right] \arrow[r, "{(f_{13}, f_{13}^{\flat})}", shift left]\arrow[r, shift right] & {\begin{pmatrix} \widetilde{S}_3 \\ S_3  \\ \end{pmatrix}}   \arrow[d, shift right]\arrow[d, "{(f_{34}, g_{34})}", shift left]
				\\
				{\begin{pmatrix} \widetilde{S}_2 \\ S_2  \\ \end{pmatrix}} \arrow[r, "{(f_{24}, f_{24}^{\flat})}", shift left] \arrow[r, shift right] & {\begin{pmatrix} \widetilde{S}_4 \\ S_4  \\ \end{pmatrix}}
			\end{tikzcd}
		\end{center} 
		
		Then, there is a square with this boundary if and only if the following squares commute in $\CC$:
		
		\[\begin{tikzcd}
			{\widetilde{S}_1} & {\widetilde{S}_3} && {S_1} & {S_3} \\
			{\widetilde{S}_2} & {\widetilde{S}_4} && {S_2} & {S_4}
			\arrow["{f^{\flat}_{13}}", from=1-1, to=1-2]
			\arrow["{g_{12}}"{description}, from=1-1, to=2-1]
			\arrow["{g_{34}}"{description}, from=1-2, to=2-2]
			\arrow["{f_{13}}", from=1-4, to=1-5]
			\arrow["{f_{12}}"{description}, from=1-4, to=2-4]
			\arrow["{f_{34}}"{description}, from=1-5, to=2-5]
			\arrow["{f^{\flat}_{24}}", from=2-1, to=2-2]
			\arrow["{f_{24}}", from=2-4, to=2-5]
		\end{tikzcd}\]
		
		Note that the vertical arrows are deterministic here, only the horizontal ones are nondeterministic. One can check that compositions are well-defined by concatenating, i.e. composing, commutative squares. Associativity and interchange are then automatic. Note that these new $xz$-squares only involve systems/morphisms in the fiber $F^{-1}(0)$, so there are no compositions with previously defined $xz$ squares in $Arena^{\mathrm{Moore}}_{\CC}$.
		\item\label{yz_squares_ArenaSys_C} The only $yz$ squares in $F^{-1}(0)$ are identities of $z$-morphisms. Now, for the nontrivial $yz$ squares, from $z$-morphisms in $F^{-1}(0)$ to $z$-morphisms in $F^{-1}(1)$, consider the boundary of a $yz$ square as below, where the vertical maps are $y$-morphisms, and the horizontal ones are $z$-morphisms:
		\begin{center}
		\begin{tikzcd}
			{\begin{pmatrix} \widetilde{S}_1 \\ S_1  \\ \end{pmatrix}}  \arrow[d, "{(f_{13}, f_{13}^{\sharp})}", swap, shift right] \arrow[r, "{(f_{12}, g_{12})}", shift left]\arrow[r, shift right] & {\begin{pmatrix} \widetilde{S}_2 \\ S_2  \\ \end{pmatrix}}   \arrow[d, shift right]
			\\
			{\begin{pmatrix} a_3 \\ c_3  \\ \end{pmatrix}} \arrow[u, shift right]\arrow[r, "{(f_{34}, g_{34})}", shift left] \arrow[r, shift right] & {\begin{pmatrix} a_4 \\ c_4  \\ \end{pmatrix}}\arrow[u, "{(f_{24}, f_{24}^{\sharp})}", shift right, swap]
		\end{tikzcd}
		\end{center}
		Then, a square with this boundary is given by commutation of the following square in $\CC_{det}$:
		
		\begin{center}
		\begin{tikzcd}
			{S_1} & {S_2} \\
			{c_3} & {c_4}
			\arrow["{f_{12}}", from=1-1, to=1-2]
			\arrow["{f_{13}}"', from=1-1, to=2-1]
			\arrow["{f_{24}}", from=1-2, to=2-2]
			\arrow["{f_{34}}", from=2-1, to=2-2]
		\end{tikzcd}
		\end{center}
		Composition of $yz$-squares in the direction $z$ is concatenating commuting squares. Associativity is automatic.
		
		Let us now define $y$-composition for the new $yz$-squares. Let $s_{}$, $t_{}$ be nontrivial $y$-composable squares as below:

		\begin{center}
		\begin{tikzcd}
			\begin{array}{c} {\begin{pmatrix} \widetilde{S}_1 \\ S_1  \\ \end{pmatrix}} \end{array} && \begin{array}{c} {\begin{pmatrix} \widetilde{S}_2 \\ S_2  \\ \end{pmatrix}} \end{array} \\
			& {s_{}} \\
			\begin{array}{c} {\begin{pmatrix} a_3 \\ c_3  \\ \end{pmatrix}} \end{array} && \begin{array}{c} {\begin{pmatrix} a_4 \\ c_4  \\ \end{pmatrix}} \end{array} \\
			& {t_{}} \\
			\begin{array}{c} {\begin{pmatrix} a_5 \\ c_5  \\ \end{pmatrix}} \end{array} && \begin{array}{c} {\begin{pmatrix} a_6 \\ c_6  \\ \end{pmatrix}} \end{array}
			\arrow["{(f_{12}, g_{12})}", shift left=2, from=1-1, to=1-3]
			\arrow[shift right=2, from=1-1, to=1-3]
			\arrow["{(f_{13}, f_{13}^{\sharp})}"', shift right=2, from=1-1, to=3-1]
			\arrow[shift right=2, from=1-3, to=3-3]
			\arrow[shift right=2, from=3-1, to=1-1]
			\arrow[shift left=2, from=3-1, to=3-3]
			\arrow["{(f_{34}, g_{34})}"', shift right=2, from=3-1, to=3-3]
			\arrow["{(f_{35}, f_{35}^{\sharp})}"', shift right=2, from=3-1, to=5-1]
			\arrow["{(f_{24}, f_{24}^{\sharp})}"', shift right=2, from=3-3, to=1-3]
			\arrow[shift right=2, from=3-3, to=5-3]
			\arrow[shift right=2, from=5-1, to=3-1]
			\arrow[shift left=2, from=5-1, to=5-3]
			\arrow["{(f_{56}, g_{56})}"', shift right=2, from=5-1, to=5-3]
			\arrow["{(f_{46}, f_{46}^{\sharp})}"', shift right=2, from=5-3, to=3-3]
		\end{tikzcd}
		\end{center}
		Note that the bottom $yz$-square belongs to $Arena^{\mathrm{Moore}}_{\CC}$; it is also given by a "commuting square condition". Concatenating the two commuting squares together, one gets that the required square (for the composite $\frac{s}{t}$) commutes. Associativity and interchange are automatic.

		\item The only $xy$ squares in $F^{-1}(0)$ are identities of $x$-morphisms. For the nontrivial ones, consider the boundary of an $xy$ square as in the diagram below:
		\begin{center}
			\begin{tikzcd}
				{\begin{pmatrix} \widetilde{S}_1 \\ S_1  \\ \end{pmatrix}}  \arrow[d, "{(f_{13}, f_{13}^{\sharp})}", swap, shift right] \arrow[rrr, "{(f_{12}, f^{\flat}_{12})}", shift left]\arrow[rrr, shift right] &&& {\begin{pmatrix} \widetilde{S}_2 \\ S_2  \\ \end{pmatrix}}   \arrow[d, shift right]
				\\
				{\begin{pmatrix} a_3 \\ c_3  \\ \end{pmatrix}} \arrow[u, shift right]\arrow[rrr, "{(a_{34}, c_{34}, f_{34}, f^{\flat}_{34})}", shift left] \arrow[rrr, shift right] &&& {\begin{pmatrix} a_4 \\ c_4  \\ \end{pmatrix}}\arrow[u, "{(f_{24}, f_{24}^{\sharp})}", shift right, swap]
			\end{tikzcd}
		\end{center}
		
		Then, an $xy$ square with this boundary is given by a morphism $s_{}:  S_1 a_3 \rightarrow  S_2 c_{34} a_{34} a_4$ in $\CC$, making the following squares commute:
\[\begin{tikzcd}
	{{S_1  a_3}} && {{S_2 c_{34} a_{34} a_4}} & {S_1} & {S_2} \\
	{c_3 a_3} && {c_{34} c_4 a_{34} a_4} & {c_3} & {c_4} \\
	& {{S_1 a_3}} && {{S_2 a_4}} \\
	& {\widetilde{S}_1} && {\widetilde{S}_2}
	\arrow["s", from=1-1, to=1-3]
	\arrow["{f_{13}\otimes a_3}"{description}, from=1-1, to=2-1]
	\arrow["{(\sigma; c_{34} \otimes f_{24}) \otimes a_{34} a_4}"{description}, from=1-3, to=2-3]
	\arrow["{f_{12}}", from=1-4, to=1-5]
	\arrow["{f_{13}}"{description}, from=1-4, to=2-4]
	\arrow["{f_{24}}"{description}, from=1-5, to=2-5]
	\arrow["{f_{34}^{\flat}}", from=2-1, to=2-3]
	\arrow["{f_{34}; del_{c_{34}}}", from=2-4, to=2-5]
	\arrow["{s; del_{c_{34}a_{34}}}", from=3-2, to=3-4]
	\arrow["{f_{13}^{\sharp}}"{description}, from=3-2, to=4-2]
	\arrow["{f_{24}^{\sharp}}"{description}, from=3-4, to=4-4]
	\arrow["{f_{12}^{\flat}}", from=4-2, to=4-4]
\end{tikzcd}\]

		\item Let us now define $x$-composition for these new $xy$-squares. Recall that $x$-composition for the bottom $x$-morphisms of such squares uses copy-composition, yet the top $x$-morphisms (those in the fiber $F^{-1}(0)$) compose in the usual way. Thus, our $x$-composition operation for $xy$-squares shall use a mix of usual and copy compositions. Consider a setting as below, with $x$-composable squares $s$ and $t$:
		
\[\begin{tikzcd}
	\begin{array}{c} {\begin{pmatrix} \widetilde{S}_1 \\ S_1  \\ \end{pmatrix}} \end{array} && \begin{array}{c} {\begin{pmatrix} \widetilde{S}_2 \\ S_2  \\ \end{pmatrix}} \end{array} && \begin{array}{c} {\begin{pmatrix} \widetilde{S}_3 \\ S_3  \\ \end{pmatrix}} \end{array} \\
	& s && t \\
	\begin{array}{c} {\begin{pmatrix} a_4 \\ c_4  \\ \end{pmatrix}} \end{array} && \begin{array}{c} {\begin{pmatrix} a_5 \\ c_5 \\ \end{pmatrix}} \end{array} && \begin{array}{c} {\begin{pmatrix} a_6 \\ c_6  \\ \end{pmatrix}} \end{array}
	\arrow["{(f_{12}, f^{\flat}_{12})}", shift left=2, from=1-1, to=1-3]
	\arrow[shift right=2, from=1-1, to=1-3]
	\arrow["{(f_{14}, f_{14}^{\sharp})}"', shift right=2, from=1-1, to=3-1]
	\arrow["{(f_{23}, f^{\flat}_{23})}", shift left=2, from=1-3, to=1-5]
	\arrow[shift right=2, from=1-3, to=1-5]
	\arrow["{(f_{25}, f_{25}^{\sharp})}"{description}, shift right=2, from=1-3, to=3-3]
	\arrow[shift right=2, from=1-5, to=3-5]
	\arrow[shift right=2, from=3-1, to=1-1]
	\arrow["{(a_{45}, c_{45}, f_{45}, f^{\flat}_{45})}", shift left=2, from=3-1, to=3-3]
	\arrow[shift right=2, from=3-1, to=3-3]
	\arrow[shift right=2, from=3-3, to=1-3]
	\arrow["{(a_{56}, c_{56}, f_{56}, f^{\flat}_{56})}", shift left=2, from=3-3, to=3-5]
	\arrow[shift right=2, from=3-3, to=3-5]
	\arrow["{( f_{36}, f_{36}^{\sharp})}"', shift right=2, from=3-5, to=1-5]
\end{tikzcd}\]
		
The map $s |t = s;_x t: S_1 a_4 \rightarrow S_3 \,\, c_{45}c_5c_{56} a_{45} a_5 a_{56} \,\, a_6$ is defined as the following composite:

$S_1 a_4 \xrightarrow{s} S_2 \, c_{45} a_{45}  \, a_5 \xrightarrow{copy_{S_2}\otimes c_{45} a_{45}   a_5} S_2 S_2 \, c_{45} a_{45}  \, a_5 \xrightarrow{\sigma; S_2 \otimes c_{45} a_{45} \otimes  f_{25} \otimes a_5} S_2 \, c_{45}  a_{45}  \, c_5 a_5 \xrightarrow{S_2  c_{45}  a_{45}  c_5 \otimes copy_{a_5}} S_2 \, c_{45}  a_{45}  \, c_5 a_5 a_5 \xrightarrow{\sigma} c_{45}c_5  a_{45} a_5 \,  S_2 a_5 \xrightarrow{c_{45}c_5  a_{45} a_5 \otimes t} c_{45}c_5  a_{45} a_5 \,  S_3 c_{56} a_{56} a_6 \xrightarrow{\sigma}S_3 \, c_{45}c_5 c_{56} a_{45} a_5 a_{56} \, a_6.$

Omitting the various "tensoring with identities" in the notation, this composite looks like this: 

$S_1 a_4 \xrightarrow{s} S_2 \, c_{45} a_{45}  \, a_5 \xrightarrow{copy_{S_2}} S_2 S_2 \, c_{45} a_{45}  \, a_5 \xrightarrow{\sigma; f_{25}} S_2 \, c_{45}  a_{45}  \, c_5 a_5 \xrightarrow{copy_{a_5}} S_2 \, c_{45}  a_{45}  \, c_5 a_5 a_5 \xrightarrow{\sigma} c_{45}c_5  a_{45} a_5 \,  S_2 a_5 \xrightarrow{t} c_{45}c_5  a_{45} a_5 \,  S_3 c_{56} a_{56} a_6 \xrightarrow{\sigma}S_3 \, c_{45}c_5 c_{56} a_{45} a_5 a_{56} \, a_6.$

Associativity of this composition holds for the same reasons as associativity of copy-composition in $\CC$: associativity of copying and functoriality of the tensor product. Recall that there are no $x$-identities, so we don't have identity $xy$-squares either.

It remains to show that this composition is well-defined, i.e. that the three squares in the definition of $xy$-morphisms commute. For the top-right and bottom ones, it suffices to concatenate the corresponding squares for $s$ and $t$, and recall the basic properties of $del$ and $copy$: copy-composition followed by deletion is the same as usual composition! For the top-left square, the proof is very similar to the one we gave in the case of $Arena^{\mathrm{Moore}}_{\CC}$, and relies on determinism of the map $f_{25}$. We refer the reader to the proof of Claim \ref{claim_x_comp_well_def} for inspiration.

Note that we do not need the map $f_{25}^{\sharp}$ to be deterministic here: the use of standard composition, rather than copy-composition, for the "top parts" of the squares, allows for nondeterministic lenses.

		\item\label{enum_xy_squares_ArenaSysC}As in $Arena^{\mathrm{Moore}}_{\CC}$, composition of $xy$ squares in the $y$ direction is more involved, and relies on conditional products. Let $s$, $t$ be $y$-composable squares as below: 

\[\begin{tikzcd}
	\begin{array}{c} {\begin{pmatrix} \widetilde{S}_1 \\ S_1  \\ \end{pmatrix}} \end{array} && \begin{array}{c} {\begin{pmatrix} \widetilde{S}_2 \\ S_2  \\ \end{pmatrix}} \end{array} \\
	& s \\
	\begin{array}{c} {\begin{pmatrix} a_3 \\ c_3  \\ \end{pmatrix}} \end{array} && \begin{array}{c} {\begin{pmatrix} a_4 \\ c_4  \\ \end{pmatrix}} \end{array} \\
	\begin{array}{c} {\begin{pmatrix} a_3 \\ c_3  \\ \end{pmatrix}} \end{array} && \begin{array}{c} {\begin{pmatrix} a_4 \\ c_4  \\ \end{pmatrix}} \end{array} \\
	& t \\
	\begin{array}{c} {\begin{pmatrix} a_5 \\ c_5  \\ \end{pmatrix}} \end{array} && \begin{array}{c} {\begin{pmatrix} a_6 \\ c_6  \\ \end{pmatrix}} \end{array}
	\arrow["{(f_{12}, f^{\flat}_{12})}", shift left=2, from=1-1, to=1-3]
	\arrow[shift right=2, from=1-1, to=1-3]
	\arrow["{(f_{13}, f_{13}^{\sharp})}"', shift right=2, from=1-1, to=3-1]
	\arrow[shift right=2, from=1-3, to=3-3]
	\arrow[shift right=2, from=3-1, to=1-1]
	\arrow["{(a_{34}, c_{34}, f_{34}, f^{\flat}_{34})}", shift left=2, from=3-1, to=3-3]
	\arrow[shift right=2, from=3-1, to=3-3]
	\arrow["{(f_{24}, f_{24}^{\sharp})}"', shift right=2, from=3-3, to=1-3]
	\arrow["{(a_{34}, c_{34}, f_{34}, f^{\flat}_{34})}", shift left=2, from=4-1, to=4-3]
	\arrow[shift right=2, from=4-1, to=4-3]
	\arrow["{(f_{35}, f_{35}^{\sharp})}"', shift right=2, from=4-1, to=6-1]
	\arrow[shift right=2, from=4-3, to=6-3]
	\arrow[shift right=2, from=6-1, to=4-1]
	\arrow[shift left=2, from=6-1, to=6-3]
	\arrow["{(a_{56}, c_{56}, f_{56}, f_{56}^{\flat})}"', shift right=2, from=6-1, to=6-3]
	\arrow["{(f_{46}, f_{46}^{\sharp})}"', shift right=2, from=6-3, to=4-3]
\end{tikzcd}\]

		\begin{claim}
			The following diagram commutes:
\[\begin{tikzcd}
	{S_1 a_5} & { c_3 a_5} \\
	{S_1  c_3  a_5} & {c_{34} c_4 a_{56} a_6} \\
	{S_1  a_3} & {c_{34} c_4  \, c_{34}c_4 \,  a_{56}a_6 \, a_{56} a_6} \\
	{S_2 c_{34} a_{34} a_4} & {c_{34} c_4 a_{34}  a_4 \otimes c_{56}a_{56} a_6} \\
	{ S_2 \otimes c_{34}  c_4 a_{34} a_4} & {c_{34} c_4 a_{34}  a_4}
	\arrow["{f_{13} \otimes a_5}", from=1-1, to=1-2]
	\arrow["{copy_{S_1} ; S_1 \otimes f_{13} \otimes a_5}"', from=1-1, to=2-1]
	\arrow["t", from=1-2, to=2-2]
	\arrow["{S_1 \otimes f_{35}^{\sharp}}"', from=2-1, to=3-1]
	\arrow["{copy_{c_{34}c_4} \otimes copy_{a_{56}a_6}}", from=2-2, to=3-2]
	\arrow["{s_{}}"', from=3-1, to=4-1]
	\arrow["{copy_{c_{34}}; f_{3456};\sigma; f_{46}^{\sharp}\otimes f_{3456}^{\sharp}}", from=3-2, to=4-2]
	\arrow["{copy_{S_2}; \sigma; S_2  c_{34} \otimes f_{24} \otimes a_{34}a_4}"', from=4-1, to=5-1]
	\arrow["\pi", from=4-2, to=5-2]
	\arrow["\pi", from=5-1, to=5-2]
\end{tikzcd}\]
			
		\end{claim}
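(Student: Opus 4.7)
The strategy is to compute both composites as maps $S_1 \otimes a_5 \rightarrow c_{34} \otimes c_4 \otimes a_{34} \otimes a_4$ and show that, after routine simplifications, each of them collapses to the canonical expression
\[ (f_{13} \otimes a_5); (copy_{c_3} \otimes a_5); (c_3 \otimes f_{35}^{\sharp}); f_{34}^{\flat}. \]
The inputs are the three defining axioms for $xy$-squares applied respectively to $s$ (as a square in $ArenaSys^{\mathrm{Moore}}_{\CC}$) and to $t$ (as a square in $Arena^{\mathrm{Moore}}_{\CC}$), together with determinism of the output maps $f_{13}$, $f_{24}$, $f_{35}$, $f_{46}$ of the $y$-morphisms.

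For the left column followed by the bottom $\pi$, the key observation is that the final block $copy_{S_2}; \sigma; (S_2 c_{34} \otimes f_{24} \otimes a_{34} a_4); \pi$ reduces, by counitality of $del$ against $copy$, to postcomposing $s$ with $f_{24}$ on its $S_2$-slot and marginalizing $S_2$ out. The second axiom for $s$ identifies this, up to a symmetry swapping $c_4$ and $c_{34}$, with $(f_{13} \otimes a_3); f_{34}^{\flat}$. Hence the left-column composite equals $(copy_{S_1} \otimes a_5); (S_1 \otimes f_{13} \otimes a_5); (S_1 \otimes f_{35}^{\sharp}); (f_{13} \otimes a_3); f_{34}^{\flat}$. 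Since $f_{13}$ is deterministic, the naturality identity $copy_{S_1}; (f_{13} \otimes f_{13}) = f_{13}; copy_{c_3}$ allows me to pull both occurrences of $f_{13}$ in front of the copy, producing the canonical expression above.

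For the right column, precomposed with $f_{13} \otimes a_5$, the post-$t$ block produces $c_{34} c_4 a_{34} a_4 \otimes c_{56} a_{56} a_6$ using the two copies of $c_{34} c_4$ and $a_{56} a_6$, then discards the $c_{56} a_{56} a_6$ factor via $\pi$. Counitality lets me suppress the application of $f_{3456}$ (producing $c_{56}$) and of the ``spare'' $a_{56} \otimes a_6$, so that the combined block simplifies, up to symmetries, to $t; copy_{c_{34} c_4}; (f_{3456}^{\sharp} \otimes f_{46}^{\sharp})$. The third axiom for $t$ as an $xy$-square in $Arena^{\mathrm{Moore}}_{\CC}$ then rewrites this as $copy_{c_3}; (c_3 \otimes f_{35}^{\sharp}); f_{34}^{\flat}$, and precomposing with $f_{13} \otimes a_5$ yields again the canonical expression.

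The main obstacle is bookkeeping: tracking the implicit symmetries $\sigma$ (in particular those needed to reconcile the types $c_4 c_{34} a_{34} a_4$ and $c_{34} c_4 a_{34} a_4$ when invoking axiom~$2$ for $s$), keeping the counitality-based simplifications honest when collapsing the $c_{56} \otimes a_{56} \otimes a_6$ and $S_2$ marginals, and applying naturality of $copy$ through the deterministic $f_{13}$. Once these routine manipulations are organized, the equality of both composites follows directly from the two axioms and determinism, as sketched.
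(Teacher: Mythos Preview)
Your proposal is correct and follows exactly the approach sketched in the paper's proof notes: reduce each column, via counitality of $del$ against $copy$, to a form where one of the defining axioms for $s$ (the top-left commuting square in the $ArenaSys^{\mathrm{Moore}}_{\CC}$ definition) or for $t$ (the bottom commuting square in the $Arena^{\mathrm{Moore}}_{\CC}$ definition) applies, and observe that both sides meet at $(f_{13}\otimes a_5);(copy_{c_3}\otimes a_5);(c_3\otimes f_{35}^{\sharp});f_{34}^{\flat}$ thanks to determinism of $f_{13}$. The aside about ``a symmetry swapping $c_4$ and $c_{34}$'' is unnecessary---the axiom already lands in $c_{34}\otimes c_4\otimes a_{34}\otimes a_4$---but this is harmless overcaution rather than an error.
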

		\begin{proof}[Proof notes.]
			As in the case of $Arena^{\mathrm{Moore}}_{\CC}$, this relies on the fact that $s$ and $t$ are squares, with the bottom horizontal morphism in $s$ being the same as the top horizontal morphism in $t$.
		\end{proof}
		
		Then, let $\alpha: {  S_1 \otimes a_5} \rightarrow {  S_2 \otimes c_{34} c_4  a_{34}a_4 \otimes c_{56} a_{56} a_6}$ denote the conditional product over $c_{34} c_4 a_{34} a_4$ of the composites above, and let $\frac{s}{t}$ be the composite $(\alpha ; del_{c_{34} c_4 a_{34} a_4}) : {  S_1 \otimes a_5} \rightarrow {  S_2 \otimes c_{56} a_{56} a_6}$.

		\begin{claim}\label{claim_regen_for_y_comp_sys}
			The map $\alpha: S_1 a_5 \rightarrow  S_2 \, c_{34}c_4 a_{34}a_4 \,  c_{56} a_{56} a_6$ is equal to the composite $S_1 a_5 \xrightarrow{\alpha}  S_2 \, c_{34}c_4 a_{34}a_4 \,  c_{56} a_{56} a_6 \xrightarrow{del_{c_4 a_4}} S_2 \, c_{34} a_{34} \,  c_{56} a_{56} a_6 \xrightarrow{copy_{S_2}; copy_{S_2 a_6}; \sigma}$

			\noindent $S_2 \, c_{34} a_{34} \, S_2 \,  c_{56} a_{56} a_6 \, S_2 a_6 \xrightarrow{S_2 c_{34} a_{34} \otimes f_{24} \otimes c_{56} a_{56} a_6 \otimes (f_{24} \otimes a_6; f_{46}^{\sharp})} S_2 \, c_{34} a_{34} \, c_4 \,  c_{56} a_{56} a_6 \, a_4 \xrightarrow{\sigma} S_2 \, c_{34} c_4 a_{34} a_4\,  c_{56} a_{56} a_6$.
			
			Abusing notations by omitting tensors with identities, this composite is:

			$S_1 a_5 \xrightarrow{\alpha}  S_2 \, c_{34}c_4 a_{34}a_4 \,  c_{56} a_{56} a_6 \xrightarrow{del_{c_4 a_4}} S_2 \, c_{34} a_{34} \,  c_{56} a_{56} a_6 \xrightarrow{copy_{S_2}; copy_{S_2 a_6}; \sigma}$

			\noindent $S_2 \, c_{34} a_{34} \, S_2 \,  c_{56} a_{56} a_6 \, S_2 a_6 \xrightarrow{ f_{24} \otimes (f_{24} ; f_{46}^{\sharp})} S_2 \, c_{34} a_{34} \, c_4 \,  c_{56} a_{56} a_6 \, a_4 \xrightarrow{\sigma} S_2 \, c_{34} c_4 a_{34} a_4\,  c_{56} a_{56} a_6$.
		\end{claim}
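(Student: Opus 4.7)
The plan is to mirror the proof of Claim \ref{claim_regen_for_y_comp} from the construction of $Arena^{\mathrm{Moore}}_{\CC}$, applying Lemma \ref{lemma_CP_and_regen} twice to the conditional product $\alpha$, with $X = S_1 a_5$, $A = S_2$, $B = c_{34} c_4 a_{34} a_4$, and $C = c_{56} a_{56} a_6$. The regeneration map $F: S_2 \otimes B \otimes C \rightarrow S_2 \otimes B \otimes C$ described in the right-hand side of the claim decomposes as a composite of two simpler regenerations: first, a map $f: A \otimes B \rightarrow A \otimes B$ that deletes $c_4$ and regenerates it as $f_{24}(S_2)$; then, a map $g: B \otimes C \rightarrow B \otimes C$ that deletes $a_4$ and regenerates it as $f_{46}^{\sharp}(c_4, a_6)$. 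The overall strategy is to show $\alpha;(f \otimes C) = \alpha$, then $\alpha;(A \otimes g) = \alpha$, and finally identify $(f \otimes C);(A \otimes g)$ with $F$.

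For the first regeneration, the left factor $\phi: S_1 a_5 \rightarrow S_2 \otimes c_{34} c_4 a_{34} a_4$ of $\alpha$ (read off from the commuting diagram preceding the definition of $\alpha$) ends with the deterministic step $copy_{S_2}; \sigma; (S_2 c_{34} \otimes f_{24} \otimes a_{34} a_4)$, so $c_4$ is produced by applying the deterministic map $f_{24}$ to a copy of $S_2$. Lemma \ref{lemma_det_almost_sure_equality} then shows that $f = id_{A \otimes B}$ holds $\phi$-almost surely, and Lemma \ref{lemma_CP_and_regen} yields $\alpha; f \otimes C = \alpha$. Symmetrically, the right factor $\psi: S_1 a_5 \rightarrow c_{34} c_4 a_{34} a_4 \otimes c_{56} a_{56} a_6$ factors through $c_{34} c_4 a_{56} a_6$, with $a_4$ subsequently generated by a deterministic composite using $f_{46}^{\sharp}$. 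The symmetric version of Lemma \ref{lemma_CP_and_regen}, together with Lemma \ref{lemma_det_almost_sure_equality}, gives $\alpha; A \otimes g = \alpha$.

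Composing these two invariances yields $\alpha = \alpha; (f \otimes C); (A \otimes g)$. The final step is to identify $(f \otimes C);(A \otimes g)$ with the composite $F$ stated in the claim: after $f$ replaces $c_4$ by $f_{24}(S_2)$, the subsequent application of $g$ (which takes the current $c_4$ and $a_6$ as inputs to $f_{46}^{\sharp}$) yields exactly the same Markov morphism as $F$, which directly uses $f_{24}(S_2)$ and $a_6$ as inputs to $f_{46}^{\sharp}$. This identification is a routine diagrammatic check using associativity and coassociativity of $copy$, counitality of $del$, naturality of $\sigma$, and the fact that $f_{24}$ is deterministic (so the order in which we copy $S_2$, apply $f_{24}$, and rearrange does not affect the result). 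The main technical obstacle is the bookkeeping of the various copies and symmetries in this last identification, which is slightly more involved than in Claim \ref{claim_regen_for_y_comp} because here the regeneration of $a_4$ requires first regenerating $c_4$ from $S_2$ via $f_{24}$, so the two regeneration steps must be composed in the correct order rather than treated fully symmetrically.
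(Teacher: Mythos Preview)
Your proposal is correct and follows essentially the same approach as the paper: the paper's proof simply states that one applies Lemma \ref{lemma_CP_and_regen} twice with $X = S_1 a_5$, $A = S_2$, $B = c_{34} c_4 a_{34} a_4$, $C = c_{56} a_{56} a_6$, leaving the details to the reader. You have correctly supplied those details, including the decomposition into the two regeneration maps $f$ and $g$ and the final identification of $(f \otimes C);(A \otimes g)$ with the composite $F$.
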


		Note that this Claim is weaker than its counterpart Claim \ref{claim_regen_for_y_comp} that held in $Arena^{\mathrm{Moore}}_{\CC}$: we do not expect to recover $\alpha$ from its marginal $\frac{s}{t}$, because there is no way of getting an output in $c_{34}$ this time. However, this Claim will suffice for the proof of interchange; actually, one can check that the corresponding weaker statement would have been enough for the proof of $xy$-interchange in $Arena^{\mathrm{Moore}}_{\CC}$ as well.

		\begin{proof}
			The proof is very similar to that of Claim \ref{claim_regen_for_y_comp}: it relies on using Lemma \ref{lemma_CP_and_regen} twice, with $X= S_1 a_5$, $A= S_2$, $B = c_{34} c_4 a_{34} a_4$ and $C = c_{56} a_{56} a_6$. We leave the details to the reader.
		\end{proof}

		\begin{claim}\label{claim_y_comp_well_def_arenasys}
			This composition is well-defined, i.e. the required squares commute.
		\end{claim}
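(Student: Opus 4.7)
The plan is to verify the three commuting diagrams defining an $xy$-square in $ArenaSys^{\mathrm{Moore}}_{\CC}$, applied to the composite $\frac{s}{t}$, mirroring the strategy used in Claim \ref{claim_y_comp_well_def_arena_moore} but adapted to the fact that $\frac{s}{t}$ lands in $S_2 \otimes c_{56} a_{56} a_6$ rather than keeping track of $c_{34} c_4$-data. Throughout, I would use (i) the two defining marginal properties of the conditional product $\alpha$, (ii) the fact that $s$ and $t$ are individually $xy$-squares, (iii) functoriality of the tensor product and basic comonoid identities, and (iv) the regeneration identity of Claim \ref{claim_regen_for_y_comp_sys} (which itself rests on Lemma \ref{lemma_CP_and_regen} applied to the deterministic maps $f_{24}$ and $f_{46}^{\sharp}$).

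First, the ``top-right'' square, which only concerns the deterministic output components $f_{12}, f_{13}, f_{24}, f_{35}, f_{46}, f_{34}, f_{56}$, is a routine concatenation in $\CC_{det}$: the square for $s$ gives $f_{12}; f_{24} = f_{13}; f_{34}; \pi_{c_4}$; the square for $t$ (as an $xy$-square in $Arena^{\mathrm{Moore}}_{\CC}$) gives $f_{34}; (f_{3456}\otimes f_{46}) = f_{35}; f_{56}$, whence postcomposing with $\pi_{c_6}$ and pasting the two diagrams yields the required equation for $\frac{s}{t}$.

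Second, for the top-left square (involving $\frac{s}{t}$ and $f_{56}^{\flat}$), I would paste together the defining diagram: the marginal of $\alpha$ on $c_{34} c_4 a_{56} a_6$ is, by construction of the conditional product, the composite $S_1 a_5 \to c_3 a_5 \xrightarrow{t} c_{34} c_4 a_{56} a_6$. Postcomposing $\frac{s}{t}$ with $f_{24} \otimes c_{56} a_{56} a_6$ on its $S_2$-output factors through this marginal (after a straightforward rearrangement using $copy$ and $\sigma$), and the corresponding top-left square for $t$ in $Arena^{\mathrm{Moore}}_{\CC}$ closes the diagram. Compatibility with the left $y$-morphism composition $f_{13}; f_{35}$ uses only functoriality.

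Third, for the bottom square (involving $f_{12}^{\flat}$, $(f_{13}; f_{35})^{\sharp}$, $(f_{24}; f_{46})^{\sharp}$), the main obstacle is that $\frac{s}{t}$ has forgotten the $c_{34} c_4 a_{34} a_4$ marginal, yet we need to apply $f_{24}^{\sharp}$ which requires a $c_4$-input; this is exactly what Claim \ref{claim_regen_for_y_comp_sys} restores. The strategy is: postcompose $\frac{s}{t}$ with the regeneration map to recover a factorization through $\alpha$; use that the marginal of $\alpha$ on $S_2 \otimes c_{34} c_4 a_{34} a_4$ is (by construction) the composite $S_1 a_5 \xrightarrow{S_1 \otimes f_{35}^{\sharp}} S_1 a_3 \xrightarrow{s} S_2 c_{34} a_{34} a_4$; then apply the bottom square identity for $s$ to convert into $f_{12}^{\flat} \circ (\ldots; f_{13}^{\sharp})$; finally absorb $f_{35}^{\sharp}$ into the formula for $(f_{13}; f_{35})^{\sharp}$ by lens composition and clean up using functoriality and associativity/commutativity of $copy$. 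The hard step here is bookkeeping: tracking where $S_2$ gets copied and how $f_{24}$, $f_{46}^{\sharp}$ regenerate the missing $c_4$, $a_4$ factors deterministically; Lemma \ref{lemma_CP_and_regen} is the tool that makes this manipulation legal inside the conditional product.
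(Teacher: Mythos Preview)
Your proposal is correct and follows essentially the same approach as the paper: both argue by mirroring the proof of Claim~\ref{claim_y_comp_well_def_arena_moore}, using that the marginals of the conditional product $\alpha$ are prescribed, together with the regeneration identity (Claim~\ref{claim_regen_for_y_comp_sys}, resting on Lemma~\ref{lemma_CP_and_regen}). The paper's proof notes list $f_{3456}^{\sharp}$, $f_{46}^{\sharp}$, and $f_{24}$ as the relevant deterministic maps for applying Lemma~\ref{lemma_CP_and_regen}; you mentioned only the latter two, but $f_{3456}^{\sharp}$ enters for the same reason it did in the Arena case (the middle pentagon in the bottom-square argument), so this is a minor omission rather than a gap.
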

		\begin{proof}[Proof notes.]
		As in the case of $Arena^{\mathrm{Moore}}_{\CC}$, proving that this is well-defined relies on the fact that the marginals of conditional products are known, and on Lemma \ref{lemma_CP_and_regen}, which can be applied to the deterministic maps $f_{3456}^{\sharp}$, $f_{46}^{\sharp}$, and $f_{24}$.
		\end{proof}

		 Associativity uses determinism of the lenses that are in $Arena^{\mathrm{Moore}}_{\CC}$, i.e. in $F^{-1}(1)$,  determinism of the output maps of the lenses in $F^{-1}(0 \rightarrow 1)$, and associativity of conditional products. We refer the reader to the proof of Claim \ref{claim_y_comp_assoc_arena_moore}.

		\begin{prop}\label{prop_interchange_nondet_systems}
			The $xy$-interchange law holds for $xy$-squares in $ArenaSys^{\mathrm{Moore}}_{\CC}$.
		\end{prop}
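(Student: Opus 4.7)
The plan is to parallel the structure of the proof of Proposition \ref{prop_interchange_lenses}, adapting each step to the modified composition laws of $ArenaSys^{\mathrm{Moore}}_{\CC}$. Given the restriction that the nontrivial $y$-morphisms in $ArenaSys^{\mathrm{Moore}}_{\CC}$ outside $Arena^{\mathrm{Moore}}_{\CC}$ live only in the fiber $F^{-1}(0 \to 1)$, the only genuinely new case beyond Proposition \ref{prop_interchange_lenses} is a $2\times 2$ grid $s, u, t, v$ in which the top row $s, u$ consists of nontrivial $xy$-squares of the kind defined in point \ref{enum_xy_squares_ArenaSysC} of Construction \ref{constr_ArenaSys_C} (so their Markov data have the form $S_1 \otimes a_3 \to S_2 \otimes c_{34} a_{34} a_4$), while the bottom row $t, v$ consists of $xy$-squares of $Arena^{\mathrm{Moore}}_{\CC}$. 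The middle $y$-morphism of the top row is a lens in $F^{-1}(0 \to 1)$, hence has deterministic output map $f_{25}: S_2 \to c_5$ but possibly nondeterministic input map $f_{25}^{\sharp}$; the middle $y$-morphism of the bottom row is fully deterministic.

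First, I would unfold both $(s \,;_x u) \,;_y (t \,;_x v)$ and $(s \,;_y t) \,;_x (u \,;_y v)$ using the explicit formulas from Construction \ref{constr_ArenaSys_C}, taking care that $x$-composition of the top squares uses \emph{standard} composition in the $S_\ast$-factor with copy-composition only in the lens part, whereas $x$-composition of the bottom row and $y$-composition of both rows proceed exactly as in $Arena^{\mathrm{Moore}}_{\CC}$. As in the proof of Proposition \ref{prop_interchange_lenses}, I would then identify both sides as the marginal of a single threefold conditional-product morphism $\Phi, \Psi : S_1 \otimes a_7 \to S_3 \otimes c_{46} c_6 a_{46} a_6 \otimes a_{89} a_9$ built from the four pieces of data. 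The role played by Claim \ref{claim_regen_for_y_comp} in the earlier proof is now played by the weaker Claim \ref{claim_regen_for_y_comp_sys}: on the $S_\ast$-factor we can no longer re-extract the middle $c_{34} c_4$-coordinates after marginalisation, but we can still re-extract the lens data, which is all the bookkeeping requires.

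The verification $\Phi = \Psi$ then reduces to the same chain of rewrites --- sliding, associativity and commutativity of $copy$, Lemma \ref{lemma_CP_and_regen}, and Lemma \ref{lemma_det_almost_sure_equality} --- applied to the deterministic maps $f_{25}$ (output part of the middle top lens), $(f_{58}, f_{58}^{\sharp})$ (the entire middle bottom lens, which belongs to $Arena^{\mathrm{Moore}}_{\CC}$ and is therefore fully deterministic), and the deterministic $y$-morphism components of $s, u, t, v$ themselves. Both $\Phi$ and $\Psi$ should turn out to be conditional products over the combined middle object $c_{45} c_5 c_{56} c_6 a_{45} a_5 a_{56} a_6$ with matching marginals, forcing equality.

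The main obstacle, and the reason this is not a verbatim transcription of the earlier proof, is that $f_{25}^{\sharp}$ is \emph{not} deterministic in this setting, so Lemma \ref{lemma_CP_and_regen} cannot be applied to it. One must verify that, thanks to the change from copy-composition to standard composition in the top row's $x$-composition, the map $f_{25}^{\sharp}$ simply does not appear in the formula defining $\Phi$ or $\Psi$ on the $S_\ast$-side: only $f_{25}$ is used to propagate the nondeterministic state from the first square into the second. This is precisely the reason the new $x$-composition of $xy$-squares was defined with $copy_{S_2}$ and $f_{25}$ alone, as noted after Claim \ref{claim_x_comp_well_def}. Checking this absence rigorously, and confirming that the weaker re-extraction provided by Claim \ref{claim_regen_for_y_comp_sys} suffices at every step where Claim \ref{claim_regen_for_y_comp} was invoked in the earlier proof, is the main piece of bookkeeping; once this is in place, the structural argument mirrors Proposition \ref{prop_interchange_lenses}.
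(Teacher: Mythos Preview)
Your proposal is correct and follows essentially the same approach as the paper's own proof notes: identify the single new case (top row in $F^{-1}(0\to 1)$, bottom row in $Arena^{\mathrm{Moore}}_{\CC}$), observe that the possibly nondeterministic maps $f_{14}^{\sharp}, f_{25}^{\sharp}, f_{36}^{\sharp}$ never appear in the relevant formulas, and replace Claim \ref{claim_regen_for_y_comp} by the weaker Claim \ref{claim_regen_for_y_comp_sys}. Your write-up is in fact more detailed than the paper's, which simply records that the computations of Proposition \ref{prop_interchange_lenses} only ever used determinism of the \emph{output} maps of the top $y$-morphisms and then invokes Claim \ref{claim_regen_for_y_comp_sys}.
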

		
		\begin{proof}[Proof notes.]
			By construction, the only new case we have to check is the following: 

\[\begin{tikzcd}
	\begin{array}{c} {\begin{pmatrix} \widetilde{S}_1 \\ S_1  \\ \end{pmatrix}} \end{array} && \begin{array}{c} {\begin{pmatrix} \widetilde{S}_2 \\ S_2  \\ \end{pmatrix}} \end{array} && \begin{array}{c} {\begin{pmatrix} \widetilde{S}_3 \\ S_3  \\ \end{pmatrix}} \end{array} \\
	& s && u \\
	\begin{array}{c} {\begin{pmatrix} a_4 \\ c_4  \\ \end{pmatrix}} \end{array} && \begin{array}{c} {\begin{pmatrix} a_5 \\ c_5 \\ \end{pmatrix}} \end{array} && \begin{array}{c} {\begin{pmatrix} a_6 \\ c_6  \\ \end{pmatrix}} \end{array} \\
	& t && v \\
	\begin{array}{c} {\begin{pmatrix} a_7 \\ c_7  \\ \end{pmatrix}} \end{array} && \begin{array}{c} {\begin{pmatrix} a_8 \\ c_8  \\ \end{pmatrix}} \end{array} && \begin{array}{c} {\begin{pmatrix} a_9 \\ c_9  \\ \end{pmatrix}} \end{array}
	\arrow["{(f_{12}, f^{\flat}_{12})}", shift left=2, from=1-1, to=1-3]
	\arrow[shift right=2, from=1-1, to=1-3]
	\arrow["{(f_{14}, f_{14}^{\sharp})}"', shift right=2, from=1-1, to=3-1]
	\arrow["{(f_{23}, f^{\flat}_{23})}", shift left=2, from=1-3, to=1-5]
	\arrow[shift right=2, from=1-3, to=1-5]
	\arrow["{(f_{25}, f_{25}^{\sharp})}"{description}, shift right=2, from=1-3, to=3-3]
	\arrow[shift right=2, from=1-5, to=3-5]
	\arrow[shift right=2, from=3-1, to=1-1]
	\arrow["{(a_{45}, c_{45}, f_{45}, f^{\flat}_{45})}", shift left=2, from=3-1, to=3-3]
	\arrow[shift right=2, from=3-1, to=3-3]
	\arrow["{(f_{47}, f_{47}^{\sharp})}"', shift right=2, from=3-1, to=5-1]
	\arrow[shift right=2, from=3-3, to=1-3]
	\arrow["{(a_{56}, c_{56}, f_{56}, f^{\flat}_{56})}", shift left=2, from=3-3, to=3-5]
	\arrow[shift right=2, from=3-3, to=3-5]
	\arrow["{(f_{58}, f_{58}^{\sharp})}"{description}, shift right=2, from=3-3, to=5-3]
	\arrow["{( f_{36}, f_{36}^{\sharp})}"', shift right=2, from=3-5, to=1-5]
	\arrow[shift right=2, from=3-5, to=5-5]
	\arrow[shift right=2, from=5-1, to=3-1]
	\arrow["{(a_{78}, c_{78}, f_{78}, f^{\flat}_{78})}", shift left=2, from=5-1, to=5-3]
	\arrow[shift right=2, from=5-1, to=5-3]
	\arrow[shift right=2, from=5-3, to=3-3]
	\arrow["{(a_{89}, c_{89}, f_{89}, f^{\flat}_{89})}", shift left=2, from=5-3, to=5-5]
	\arrow[shift right=2, from=5-3, to=5-5]
	\arrow["{(f_{69}, f_{69}^{\sharp})}"', shift right=2, from=5-5, to=3-5]
\end{tikzcd}\]

	The proof is very similar to that of Proposition \ref{prop_interchange_lenses}; our computations there did not need the fact that the $y$-morphisms in the top squares were deterministic, only that the output maps there were. Thus, applying the same proof ideas, invoking Claim \ref{claim_regen_for_y_comp_sys}, yields the result.
		\end{proof}

		\item There is exactly one cube, i.e. $xyz$-morphism, for each boundary. All properties of cubes are then automatically satisfied.

		\item The symmetric monoidal structure on the $x$-semi double category $F^{-1}(0 \rightarrow 1)$ is given by the tensor product in $\CC$, which restricts into the cartesian product in $\CC_{det}$; checking that this structure makes the codomain double functor $\mathrm{cod}: \mathbb{D} \rightarrow ArenaSys^{\mathrm{Moore}}_{\CC, xz}$ symmetric monoidal is straightforward, and left to the reader.
		
	\end{enumerate}
	\end{construction}

\section{Examples of discrete-time systems and behaviors}

Let us describe in more detail the kinds of discrete-time systems and behaviors mentioned in Subsection \ref{motiv_trajectories}. So, we work in the Markov category with conditionals BorelStoch, and the graph $\GG$ is the graph on natural numbers with edges $n \rightarrow n+1$.
Here, we consider systems that are given by a Markov kernel between standard Borel spaces: $update: S \times I \rightarrow S$, along with
a measurable map $expose: S \rightarrow O$. Then, for each integer $n \geq 0$, we define $update^n: S^{n+1} \times I^{n+1} \rightarrow S^{n+2}$ and $expose^n: S^{n+1} \rightarrow O^{n+1}$ via $update^n(s_0, \ldots, s_{n}, i_0, \ldots, i_n) = (s_0, \ldots, s_{n}, update(s_{n}, i_n)) \in S^{n+2}$ and $expose^n(s_0, \ldots, s_n) = (expose(s_0), \ldots, expose(s_n))$. Recall that, in this formalism, we have $S(n) = S^{n+1}$, $O(n) = O^{n+1}$, and $I(n+1) = I^{n+1}$

The restriction maps are given by the projections that forget the last coordinate. Unfolding the definitions (Point \ref{yz_squares_ArenaSys_C} in Construction \ref{constr_ArenaSys_C}, and Construction \ref{constr_Arena_C^G}), one can check that the condition for $y$-morphisms in the double category $ArenaSys^{\mathrm{Moore}}_{\CC}(\GG)$, i.e. $y$-natural transformations between $x$-semi triple functors in $[z(Ar(\GG)); ArenaSys^{\mathrm{Moore}}_{\CC}]$, corresponds to compatibility of the maps $expose^n$ with the restriction maps of $O$ and $S$; it holds by construction. Thus, each pair $(expose, update)$ as above defines a system in the sense of $T^{\mathrm{Moore}}(\CC, \GG)$.

\section{Continuous-time systems}\label{appendix_continuous_time}

One can represent continuous-time systems in our framework by using suitable graphs $\GG$. For instance, let $\GG = (\mathbb{R}_{\geq 0}, <)$ be the graph of nonnegative real numbers, with the strict order relation, and let us keep working with the Markov category with conditionals $\CC= BorelStoch$. We now describe a class of systems in the theory $T^{\mathrm{Moore}}(\CC, \GG)$.

For any Euclidean space $E$ and any nonnegative real $r$, let $C^0(r, E)$ denote the separable Banach space of continuous maps from $[0, r]$ to $E$, with the norm $||f||_{C^0} = \sup_{t} ||f(t)||$. Similarly, let $C^1(r, E)$ denote the separable Banach space of functions $f: [0, r] \rightarrow E$ that are continuous on $[0, r]$, continuously differentiable on $(0, r)$, and whose derivatives have finite limits at $0$ and $r$, with norm $||f||_{C^1} = \sup_{t} ||f(t)|| + \sup_{t} ||f^{'}(t)||$.

These separable Banach spaces yield standard Borel spaces, if one only keeps the measurable structure. In fact, each Euclidean space $E$ defines functors $E_0, E_1: \GG^{op} \rightarrow BorelStoch$, via $r \mapsto C^0(r, E)$, resp. $r \mapsto C^1(r, E)$, with restriction morphisms given by restriction of functions. Note that there is a natural transformation $E_1 \Rightarrow E_0$, given by the continuous embeddings of Banach spaces $C^1(r, E) \hookrightarrow C^0(r, E)$.

As explained in Notation \ref{notation_graphs}, one can pull back these functors along the functors $dom, cod : Ar(\GG^{op}) \rightarrow \GG^{op}$, to define objects in the double category $ArenaSys^{\mathrm{Moore}}_{\CC}(\GG)$.
Thus, given Euclidean spaces $S$, $O$, $I$, one can consider systems given by families of maps $update(r \rightarrow s):  C^1(r, S) \times C^0(s, I) \rightarrow C^1(s, S)$ and $expose(r): C^1(r, S) \rightarrow C^0(r, O)$, for all $0 \leq r < s$.
In this case, we have ${\begin{pmatrix} I \\ O  \\ \end{pmatrix}}(r \rightarrow s) = {\begin{pmatrix} C^0(s, I) \\ C^0(r, O)  \\ \end{pmatrix}}$ for all $0 \leq r < s$. For instance, this could represent a system driven by an Ordinary Differential Equation with continuous parameters\footnote{Adding an error/exception value to the state spaces, one can deal with the cases where the input is not compatible with the existing state trajectory. Developing a dependent version of the theory would yield a cleaner definition}.

\begin{remark}
	\begin{enumerate}
		\item This approach is not restricted to Euclidean spaces; one can generalize it to smooth manifolds.
		\item One can go beyond spaces of continuous, or continuously differentiable, maps. For instance, to allow for jumps, spaces of piecewise smooth maps can be relevant.
	\end{enumerate}
\end{remark}


\section{Conditional independence}\label{appendix_cond_indep}

Here, we wish to demonstrate the relevance of our key underlying assumption of conditional independence  (see point \ref{discussion_cond_indep} in Section \ref{section_discussion}), by providing sufficient conditions for it to hold. Again, we work with $\CC=BorelStoch$ and the graph $\GG$ of natural numbers.

Recall (Notation \ref{notation_indep}) that, given standard Borel spaces $X$, $Y$, $Z$ and a probability distribution $p$ on $X \times Y \times Z$, viewed as a Markov map $p: * \rightarrow X \otimes Y \otimes Z$, we let $x$, $y$, $z$ denote the generalized elements $* \rightarrow X$, $* \rightarrow Y$ and $* \rightarrow Z$, and write $x \bot_y^p z$, or $x \bot_y z$, if the distribution $p: * \rightarrow X \otimes Y \otimes Z$ displays conditional independence of $x$ and $z$ over $y$. We also extend the notation to the contexts with deterministic maps $f: X \otimes Y \otimes Z \rightarrow U$, $g: X \otimes Y \otimes Z \rightarrow v$, and $h: X \otimes Y \otimes Z \rightarrow W$; we may then write $u \bot^{f, g, h, p}_v w$, or $u \bot^{p}_v w$, or even $u \bot_v w$.

Let us now recall the formal properties of this independence relation, sometimes referred to as the semigraphoid properties. See \cite[Lemma 12.5]{FRITZ-MarkovCats} for the proofs.

\begin{prop}
	Let $\CC$ be a Markov category with conditionals. Let $p: I \rightarrow X \otimes Y \otimes Z$, $q: I \rightarrow X_1 \otimes X_2 \otimes Y \otimes Z$ and $r: I \rightarrow X \otimes Y \otimes T \otimes Z$ be morphisms in $\CC$.
	
	\begin{enumerate}
		\item (Symmetry) If $x \bot^p_{y} z$, then $z \bot^{\tau \circ p}_y x$, where $\tau: X \otimes Y \otimes Z \rightarrow Z \otimes Y \otimes X$ is the deterministic map given by the symmetric monoidal structure.
		\item (Decomposition) If $x_1, x_2 \bot^q_y z$, then $x_1 \bot^q_y z$.
		\item (Contraction) If $x \bot_{y, t}^r z$ and $x \bot_{y}^p t$, then $x \bot_{y}^p z, t$. 
		\item (Weak union) If $x_1, x_2 \bot^q_y z$, then $x_1 \bot^q_{y, x_2} z$.
	\end{enumerate}
\end{prop}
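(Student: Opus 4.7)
The plan is to follow the approach of \cite[Lemma 12.5]{FRITZ-MarkovCats}, working inside the string-diagrammatic calculus of the Markov category $\CC$. Each item unfolds the defining factorization of conditional independence---the existence of morphisms $f: Y \rightarrow X$ and $g: Y \rightarrow Z$ with $p = (p ; \pi_Y ; copy_Y ; Y \otimes copy_Y ; f \otimes Y \otimes g)$, as in the diagram preceding the statement---and reduces the semigraphoid property to a graphical manipulation involving the comonoid axioms and, for two of the four items, the universal property of conditionals.

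For symmetry, I would postcompose the defining equation with $\tau$; cocommutativity of $copy_Y$ together with naturality of the symmetric braiding shows that the reversed pair $(g, f)$ witnesses $z \bot^{\tau \circ p}_y x$. For decomposition, starting from witnesses $(f: Y \rightarrow X_1 \otimes X_2, \; g: Y \rightarrow Z)$ of $x_1, x_2 \bot^q_y z$, I would postcompose $f$ with $X_1 \otimes del_{X_2}$, using counitality of the discard, to produce a witness of $x_1 \bot^q_y z$.

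For weak union, I would use crucially that $\CC$ has conditionals: from a witness $(f: Y \rightarrow X_1 \otimes X_2, g: Y \rightarrow Z)$ of $x_1, x_2 \bot^q_y z$, I would form a conditional $f|X_2 : Y \otimes X_2 \rightarrow X_1$ of $f$ over $X_2$, and verify that $(f|X_2, \, g \circ \pi_Y)$ witnesses $x_1 \bot^q_{y, x_2} z$ via the original factorization of $q$ and the defining property of conditionals. For contraction, I would build a single witness of $x \bot_y (z, t)$ by chaining the two given witnesses: the conditional coming from $x \bot^p_y t$ expresses $t$ as a function of $y$ through the appropriate factorization, and this can be plugged into the factorization produced by $x \bot^r_{y, t} z$ to produce the combined one.

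I expect contraction to be the main obstacle, as it requires gluing two distinct factorizations through a common marginal. The delicate point is the compatibility between the two distributions along their shared marginal on $X \otimes Y$ (and $Y \otimes T$); here I would invoke uniqueness of conditionals up to almost-sure equality, analogous to Lemma \ref{lemma_CP_and_regen}. Finally, each argument extends verbatim to the generalized notation of Notation \ref{notation_indep}, since precomposing a witness with copy maps and postcomposing with deterministic morphisms preserves the defining factorization of conditional independence.
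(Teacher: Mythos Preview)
Your plan follows the cited reference \cite[Lemma 12.5]{FRITZ-MarkovCats}, which is precisely what the paper does---it gives no independent proof---so the overall approach is correct.

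One correction to your estimate of difficulty: contraction is \emph{not} the hard case and does not require conditionals or any almost-sure uniqueness argument. As the paper notes just after the statement, symmetry, decomposition, and contraction all hold in an arbitrary Markov category; only weak union needs conditionals. For contraction, simply substitute one factorization into the other: from $x \bot_y t$ you get that the marginal $\mu_{YT}$ of $r$ on $Y\otimes T$ factors as $\mu_Y;copy_Y;Y\otimes g_2$ for some $g_2:Y\to T$. Plugging this expression for $\mu_{YT}$ into the factorization $r=\mu_{YT};copy;\ldots;f_1\otimes(YT)\otimes g_1$ coming from $x\bot_{y,t} z$, and rearranging using coassociativity of $copy$, directly yields witnesses $Y\to X$ and $Y\to T\otimes Z$ for $x\bot_y(t,z)$. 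No appeal to Lemma~\ref{lemma_CP_and_regen} is needed. Your plan for weak union, by contrast, is exactly right: that is where a conditional $f|X_2$ must be formed, and this is the step that genuinely uses the hypothesis that $\CC$ has conditionals.
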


In fact, the first three properties hold even if $\CC$ does not have conditionals.

Consider a system $S= {\begin{pmatrix} S \\ S  \\ \end{pmatrix}} \leftrightarrows {\begin{pmatrix} I_1 \\ O_1  \\ \end{pmatrix}}$ in the general sense of the theory $T^{\mathrm{Moore}}(\CC, \GG)$ and a lens $L= {\begin{pmatrix} I_1 \\ O_1  \\ \end{pmatrix}} \leftrightarrows {\begin{pmatrix} I_2 \\ O_2  \\ \end{pmatrix}}$, so that the composite yields a system $S^{'} = {\begin{pmatrix} S \\ S  \\ \end{pmatrix}} \leftrightarrows {\begin{pmatrix} I_2 \\ O_2  \\ \end{pmatrix}}$.

Now, consider a trajectory of the composite system $S^{'}$, i.e. a square $t^{\prime}$ in $ArenaSys^{\mathrm{Moore}}_{\CC}(\GG)$ as below:

\[\begin{tikzcd}
	\begin{array}{c} {\begin{pmatrix} * \\ *  \\ \end{pmatrix}} \end{array} && \begin{array}{c} {\begin{pmatrix} S \\ S  \\ \end{pmatrix}} \end{array} \\
	\begin{array}{c} {\begin{pmatrix} * \\ *  \\ \end{pmatrix}} \end{array} && \begin{array}{c} {\begin{pmatrix} I_1 \\ O_1  \\ \end{pmatrix}} \end{array} \\
	\begin{array}{c} {\begin{pmatrix} * \\ *  \\ \end{pmatrix}} \end{array} && \begin{array}{c} {\begin{pmatrix} I_2 \\ O_2  \\ \end{pmatrix}} \end{array}
	\arrow[shift right, from=1-1, to=1-3]
	\arrow[shift left, from=1-1, to=1-3]
	\arrow[shift right, from=1-1, to=2-1]
	\arrow[shift right, from=1-3, to=2-3]
	\arrow[shift right, from=2-1, to=1-1]
	\arrow[shift right, no head, from=2-1, to=3-1]
	\arrow[shift right, from=2-3, to=1-3]
	\arrow[shift right, from=2-3, to=3-3]
	\arrow[shift right, no head, from=3-1, to=2-1]
	\arrow[shift right, from=3-1, to=3-3]
	\arrow[shift left, from=3-1, to=3-3]
	\arrow[shift right, from=3-3, to=2-3]
\end{tikzcd}\]

\begin{remark}
	In this context, we have, for all $n \in \mathbb{N}$, a Markov map/probability distribution $t^{\prime, n}: * \rightarrow S(n) \otimes I_2(n+1)$. Then, using the expose maps of the system $S$ and the lens $L$, the update map of the lens $L$, and $copy$ maps, all of which are \emph{deterministic}, such a $t^{\prime, n}$ induces a probability distribution $\mu^n: * \rightarrow S(n) \otimes I_2(n+1) \otimes I_1(n+1) \otimes O_1(n) \otimes O_2(n)$.
\end{remark}

\begin{claim}
	The collection of Markov maps $(\mu^n)_n$ yields squares $t$ and $t_{12}$, by marginalizing: the maps $t^n: * \rightarrow S(n) \otimes I_1(n+1)$ and $t_{12}^n: * \rightarrow O_1(n) \otimes I_2(n+1)$ defined as marginals of $\mu^n$, for all $n$, define squares. 
\end{claim}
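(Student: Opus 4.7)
The plan is to unfold the construction of $\mu^n$ and verify the three defining equations of an $xy$-square (point 7 of Theorem~\ref{theo_ArenaSys_C}) for both $t^n$ and $t_{12}^n$, exploiting the fact that $t^{\prime}$ is already a square for the composite system $S^{\prime}$, together with determinism of the maps used to build $\mu^n$ from $t^{\prime,n}$.

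First I would write $\mu^n$ explicitly: starting from $t^{\prime,n}\colon *\to S(n)\otimes I_2(n+1)$, one applies $copy$ to $S(n)$ and $I_2(n+1)$, then the deterministic output map $expose_S^n\colon S(n)\to O_1(n)$ of the system $S$, the deterministic output map of the lens $L$, and the deterministic update map of $L$ (viewed as a function of $O_1(n)$ and $I_2(n+1)$ producing $I_1(n+1)$ and $O_2(n)$). This exhibits $\mu^n$ as $(t^{\prime,n};d^n)$ for a suitable deterministic $d^n$. Then the two announced marginals $t^n$ and $t_{12}^n$ are defined by postcomposing $\mu^n$ with the appropriate projections, which are themselves deterministic.

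Next I would check that the collection $(t^n)_n$ defines an $xy$-square over the system $S$, i.e. that the three equations of point 7 of Theorem~\ref{theo_ArenaSys_C} hold. The top-right equation $f_{12};f_{24}=f_{13};f_{34};\pi_{c_4}$ reduces to: the marginal of $t^n$ on $I_1(n+1)$, composed with the $I_1$-expose of $S$, equals the marginal of $t^n$ on $O_1(n)$. This follows immediately from the way $I_1(n+1)$ was introduced in $\mu^n$: it is the output of a deterministic update-map whose arguments include the copy of $O_1(n)=expose_S^n(S(n))$, so that projecting onto $O_1(n)$ and applying $expose_S^n$ to the $S(n)$-component give equal distributions. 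For the top-left equation involving the update map of $S$, I would use that $t^{\prime}$ is a square for $S^{\prime}$: precomposition with the update of the composite system $S^{\prime}$ can be rewritten as precomposition with the update of $S$ fed through the lens $L$, and the deterministic maps defining $\mu^n$ are exactly the ones encoding this factorization. The bottom equation involving $copy$ and $f_{12}^{\flat}$ is analogous, using Lemma~\ref{lemma_CP_and_regen} to move the deterministic parts around the $copy$ morphisms. The square $t_{12}^n$ for the lens $L$ is verified in exactly the same way, with the roles of "state" and "parameter" exchanged: the three equations become compatibility with the $expose$ and $update$ maps of $L$, which again follow by construction of $\mu^n$ from deterministic components of $L$.

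Finally I would check naturality in $n$, i.e. compatibility with the time-restriction maps of $\GG^{op}$; this is inherited from the corresponding naturality of $t^{\prime}$ and the deterministic-naturality of the $expose$ and $update$ maps. The main obstacle is bookkeeping: $\mu^n$ lives in a five-fold tensor product, and each of the three square axioms requires chasing a fairly large commutative diagram where one must carefully track which tensor factors are being marginalized, which are being copied, and which deterministic map is being applied. Determinism of $expose_S^n$, of the lens's structure maps, and of the copy/delete morphisms is what allows all these rearrangements to go through; there is no need to assume any conditional independence at this stage (that assumption will only be needed afterwards, to reconstruct $t^{\prime}$ from $t$ and $t_{12}$ via a conditional product).
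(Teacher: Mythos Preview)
Your approach is correct and essentially the same as the paper's, whose proof is a one-line reference to Lemma~\ref{lemma_CP_and_regen}; your outline simply expands on that. Two minor slips: the equation involving the update $f_{24}^{\sharp}$ is the \emph{bottom} one (not the top-left), and the naturality-in-$n$ check you propose at the end is not part of what needs to be verified here, since $xyz$-cubes in $ArenaSys^{\mathrm{Moore}}_{\CC}$ are thin and hence the $z$-naturality condition on an $xy$-natural transformation is vacuous (cf.\ point~\ref{discussion_too_lax} in Section~\ref{section_discussion}).
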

\begin{proof}[Proof notes.]
	These computations rely on Lemma \ref{lemma_CP_and_regen}.
\end{proof}

Now, we would like to know whether the equation $t^{\prime} = \frac{t}{t_{12}}$ holds.

\begin{claim}
	Let $n \in \mathbb{N}_{\geq 1}$. Let $q$ be the Markov map defined as $q:= (t^{\prime,n} ;  res_S(0 \rightarrow 1 \rightarrow \cdots \rightarrow n) \otimes I_2(n+1))$, so that    $q: * \rightarrow S(0) \otimes I_2(n+1)$. Assume the following
	
	\begin{enumerate}
		\item The Markov map $t^{\prime, n}: * \rightarrow S(n) \otimes I_2(n+1)$ is generated from the map $q$ using the update map of the system $S^{'}$, and the (deterministic) restriction maps given by the functor $I_2$. 
		\item We have $i_2(n+1) \bot^{\mu^n}_{o_1(n)} s(0)$.
	\end{enumerate}

	Then, we have the conditional independence property $i_2(n+1)  \bot_{i_1(n+1) o_1(n)} s(n)$. In other words, the map $t^{\prime, n}$ is the $n^{th}$ component of the $y$-composite $\frac{t}{t_{12}}$.  
	
\end{claim}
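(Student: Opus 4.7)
The proof combines the two hypotheses using the semigraphoid properties of conditional independence in a Markov category with conditionals (symmetry, decomposition, weak union and contraction, cf.\ \cite[Lemma 12.5]{FRITZ-MarkovCats}), together with the elementary fact that a deterministic function of variables in the conditioning set may be added to it without affecting a conditional independence. This last fact follows from writing out the conditional product explicitly: if $w = g(y,x)$ is deterministic and $x \bot_y z$, then the delta-factor $\delta_{g(y,x)}(w)$ attaches only to the $(x,y)$-marginal, so the factorisation over $y$ persists after conditioning on $w$.

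First, I would upgrade assumption 2. The full history $i_1(n+1)$ is a \emph{deterministic} function of $(o_1(n), i_2(n+1))$, obtained by applying the lens's deterministic update map pointwise in time. The principle above then turns $i_2(n+1) \bot^{\mu^n}_{o_1(n)} s(0)$ into
$$ i_2(n+1) \;\bot^{\mu^n}_{o_1(n),\, i_1(n+1)}\; s(0). $$

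Second, I would extract from assumption 1 the complementary independence $s(n) \bot^{\mu^n}_{s(0),\, i_1(n+1)} i_2(n+1)$. By hypothesis, $\mu^n$ is built from $q$ by iterating the $S'$-update, which itself decomposes into the deterministic expose $S \to O_1$, the deterministic lens update $(o_1, i_2) \mapsto i_1$, and the \emph{nondeterministic} system update $S \otimes I_1 \to S$. Since the nondeterministic piece only ever consumes $i_1$, conditioning on the full $i_1$-history together with $s(0)$ decouples the state evolution from $i_2$. As $o_1(n) = \mathrm{expose}(s(n))$ is a deterministic function of $s(n)$, one can first enlarge the left-hand side to $(s(n), o_1(n))$ and then apply weak union, yielding $s(n) \bot^{\mu^n}_{s(0),\, i_1(n+1),\, o_1(n)} i_2(n+1)$. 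Contraction applied with $x = i_2(n+1)$, $y = (i_1(n+1), o_1(n))$, $t = s(0)$, $z = s(n)$, followed by decomposition, then gives the claim.

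The main obstacle will be making the second step fully rigorous: translating the construction hypothesis "$t^{\prime,n}$ is generated from $q$ by iterating the $S'$-update" into an actual conditional independence in $\CC$. The cleanest approach is probably an induction on $n$, unfolding one $S'$-update step at a time and observing that, after marginalising over the intermediate states $s(1), \ldots, s(n-1)$, the conditional kernel $(s(0), i_1(n+1)) \to s(n)$ is a composite of system-update kernels — so in particular has no residual dependence on $i_2(n+1)$. This produces the conditional-product factorisation of the relevant marginal of $\mu^n$ over the separator $(s(0), i_1(n+1))$ that witnesses $s(n) \bot^{\mu^n}_{s(0), i_1(n+1)} i_2(n+1)$ in the sense of Definition \ref{defi_Markov_cat}. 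The careful bookkeeping of the history objects $S(k), I_1(k+1), I_2(n+1), O_1(k)$ and their restriction morphisms is where the technical subtlety lies.
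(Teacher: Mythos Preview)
Your proposal is correct and uses the same ingredients as the paper's proof: the semigraphoid axioms, the fact that $i_1(n+1)$ is a deterministic function of $(o_1(n), i_2(n+1))$, and an inductive unfolding of hypothesis~1. Your first step (upgrading hypothesis~2 to $i_2(n+1) \bot_{o_1(n),\, i_1(n+1)} s(0)$) is exactly the paper's base case.

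The organisation differs slightly. You propose a two-stage argument: first establish the auxiliary independence $s(n) \bot_{s(0),\, i_1(n+1),\, o_1(n)} i_2(n+1)$ from hypothesis~1 (by induction), then combine it with the upgraded hypothesis~2 via contraction and decomposition. The paper instead runs a single induction directly on the target statement, proving $i_2(n+1) \bot_{o_1(n),\, i_1(n+1)} s(k)$ for all $0 \le k \le n$. The inductive step simply observes that $s(k+1)$ is obtained from $(s(k), i_1(k+1))$ via the (Markov) update map, with $i_1(k+1)$ a deterministic function of the conditioning variable $i_1(n+1)$; hence post-composition preserves the conditional independence. This collapses your two stages into one and avoids the detour through the auxiliary independence and the contraction step, so it is a little shorter---but the underlying idea is the same.
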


The first hypothesis of the Claim means the following: we define a family of Markov maps $q_k: * \rightarrow S(k) \otimes I_2(n+1)$, for $0 \leq k \leq n$, by induction.
We put $q_0 =q$, and $q_{k+1}$ is the composite $* \xrightarrow{q_k} S(k) I_2(n+1) \xrightarrow{S(k) \otimes (copy_{I_2(n+1)}; res \otimes I_2(n+1))} S(k) I_2(k+1) I_2 (n+1) \xrightarrow{up_{S^{'}} \otimes I_2(n+1)} S(k+1) I_2(n+1)$. Then, the assumption is that $t^{\prime, n}: * \rightarrow S(n) \otimes I_2(n+1)$ is equal to $q_{n}$.

\begin{proof}
	We prove by induction on $0 \leq k \leq n$ that $i_2(n+1) \bot^{\mu^n}_{o_1(n) i_1(n+1)} s(k) $. For $k=0$, this follows from the fact that, under the distribution $\mu^n$, we know that $i_1(n+1) \in I_1(n+1)$ is the image of $(o_1(n), i_2(n+1))$ under a \emph{deterministic} map, namely the $n^{th}$ component of the update map of the lens $L$, almost surely. More explicitly, this observation, along with the second hypothesis, implies the independence $i_2(n+1) i_1(n+1) \bot^{\mu^n}_{o_1(n)} s(0) $, which then implies, using the semigraphoid properties of conditional independence, $i_2(n+1)  \bot^{\mu^n}_{o_1(n) i_1(n+1)} s(0)$.

	To conclude, let us prove the property for $k+1$ assuming it for $k$, given $0 \leq k < n$. By induction hypothesis, we have $i_2(n+1)  \bot^{\mu^n}_{o_1(n) i_1(n+1)} s(k)$. By assumption, we know that under the distribution $\mu^n$, the marginal $s(k+1) \in S(k+1)$ is equal to $update_S^k(s(k), i_1(k+1))$. By compatibility of conditional independence with post-composition, this implies that $i_2(n+1)  \bot^{\mu^n}_{o_1(n) i_1(n+1)} s(k+1)$, as required.
\end{proof}

\section{Behaviors of tensor products of systems}

In this section, we discuss behaviors for tensor products of systems. More specifically, we define operations that yield behaviors of tensor products, given behaviors of the components and extra compatibility data. This is a very first step towards answering the questions described in Section \ref{section_outlook}, \ref{outlook_trajectories_tensors}. We fix arbitrary $\CC$ and $\GG$.

Let us begin with the following observation: in the triple category $ArenaSys^{\mathrm{Moore}}_{\CC}$, there are canonical morphisms from tensor products, when these are defined, to their factors. For instance, if $(f_i, f_i^{\sharp}): {\begin{pmatrix} S_i \\ S_i  \\ \end{pmatrix}} \leftrightarrows {\begin{pmatrix} I_i \\ O_i  \\ \end{pmatrix}}$
are $y$-morphisms in the fiber over $0 \rightarrow 1$, for $i=1,2$, then there exist canonical $xy$-squares $\pi_1$, $\pi_2$ of the following shapes:

\[\begin{tikzcd}
	\begin{array}{c} {\begin{pmatrix} \widetilde{S}_1 \otimes \widetilde{S}_2 \\ S_1 \otimes S_2  \\ \end{pmatrix}} \end{array} && \begin{array}{c} {\begin{pmatrix} \widetilde{S}_1 \\ S_1  \\ \end{pmatrix}} \end{array} \\
	& {\pi_{1}} \\
	\begin{array}{c} {\begin{pmatrix} I_1 \otimes I_2 \\ O_1 \otimes O_2  \\ \end{pmatrix}} \end{array} && \begin{array}{c} {\begin{pmatrix} I_1 \\ O_1  \\ \end{pmatrix}} \end{array} \\
	\begin{array}{c} {\begin{pmatrix} \widetilde{S}_1 \otimes \widetilde{S}_2 \\ S_1 \otimes S_2  \\ \end{pmatrix}} \end{array} && \begin{array}{c} {\begin{pmatrix} \widetilde{S}_2 \\ S_2  \\ \end{pmatrix}} \end{array} \\
	& {\pi_2} \\
	\begin{array}{c} {\begin{pmatrix} I_1 \otimes I_2 \\ O_1 \otimes O_2  \\ \end{pmatrix}} \end{array} && \begin{array}{c} {\begin{pmatrix} I_2 \\ O_2  \\ \end{pmatrix}} \end{array}
	\arrow["{(\pi_1, \pi_1)}", shift left=2, from=1-1, to=1-3]
	\arrow[shift right=2, from=1-1, to=1-3]
	\arrow["{(f_1 \otimes f_2, f_{1}^{\sharp} \otimes f_{2}^{\sharp})}"', shift right=2, from=1-1, to=3-1]
	\arrow[shift right=2, from=1-3, to=3-3]
	\arrow[shift right=2, from=3-1, to=1-1]
	\arrow[shift left=2, from=3-1, to=3-3]
	\arrow["{(\pi_{1}, \pi_1)}"', shift right=2, from=3-1, to=3-3]
	\arrow["{(f_{1}, f_{1}^{\sharp})}"', shift right=2, from=3-3, to=1-3]
	\arrow["{(\pi_2, \pi_2)}", shift left=2, from=4-1, to=4-3]
	\arrow[shift right=2, from=4-1, to=4-3]
	\arrow["{(f_1 \otimes f_2, f_{1}^{\sharp} \otimes f_{2}^{\sharp})}"', shift right=2, from=4-1, to=6-1]
	\arrow[shift right=2, from=4-3, to=6-3]
	\arrow[shift right=2, from=6-1, to=4-1]
	\arrow[shift left=2, from=6-1, to=6-3]
	\arrow["{(\pi_2, \pi_2)}"', shift right=2, from=6-1, to=6-3]
	\arrow["{(f_{2}, f_{2}^{\sharp})}"', shift right=2, from=6-3, to=4-3]
\end{tikzcd}\]

Note that these squares only contain deterministic maps.

\begin{definition}
	We work in the triple category $ArenaSys^{\mathrm{Moore}}_{\CC}$.
	
	\begin{itemize}
		\item Let $s_1$ and $s_2$ be $xy$-squares as below:

\[\begin{tikzcd}
	\begin{array}{c} {\begin{pmatrix} \widetilde{T} \\ T  \\ \end{pmatrix}} \end{array} && \begin{array}{c} {\begin{pmatrix} \widetilde{S}_1 \\ S_1  \\ \end{pmatrix}} \end{array} \\
	& {s_{1}} \\
	\begin{array}{c} {\begin{pmatrix} I_0 \\ O_0  \\ \end{pmatrix}} \end{array} && \begin{array}{c} {\begin{pmatrix} I_1 \\ O_1  \\ \end{pmatrix}} \end{array} \\
	\begin{array}{c} {\begin{pmatrix} \widetilde{T} \\ T  \\ \end{pmatrix}} \end{array} && \begin{array}{c} {\begin{pmatrix} \widetilde{S}_2 \\ S_2  \\ \end{pmatrix}} \end{array} \\
	& {s_2} \\
	\begin{array}{c} {\begin{pmatrix} I_0 \\ O_0  \\ \end{pmatrix}} \end{array} && \begin{array}{c} {\begin{pmatrix} I_2 \\ O_2  \\ \end{pmatrix}} \end{array}
	\arrow["{(\phi_{01}, {\phi}^{\flat}_{01})}", shift left=2, from=1-1, to=1-3]
	\arrow[shift right=2, from=1-1, to=1-3]
	\arrow["{(f_0, f_{0}^{\sharp})}"', shift right=2, from=1-1, to=3-1]
	\arrow[shift right=2, from=1-3, to=3-3]
	\arrow[shift right=2, from=3-1, to=1-1]
	\arrow[shift left=2, from=3-1, to=3-3]
	\arrow["{(g_{01}, g^{\flat}_{01})}"', shift right=2, from=3-1, to=3-3]
	\arrow["{(f_{1}, f_{1}^{\sharp})}"', shift right=2, from=3-3, to=1-3]
	\arrow["{(\phi_{02}, {\phi}^{\flat}_{02})}", shift left=2, from=4-1, to=4-3]
	\arrow[shift right=2, from=4-1, to=4-3]
	\arrow["{(f_{0}, f_{0}^{\sharp})}"', shift right=2, from=4-1, to=6-1]
	\arrow[shift right=2, from=4-3, to=6-3]
	\arrow[shift right=2, from=6-1, to=4-1]
	\arrow[shift left=2, from=6-1, to=6-3]
	\arrow["{(g_{02}, g^{\flat}_{02})}"', shift right=2, from=6-1, to=6-3]
	\arrow["{(f_{2}, f_{2}^{\sharp})}"', shift right=2, from=6-3, to=4-3]
\end{tikzcd}\]

	Let $r_0: \widetilde{T} \rightarrow T$, $r_1: \widetilde{S}_1 \rightarrow S_1$ and $r_2: \widetilde{S}_2 \rightarrow S_2$ denote the deterministic structure maps.
	
	\item Let $(g_{012}, g_{012}^{\flat}): {\begin{pmatrix} I_0 \\ O_0  \\ \end{pmatrix}} \rightrightarrows {\begin{pmatrix} I_1 \otimes I_2 \\ O_1 \otimes O_2  \\ \end{pmatrix}}$ be an $x$-morphism whose projections are equal to $(g_{01}, g_{01}^{\flat})$ and $(g_{02}, g_{02}^{\flat})$ respectively.
	
	\item Assume that $I_0$ is isomorphic to the monoidal unit, and that the map $f_0^{\sharp}:  T \otimes I_0 \simeq T \rightarrow \widetilde{T}$ is an isomorphism in $\CC$.

	Then, we define the composite of $s_1$ and $s_2$ \emph{over} $(g_{012}, g_{012}^{\flat})$, which we denote $s_1 \nabla_{g_{012}, g_{012}^{\flat}} s_2$ or $s_1 \nabla s_2$, as the following $xy$-square:

\[\begin{tikzcd}
	\begin{array}{c} {\begin{pmatrix} \widetilde{T} \\ T  \\ \end{pmatrix}} \end{array} && \begin{array}{c} {\begin{pmatrix} \widetilde{S}_1 \otimes \widetilde{S}_2\\ S_1 \otimes S_2  \\ \end{pmatrix}} \end{array} \\
	& {s_{1}\nabla s_2} \\
	\begin{array}{c} {\begin{pmatrix} I_0 \\ O_0  \\ \end{pmatrix}} \end{array} && \begin{array}{c} {\begin{pmatrix} I_1\otimes I_2 \\ O_1\otimes O_2  \\ \end{pmatrix}} \end{array}
	\arrow["{(\phi_{012}, {\phi}^{\flat}_{012})}", shift left=2, from=1-1, to=1-3]
	\arrow[shift right=2, from=1-1, to=1-3]
	\arrow["{(f_0, f_{0}^{\sharp})}"', shift right=2, from=1-1, to=3-1]
	\arrow[shift right=2, from=1-3, to=3-3]
	\arrow[shift right=2, from=3-1, to=1-1]
	\arrow[shift left=2, from=3-1, to=3-3]
	\arrow["{(g_{012}, g^{\flat}_{012})}"', shift right=2, from=3-1, to=3-3]
	\arrow["{(f_{1}\otimes f_2, f_{1}^{\sharp}\otimes f_{2}^{\sharp})}"', shift right=2, from=3-3, to=1-3]
\end{tikzcd}\]

\begin{enumerate}
	\item For $i= 1, 2$, let $t_i:  T \otimes I_0 \rightarrow 
	S_i \otimes I_1 \otimes I_2 \otimes  O_1 \otimes O_2$ be the conditional product over $I_i \otimes O_i$ of the maps $(s_i; copy_{S_i}; f_i):  T \otimes I_0 \rightarrow 
	 S_i \otimes I_i \otimes O_i$ and $(f_0 ; g_{012}^{\flat}):  T \otimes I_0 \rightarrow I_1 \otimes I_2 \otimes O_1 \otimes O_2$.
	
	Then, let $t:  T \otimes I_0 \rightarrow   S_1 \otimes S_2 \otimes I_1 \otimes I_2 \otimes O_1 \otimes O_2$ be the conditional product over $I_1 \otimes I_2 \otimes O_1 \otimes O_2$ of the maps $t_i$.
	\item The morphism $s_1 \nabla s_2:  T \otimes I_0 \rightarrow   S_1 \otimes S_2 \otimes I_1 \otimes I_2$ is then defined as the composite $t; del_{O_1 \otimes O_2}$.
	
	\item Let us now define the morphisms $\phi_{012}: T \rightarrow S_1 \otimes S_2$ and $\phi_{012}^{\flat}: \widetilde{T} \rightarrow \widetilde{S}_1 \otimes \widetilde{S}_2$. The morphism $\phi_{012}$ is defined as the composite $$T \xrightarrow{\sim}  T \otimes I_0 \xrightarrow{s_1 \nabla s_2}   S_1 \otimes S_2 \otimes I_1 \otimes I_2 \xrightarrow{\pi} S_1 \otimes S_2.$$

	The morphism $\phi_{012}^{\flat}: \widetilde{T} \rightarrow \widetilde{S}_1 \otimes \widetilde{S}_2$ is defined as the following composite:
	 $$\widetilde{T} \xrightarrow{r_0} T \xrightarrow{\sim}  T \otimes I_0 \xrightarrow{s_1 \nabla s_2}   S_1 \otimes S_2 \otimes I_1 \otimes I_2 \xrightarrow{f_1^{\sharp} \otimes f_{2}^{\sharp}} \widetilde{S}_1 \otimes \widetilde{S}_2.$$
\end{enumerate}

\end{itemize}
	
\end{definition}

\begin{claim}
	This composition is well-defined, i.e. these maps define an $xy$-square in $ArenaSys^{\mathrm{Moore}}_{\CC}$.
\end{claim}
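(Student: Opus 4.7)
The plan is to verify four things: the compatibility of $(\phi_{012}, \phi_{012}^{\flat})$ as an $x$-morphism in the fiber $F^{-1}(0)$, i.e. that $\phi_{012}^{\flat} ; (r_1 \otimes r_2) = r_0 ; \phi_{012}$; and the three square-commutation conditions (a), (b), (c) of Theorem \ref{theo_ArenaSys_C} for the underlying Markov map $s_1 \nabla s_2$.

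The $x$-morphism compatibility is immediate from the $y$-morphism axiom $f_i^{\sharp} ; r_i = \pi_{S_i}$ applied coordinatewise: both sides reduce to $r_0$ followed by the $S_1 \otimes S_2$-marginal of $s_1 \nabla s_2$. Condition (c), namely $(s_1 \nabla s_2);(f_1^{\sharp} \otimes f_2^{\sharp}) = f_0^{\sharp}; \phi_{012}^{\flat}$, becomes essentially tautological once we exploit the hypothesis that $I_0 \simeq 1$ and $f_0^{\sharp}$ is an isomorphism: expanding $f_0^{\sharp}; \phi_{012}^{\flat}$, the composite $f_0^{\sharp} ; r_0$ equals $\pi_T : T \otimes I_0 \rightarrow T$ (by the $y$-morphism axiom), which is the inverse of the structure isomorphism $T \xrightarrow{\sim} T \otimes I_0$ appearing in the definition of $\phi_{012}^{\flat}$; the two cancel and we recover the left-hand side.

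Conditions (a) and (b) are the substantive part and rely on the marginalization properties of the conditional products $t_1$, $t_2$, and $t$. For (b), one computes the marginal of $s_1 \nabla s_2$ on the input/output part: by construction, each $t_i$ has marginal $f_0 ; g_{012}^{\flat}$ on $I_1 \otimes I_2 \otimes O_1 \otimes O_2$ (using the hypothesis that $(g_{012}, g_{012}^{\flat})$ projects to $g_{0i}$, together with condition (b) for $s_i$), and the outer conditional product $t$ over this shared factor preserves this marginal. Lemma \ref{lemma_CP_and_regen}, combined with determinism of $f_1$ and $f_2$, justifies that the $O_i$-component can be deterministically recovered from $S_i$ via $f_i$, so the defining formula $s_1 \nabla s_2 = t ; del_{O_1 \otimes O_2}$ gives the correct marginal. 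Condition (a) is then obtained by further projecting condition (b) onto the $O$-components and invoking the analogous condition (a) for each $s_i$, which provides $\phi_{0i} ; f_i = f_0 ; g_{0i}$.

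The main technical obstacle will be correctly identifying the shared marginals at each stage of the nested conditional products $t_1, t_2, t$ and matching them against $f_0 ; g_{012}^{\flat}$, via repeated application of Lemma \ref{lemma_CP_and_regen} to re-insert the deterministic $O_i$-components recovered from $S_i$ through $f_i$. Once this bookkeeping is carried out, the remaining items reduce to unfolding definitions.
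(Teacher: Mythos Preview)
Your proposal is correct and follows essentially the same approach as the paper: both verify the $x$-morphism compatibility via the $y$-morphism axiom $f_i^{\sharp};r_i = \pi_{S_i}$, handle condition (c) by exploiting that $f_0^{\sharp}$ is an isomorphism inverse to $T \xrightarrow{\sim} T\otimes I_0$ (after composing with $r_0$), obtain condition (b) from the marginalization properties of the nested conditional products together with Lemma \ref{lemma_CP_and_regen} (to regenerate the $O_i$-components from $S_i$ via the deterministic $f_i$), and derive condition (a) from (b) by further projection. The paper presents these steps via explicit commutative diagrams rather than prose, but the underlying arguments are the same.
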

\begin{proof}
	
	\begin{itemize}
		\item Let us first show that the following square commutes:

\[\begin{tikzcd}
	{\widetilde{T}} & {\widetilde{S}_1 \otimes \widetilde{S}_2} \\
	T & {S_1 \otimes S_2}
	\arrow["{\phi_{012}^{\flat}}", from=1-1, to=1-2]
	\arrow["{r_0}"', from=1-1, to=2-1]
	\arrow["{r_1 \otimes r_2}", from=1-2, to=2-2]
	\arrow["{\phi_{012}}"', from=2-1, to=2-2]
\end{tikzcd}\]

We claim that the following diagram commutes:

\[\begin{tikzcd}
	{\widetilde{T}} &&&&& {\widetilde{S}_1 \otimes \widetilde{S}_2} \\
	& T \\
	&& {{ T \otimes I_0}} && {{S_1 \otimes S_2 \otimes I_1 \otimes I_2}} \\
	\\
	T &&&&& {S_1 \otimes S_2}
	\arrow["{\phi_{012}^{\flat}}", from=1-1, to=1-6]
	\arrow["{r_0}"{description}, from=1-1, to=2-2]
	\arrow["{r_0}"', from=1-1, to=5-1]
	\arrow["{r_1 \otimes r_2}", from=1-6, to=5-6]
	\arrow["\sim"{description}, from=2-2, to=3-3]
	\arrow["{s_1 \nabla s_2}", from=3-3, to=3-5]
	\arrow["{\sigma; (f_1^{\sharp} \otimes f_2^{\sharp})}"{description}, from=3-5, to=1-6]
	\arrow["\pi", from=3-5, to=5-6]
	\arrow["\sim", from=5-1, to=3-3]
	\arrow["{\phi_{012}}"', from=5-1, to=5-6]
\end{tikzcd}\]

The top and bottom polygons commute by definition of the maps $\phi_{012}$ and $\phi_{012}^{\flat}$. The rightmost triangle commutes thanks to the properties of the maps $f_i^{\sharp}$ and $r_i$, i.e. that fact that ${\begin{pmatrix} \widetilde{S}_i \\ S_i  \\ \end{pmatrix}} \leftrightarrows {\begin{pmatrix} I_i \\ O_i  \\ \end{pmatrix}}$ are $y$-morphisms, for the structure maps $r_i: \widetilde{S}_i \rightarrow S_i$.

\item Then, let us prove that the square below commutes:

\[\begin{tikzcd}
	{{T \otimes I_0}} && {{S_1 \otimes S_2 \otimes I_1 \otimes I_2}} \\
	\\
	{I_0 \otimes O_0} && {I_1 \otimes I_2 \otimes O_1 \otimes O_2}
	\arrow["{s_1 \nabla s_2}", from=1-1, to=1-3]
	\arrow["{\pi; f_0 ; \sigma}"', from=1-1, to=3-1]
	\arrow["{\pi; (f_1 \otimes f_2) ; \sigma}", from=1-3, to=3-3]
	\arrow["{g_{012}^{\flat}}"', from=3-1, to=3-3]
\end{tikzcd}\]

We claim that the following diagram commutes:

\[\begin{tikzcd}
	{{T \otimes I_0}} && {{S_1 \otimes S_2 \otimes I_1 \otimes I_2}} \\
	\\
	& {{S_1 \otimes S_2 \otimes I_1 \otimes I_2 \otimes O_1 \otimes O_2}} \\
	\\
	{I_0 \otimes O_0} && {I_1 \otimes I_2 \otimes O_1 \otimes O_2}
	\arrow["{s_1 \nabla s_2}", from=1-1, to=1-3]
	\arrow["t", from=1-1, to=3-2]
	\arrow["{\pi; f_0 ; \sigma}"', from=1-1, to=5-1]
	\arrow["{\pi; (f_1 \otimes f_2) ; \sigma}", from=1-3, to=5-3]
	\arrow["\pi"{description}, from=3-2, to=5-3]
	\arrow["{g_{012}^{\flat}}"', from=5-1, to=5-3]
\end{tikzcd}\]

Indeed, the bottom-left polygon commutes by marginalisation of the map $t$, which was defined as a conditional product. The top-right polygon commutes thanks to Lemma \ref{lemma_CP_and_regen}.

\item Now, commutativity of this square implies another commutativity property required for $xy$-squares; we claim that the diagram below commutes:

\[\begin{tikzcd}
	T &&&& {S_1 \otimes S_2} \\
	& {{T \otimes I_0}} && {{S_1 \otimes S_2 \otimes I_1 \otimes I_2}} \\
	\\
	& {I_0 \otimes O_0} && {I_1 \otimes I_2 \otimes O_1 \otimes O_2} \\
	{O_0} &&&& {O_1 \otimes O_2}
	\arrow["{\phi_{012}}", from=1-1, to=1-5]
	\arrow["\sim", from=1-1, to=2-2]
	\arrow["{f_0}"', from=1-1, to=5-1]
	\arrow["{f_1 \otimes f_2}", from=1-5, to=5-5]
	\arrow["{s_1 \nabla s_2}", from=2-2, to=2-4]
	\arrow["{\pi; f_0 ; \sigma}"', from=2-2, to=4-2]
	\arrow["\pi", from=2-4, to=1-5]
	\arrow["{\pi; (f_1 \otimes f_2) ; \sigma}", from=2-4, to=4-4]
	\arrow["{g_{012}^{\flat}}"', from=4-2, to=4-4]
	\arrow["\pi"', from=4-2, to=5-1]
	\arrow["\pi", from=4-4, to=5-5]
	\arrow["{g_{012}}"', from=5-1, to=5-5]
\end{tikzcd}\]

\item Finally, it remains to show that the maps $s_1 \nabla s_2$ and $\phi_{012}^{\flat}$ are compatible with the (deterministic) maps $f_0^{\sharp}$ and $f_1^{\sharp} \otimes f_2^{\sharp}$, i.e. that the following square commutes:

\[\begin{tikzcd}
	{{T \otimes I_0}} && {{S_1 \otimes S_2 \otimes I_1 \otimes I_2}} \\
	\\
	{\widetilde{T}} && {\widetilde{S}_1 \otimes \widetilde{S}_2}
	\arrow["{s_1 \nabla s_2}", from=1-1, to=1-3]
	\arrow["{f_0^{\sharp}}"', from=1-1, to=3-1]
	\arrow["{\sigma; f_1^{\sharp} \otimes f_2^{\sharp}}", from=1-3, to=3-3]
	\arrow["{\phi_{012}^{\flat}}"', from=3-1, to=3-3]
\end{tikzcd}\]

This holds essentially by definition of the map $\phi_{012}^{\flat}: \widetilde{T} \rightarrow \widetilde{S}_1 \otimes \widetilde{S}_2$. Said definition corresponds to commutativity of the bottom polygon in the commutative diagram below: 
\[\begin{tikzcd}
	{{T \otimes I_0}} &&& {{S_1 \otimes S_2 \otimes I_1 \otimes I_2}} \\
	& {T \otimes I_0} & {{S_1 \otimes S_2 \otimes I_1 \otimes I_2}} \\
	& T \\
	{\widetilde{T}} &&& {\widetilde{S}_1 \otimes \widetilde{S}_2}
	\arrow["{s_1 \nabla s_2}", from=1-1, to=1-4]
	\arrow["id"', from=1-1, to=2-2]
	\arrow["{f_0^{\sharp}}"', from=1-1, to=4-1]
	\arrow["{\sigma; f_1^{\sharp} \otimes f_2^{\sharp}}"{description}, from=1-4, to=4-4]
	\arrow["{s_1 \nabla s_2}", from=2-2, to=2-3]
	\arrow["id"{description}, from=2-3, to=1-4]
	\arrow["{\sigma; f_1^{\sharp} \otimes f_2^{\sharp}}", from=2-3, to=4-4]
	\arrow["\sim", from=3-2, to=2-2]
	\arrow["{r_0}", from=4-1, to=3-2]
	\arrow["{\phi_{012}^{\flat}}"', from=4-1, to=4-4]
\end{tikzcd}\]

The leftmost polygon in this diagram commutes thanks to the hypothesis that $f_0^{\sharp}: T \otimes I_0 \rightarrow \widetilde{T}$ is an isomorphism. \end{itemize} Thus, we have checked all the properties required for the maps $\phi_{012}$, $\phi_{012}^{\flat}$ and $s_1 \nabla s_2$ to define an $xy$-square.
\end{proof}
\begin{claim}
	The projections of this square are the original squares $s_1$ and $s_2$. In other words, we have $(s_1 \nabla s_2) | \pi_i = s_i$, for $i= 1, 2$.
\end{claim}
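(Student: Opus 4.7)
The plan is to unpack the $x$-composition $(s_1 \nabla s_2) |_x \pi_i$ coordinate-by-coordinate and show that each boundary $x$/$y$-morphism and the Markov-square datum coincide with those of $s_i$ by marginalization properties of conditional products. Since $\pi_i$ carries only deterministic data (it is built from the strict projections of the tensor product), the copy-composition used in the $x$-composition formula collapses to a plain projection: for the bottom $x$-morphism, copy-composing $(g_{012}, g_{012}^{\flat})$ with the projection returns $(g_{0i}, g_{0i}^{\flat})$, which holds by hypothesis on $g_{012}$; for the $y$-morphism side, $(f_1 \otimes f_2, f_1^{\sharp} \otimes f_2^{\sharp})$ precomposed with the projection onto the $i^{\text{th}}$ factor is $(f_i, f_i^{\sharp})$ by functoriality of $\otimes$.

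The core of the argument lies in identifying the Markov-square datum. First, by definition of $t$ as a conditional product over $I_1 \otimes I_2 \otimes O_1 \otimes O_2$, its marginal on $S_i \otimes I_1 \otimes I_2 \otimes O_1 \otimes O_2$ equals $t_i$. Second, by definition of $t_i$ as a conditional product over $I_i \otimes O_i$, its marginal on $S_i \otimes I_i \otimes O_i$ is $(s_i; copy_{S_i}; f_i)$, whose further marginal on $S_i \otimes I_i$ is exactly the Markov-square datum of $s_i$. The required equality $(s_1 \nabla s_2) |_x \pi_i = s_i$ at the level of Markov morphisms then follows by computing the $x$-composition formula explicitly and applying these two marginalization identities in sequence.

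For the top $x$-morphism $(\phi_{012}, \phi_{012}^{\flat}) ;_x \pi_i$, I would argue that composing $\phi_{012}$ with the $i^{\text{th}}$ projection gives back $\phi_{0i}$: by definition $\phi_{012}$ factors through $s_1 \nabla s_2$ followed by $\pi_{S_1 \otimes S_2}$, and the analogous projection applied to $s_1 \nabla s_2$ recovers (by the marginalization above) the map $s_i$ projected onto $S_i$, which is precisely $\phi_{0i}$ by the equations for $s_i$ being an $xy$-square. The analogous computation for $\phi_{012}^{\flat}$ uses in addition that $f_0^{\sharp}$ is an isomorphism and that $f_i^{\sharp}$ relates the $\widetilde{S}_i$-component to the $S_i \otimes I_i$-component in $s_i$.

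The main obstacle I expect will be bookkeeping: the $x$-composition of $xy$-squares in $ArenaSys^{\mathrm{Moore}}_{\CC}$ combines standard composition (in the top row, within $F^{-1}(0)$) with copy-composition (in the bottom row, within $F^{-1}(1)$), so one must carefully unfold the composite formula and verify that the intermediate $copy$/$\sigma$/$del$ rearrangements simplify correctly once one of the factors is the deterministic projection square $\pi_i$. Apart from this, the verification reduces to the marginal property of conditional products together with Lemma \ref{lemma_CP_and_regen} applied to the deterministic maps $f_i, f_i^{\sharp}, g_{012}^{\flat}$ to rewrite marginals into the desired form.
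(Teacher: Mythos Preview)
Your proposal is correct and follows essentially the same approach as the paper: both arguments reduce to the hypothesis on $(g_{012},g_{012}^{\flat})$ for the bottom $x$-morphism, marginalization of the iterated conditional products $t \rightsquigarrow t_i \rightsquigarrow s_i$ for the Markov datum, and the definitions of $\phi_{012}$, $\phi_{012}^{\flat}$ together with the isomorphism assumption on $f_0^{\sharp}$ for the top $x$-morphism.

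Two small remarks. First, your route for $\phi_{012};\pi_i = \phi_{0i}$ is actually slightly more direct than the paper's: the paper threads through a larger diagram using the isomorphism $f_0^{\sharp}$, whereas the identity $s_i;\pi_{S_i} = \pi_T;\phi_{0i}$ that you implicitly invoke is already derivable from the $xy$-square axiom $(s_i;\pi_{S_i I_i};f_i^{\sharp}) = (f_0^{\sharp};\phi_{0i}^{\flat})$ by post-composing with $r_i$ and using $f_i^{\sharp};r_i = \pi_{S_i}$ and $\phi_{0i}^{\flat};r_i = r_0;\phi_{0i}$. Second, your closing sentence is off: Lemma~\ref{lemma_CP_and_regen} is not needed for this claim (the paper does not use it here either), and $f_i^{\sharp}$ and $g_{012}^{\flat}$ are in general \emph{not} deterministic---the update map of a system in $F^{-1}(0\to 1)$ is an arbitrary Markov morphism, and $x$-morphisms in $Arena^{\mathrm{Moore}}_{\CC}$ are nondeterministic charts. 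This does no harm since the lemma is extraneous, but you should drop that sentence.
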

\begin{proof}
	\begin{itemize}
		\item First note that, by hypothesis on the $x$-morphism $(g_{012}, g_{012}^{\flat}): {\begin{pmatrix} I_0 \\ O_0  \\ \end{pmatrix}} \rightrightarrows {\begin{pmatrix} I_1 \otimes I_2 \\ O_1 \otimes O_2  \\ \end{pmatrix}}$, the composites with the projection $x$-morphisms are equal to the $x$-morphisms $(g_{0i}, g_{0i}^{\flat}): {\begin{pmatrix} I_0 \\ O_0  \\ \end{pmatrix}} \rightrightarrows {\begin{pmatrix} I_i \\ O_i  \\ \end{pmatrix}}$, for $i= 1,2$.
	
\item Then, for $i=1,2$, the composite $(s_1 \nabla s_2 ; \pi_{S_i \otimes I_i}):  T \otimes I_0$ is equal to $s_i$, because the conditional product $t:  T \otimes I_0 \rightarrow   S_1 \otimes S_2 \otimes I_1 \otimes I_2 \otimes O_1 \otimes O_2$ marginalizes to the map $t_i:  T \otimes I_0 \rightarrow S_i \otimes I_1 \otimes I_2 \otimes  O_1 \otimes O_2$, which in turn marginalizes to $s_i$.

\item	Now, let us show that the composite $(\phi_{012} ; \pi_i): T \rightarrow S_i$ equals $\phi_{0i}$, for $i= 1,2$. It suffices to check that the following diagram commutes:

\[\begin{tikzcd}
	T &&&& {S_1 \otimes S_2} \\
	& {T \otimes I_0} && {S_1 \otimes S_2 \otimes I_1 \otimes I_2} \\
	&&& {S_i \otimes I_i \otimes O_i} \\
	& {\widetilde{T}} && {S_i \otimes I_i} \\
	\\
	&&& {\widetilde{S}_i} \\
	T &&&& {S_i}
	\arrow["{\phi_{012}}", from=1-1, to=1-5]
	\arrow["\sim", from=1-1, to=2-2]
	\arrow["{id_T}"{description}, from=1-1, to=7-1]
	\arrow["{\pi_i}", from=1-5, to=7-5]
	\arrow["{s_1 \nabla s_2}", from=2-2, to=2-4]
	\arrow["{t_i}"{description}, from=2-2, to=3-4]
	\arrow["{f_0^{\sharp}}"{description}, from=2-2, to=4-2]
	\arrow["{s_i}"{description}, from=2-2, to=4-4]
	\arrow["\pi", from=2-4, to=1-5]
	\arrow["\pi", from=2-4, to=3-4]
	\arrow["\pi", from=3-4, to=4-4]
	\arrow["{\phi_{0i}^{\flat}}", from=4-2, to=6-4]
	\arrow["{r_0}", from=4-2, to=7-1]
	\arrow["{f_i^{\sharp}}"{description}, from=4-4, to=6-4]
	\arrow["\pi"{description}, from=4-4, to=7-5]
	\arrow["{r_i}", from=6-4, to=7-5]
	\arrow["{\phi_{0i}}", from=7-1, to=7-5]
\end{tikzcd}\]

	The leftmost part of the diagram commutes because $f_0^{\sharp}:  T \otimes I_0 \rightarrow \widetilde{T}$ was assumed to be an isomorphism, and it is also a section of the composite $\widetilde{T} \xrightarrow{r_0} T \xrightarrow{\sim}  T \otimes I_0$.
	

\item	To conclude, let us show that $\phi_{012}^{\flat}; \pi_i = \phi_{0i}^{\flat}$, as morphisms $\widetilde{T} \rightarrow \widetilde{S}_i$, for $i=1,2$. We claim that the following diagram commutes:

\[\begin{tikzcd}
	{\widetilde{T}} &&&&& {\widetilde{S}_1 \otimes \widetilde{S}_2} \\
	& T \\
	&& {T \otimes I_0} && {S_1 \otimes S_2 \otimes I_1 \otimes I_2} \\
	&&&& {S_i \otimes I_i \otimes O_i} \\
	&&&& {S_i \otimes I_i} \\
	\\
	\\
	{\widetilde{T}} &&&&& {\widetilde{S}_i}
	\arrow["{\phi_{012}^{\flat}}", from=1-1, to=1-6]
	\arrow["{r_0}", from=1-1, to=2-2]
	\arrow["{id_{\widetilde{T}}}"', from=1-1, to=8-1]
	\arrow["{\pi_i}", from=1-6, to=8-6]
	\arrow["\sim", from=2-2, to=3-3]
	\arrow["{s_1 \nabla s_2}", from=3-3, to=3-5]
	\arrow["{t_i}"{description}, from=3-3, to=4-5]
	\arrow["{s_i}"{description}, from=3-3, to=5-5]
	\arrow["{\sigma;(f_1^{\sharp}\otimes f_2^{\sharp})}"{description}, from=3-5, to=1-6]
	\arrow["\pi", from=3-5, to=4-5]
	\arrow["\pi", from=4-5, to=5-5]
	\arrow["{f_i^{\sharp}}", from=5-5, to=8-6]
	\arrow["{\phi_{0i}^{\flat}}"', from=8-1, to=8-6]
\end{tikzcd}\]

	Indeed, the top polygon commutes by definition of $\phi_{012}^{\flat}$. The bottom-left one commutes if precomposed with the map $f_0^{\sharp}:  T \otimes I_0 \rightarrow \widetilde{T}$, by assumption on $\phi_{0i}^{\flat}$; since $f_0^{\sharp}$ was assumed to be an isomorphism, this implies that the bottom-left polygon commutes.  The triangles involving $s_1 \nabla s_2$, $t_i$ and $s_i$ commute by marginalization of conditional products. Finally, the rightmost polygon commutes because of basic properties of the projections $\pi$.
\end{itemize}

This concludes the proof. \end{proof}

\begin{remark}
	The simplest example of $y$-morphism $({\begin{pmatrix} \widetilde{T} \\ T  \\ \end{pmatrix}} \leftrightarrows {\begin{pmatrix} I_0 \\ O_0  \\ \end{pmatrix}})$ that satisfies the assumptions is that where $I_0 = O_0 = \widetilde{T} = T = 1$, the monoidal unit of $\CC$, and all maps are the unique (deterministic) automorphism of $1$. In that case, the operation $\nabla$ can be used to define \emph{trajectories of tensor products of systems}.
\end{remark}

\end{document}